\def\dive{\operatorname{div}}
\numberwithin{equation}{section}
\newtheorem{theorem}{Theorem}[section]
\newtheorem{lemma}[theorem]{Lemma}
\newtheorem{definition}[theorem]{Definition}
\newtheorem{proposition}[theorem]{Proposition}
\newtheorem{remark}[theorem]{Remark}
\newtheorem{corollary}[theorem]{Corollary}
\begin{document}
\title[\hfil Regularity for quasi-linear parabolic equations$\dots$] {Regularity for quasi-linear parabolic equations with nonhomogeneous degeneracy or singularity}
\author[Y. Fang, C. Zhang]{Yuzhou Fang and Chao Zhang$^*$}

\thanks{$^*$Corresponding author.}

\address{Yuzhou Fang\hfill\break School of Mathematics, Harbin Institute of Technology, Harbin 150001, China}
 \email{18b912036@hit.edu.cn}

\address{Chao Zhang\hfill\break School of Mathematics and Institute for Advanced Study in Mathematics, Harbin Institute of Technology, Harbin 150001, China}
 \email{czhangmath@hit.edu.cn}

\subjclass[2010]{35B65, 35K65, 35D40, 35K92, 35K67}   \keywords {Quasi-linear parabolic equation; normalized $p$-Laplacian; nonhomogeneous degeneracy or/and singularity; viscosity solution; comparison principle; regularity.}

\maketitle

\begin{abstract}
We introduce a new class of quasi-linear parabolic equations involving nonhomogeneous degeneracy or/and singularity
$$
\partial_t u=[|D u|^q+a(x,t)|D u|^s]\left(\Delta u+(p-2)\left\langle D^2 u\frac{D u}{|D u|},\frac{D u}{|D u|}\right\rangle\right),
$$
where $1<p<\infty$, $-1<q\leq s<\infty$ and $a(x,t)\ge 0$. The motivation to investigate this model stems not only from the connections to tug-of-war like stochastic games with noise, but also from the non-standard growth problems of double phase type.  According to different values of $q,s$, such equations include nonhomogeneous degeneracy or singularity, and may involve these two features simultaneously. In particular, when $q=p-2$ and $q<s$, it will encompass the parabolic $p$-Laplacian both in divergence form and in non-divergence form. We aim to explore the from $L^\infty$ to $C^{1,\alpha}$ regularity theory for the aforementioned problem. To be precise, under some proper assumptions, we use geometrical methods to establish the local H\"{o}lder regularity of spatial gradients of viscosity solutions.
\end{abstract}

\section{Introduction}

\label{intro} \noindent

Let $B_r\subset \mathbb{R}^n$ be a ball with radius $r$ centered at the origin and $Q_r=B_r\times(-r^2,0]$. In this paper, we are concerned with the interior regularity for viscosity solutions to the following quasi-linear parabolic equation with nonhomogeneous degeneracy or/and singularity
\begin{equation}
\label{main}
\partial_t u=[|D u|^q+a(x,t)|D u|^s]\Delta_p^N u \quad \textmd{in } Q_1,
\end{equation}
where $1<p<\infty$, $-1<q\leq s<\infty$, $0\leq a(x,t)\in C^1(Q_1)$, and $\Delta_p^N$ denotes the normalized $p$-Laplace operator defined by
$$
\Delta_p^N u=\Delta u+(p-2)\left\langle D^2u\frac{D u}{|D u|},\frac{D u}{|D u|}\right\rangle=\left(\delta_{ij}+(p-2)\frac{u_i u_j}{|D u|^2}\right)u_{ij}.
$$
Here the summation convention is exploited and $D u$ is the gradient of $u$ in the spatial variable $x$. Throughout this paper, $u_i=\partial_{x_i}u$, $u_{ij}=\partial_{x_ix_j}u$, $D_{x,t}u=(\partial_tu,\partial_{x_1}u,\cdots,\partial_{x_n}u)^T$.

Over the last decade, a linkage between the stochastic tug-of-war games and nonlinear equations of $p$-Laplacian type, starting with the pioneering papers of Peres-Schramm-Sheffield-Wilson \cite{PS09} and Peres-Sheffield \cite{PS08}, has received lots of attention. For the parabolic scenario, Manfredi-Parviainen-Rossi \cite{MPR10} proved that the solutions to
\begin{equation}
\label{0-2}
\partial_tu=\Delta_p^N u
\end{equation}
could be derived as the limits of value functions for tug-of-war games with noise if the parameter controlling the size of the possible steps tends to zero. We remark that the normalized $p$-Laplacian can be regarded as the 1-homogeneous version of standard $p$-Laplacian or as a mixture of the Laplacian and normalized infinity Laplacian, $\Delta_\infty^N u=|D u|^{-2}\langle D^2uD u,D u\rangle$. The lower regularity for solutions of the homogeneous or nonhomogeneous elliptic normalized $p$-Laplace equation was obtained in \cite{LPS13,Ruo16} (see \cite{BG13,BG2015,Doe11} for the parabolic analogue). The first contribution on the $C^{1,\alpha}$-regularity for such equations is due to the seminal work of Jin-Silvestre \cite{JS17}, where they established the interior H\"{o}lder gradient estimates of solutions to \eqref{0-2}. This result was extended to the inhomogeneous parabolic normalized $p$-Laplacian in \cite{AP18}. For the inhomogeneous elliptic counterpart
$$
-\Delta_p^Nu=f(x) \quad \textmd{in } \Omega,
$$
Attouchi-Parviainen-Ruosteenoja  \cite{APR17} showed that the solutions are locally $C^{1,\alpha}$ regular under the condition that $f(x)\in L^q$ with $q\leq\infty$ possessing a suitably large lower bound; see also \cite{BM19} for the case that $f(x)\in L(n,1)$, where $L(n,1)$ denotes the standard Lorentz space. In addition, the existence of viscosity solutions to \eqref{0-2} has been obtained in \cite{BG13,BG2015,Doe11} by using approximation techniques that are different from the game-theoretic arguments \cite{MPR10}. Juutinen \cite{Juu14} investigated the asymptotic behavior for \eqref{0-2}. For more results on the stochastic tug-of-war game and the $p$-Laplacian operators, see for instance \cite{KKK14,LM14,MPR12,Rossi11}.

On the other hand, equation \eqref{main} is motivated by the double phase problems as well. We observe that equation \eqref{main} is a new model of quasi-linear parabolic equations featuring a nonhomogeneous degenerate or/and singular term modelled on the double phase integrand
\begin{equation}
\label{0-3}
H(x,t,\xi):=|\xi|^p+a(x,t)|\xi|^q, \quad\quad a(x,t)\geq 0,\quad 1<p\leq q.
\end{equation}
In the elliptic case (i.e., the function $a$ is independent of $t$), from a variational point of view, \eqref{0-3} is closely related to the following energy functional
\begin{equation}
\label{0-4}
u\mapsto\int (|D u|^p+a(x)|D u|^q)\,dx,
\end{equation}
which was originally introduced by Zhikov \cite{Zhi93,Zhi95}  in the context of homogenization and Lavrentiev phenomenon. Such functionals can provide useful models for describing the behaviours of strongly anisotropic materials. More precisely, considering two diverse materials with hardening exponents $p$ and $q$ separately, the modulating coefficient $a(\cdot)$ determines the geometry of the mixture composed of the two, according to whether $x$ belongs to the zero set $\{a(x)=0\}$ or not. These functionals with non-standard growth conditions
$$
u\mapsto\int_\Omega F(x,u,D u)\,dx, \quad \nu|\xi|^p\leq F(x,u,\xi)\leq L(|\xi|^q+1),
$$
have been a surge of interest. In the autonomous case that energy density $F(x,u,D u)\equiv F(D u)$, the regularity theory for minima of such functionals is by now well-known from the prominent works of Marcellini \cite{Mar89,Mar91,Mar96}. The investigation of double phase functional \eqref{0-4} has been continued in a series of nice papers by Colombo, Mingione et al. For instance, the local $C^{1,\alpha}$-regularity for minimizers of functional \eqref{0-4} was obtained in \cite{BCM18,CM15,CM215} under some hypotheses that $a(x)$ is H\"{o}lder continuous and the magnitude of the difference $q-p\geq 0$ is suitably small. Whereafter, the Calder\'{o}n-Zygmund type estimates for the weak solutions to
$$
\dive(|D u|^{p-2}D u+a(x)|D u|^{q-2}D u)=\dive(|F|^{p-2}F+a(x)|F|^{q-2}F)
$$
were proved in \cite{BO17,CM16,DeFM}. See also \cite{DeFM202} for the manifold constrained problem, \cite{CDeF20} for the obstacle problem, \cite{FZ20} for the equivalence of weak and viscosity solutions and \cite{CZ20} for the potential theory. More results can be found in \cite{BBO20,CS16,FVZZ20,LD18} and reference therein. Very recently, De Filippis \cite{DeF20} established the quantitative gradient bounds for weak solutions to the following parabolic double phase equations
$$
\partial_tu-\dive(|D u|^{p-2}D u+a(x,t)|D u|^{q-2}D u)=0.
$$
The Harnack's type inequality for this equation  was also derived in \cite{BS20}. However, as far as we know, the relevant regularity results regarding the parabolic double phase models are vary rare in the literature.

Influenced by the aforementioned works, we in the present paper introduce a new model \eqref{main} by combining the normalized $p$-Laplacian operator with the double phase gradient-diffusion. It is worthwhile mentioning that problem \eqref{main} exhibits some novel and intriguing characteristics. In the cases that $0<q\leq s$ and $-1<q\leq s<0$, \eqref{main} possesses the nonhomogeneous degeneracy and singularity, respectively. Furthermore, \eqref{main} has both singularity and degeneracy when $-1<q<0<s$. In particular, equation \eqref{main} incorporates $p$-Laplacian in divergence form together with $p$-Laplacian in non-divergence form simultaneously provided that $q=p-2$ and $s>q$, that is,
$$
\partial_tu-\dive(|D u|^{p-2}D u)-a(x,t)|D u|^s\Delta_p^N u=0.
$$
Meanwhile, equation \eqref{main} is a natural extension of canonical quasi-linear parabolic equations with singularity or degeneracy, whose highly celebrated prototype is
\begin{equation}
\label{0-5}
\partial_tu-|D u|^q\Delta_p^Nu=0.
\end{equation}
Imbert-Jin-Silvestre \cite{IJS19} showed the interior $C^{1,\alpha}$-regularity of viscosity solutions $u$ to \eqref{0-5} in $Q_1$, which states that
$$
\|D u\|_{C^\alpha(Q_{1/2})}\leq C
$$
and
$$
\sup_{\stackrel{(x,t),(x,s)\in Q_{1/2}}{t \neq s}}\frac{|u(x,t)-u(x,s)|}{|t-s|^\frac{1+\alpha}{2-\alpha q}}\leq C.
$$
Later, for the nonhomogeneous analogue,
\begin{equation*}
\partial_tu-|D u|^q\Delta_p^Nu=f(x,t),
\end{equation*}
the local $C^{1,\alpha}$-regularity of solutions was completed under the assumption that $f$ is continuous and bounded; see \cite{Att20} for the degenerate case $q\geq 0$ and \cite{AR20} for the singular case $-1<q<0$. Additionally, several extra aspects of such equations have  already been explored as well, such as existence and uniqueness of solutions \cite{CGG91,Dem11}, the comparison principles \cite{GGIS91,OS97}, Aleksandrov-Bakelman-Pucci type estimate \cite{ACP11}, parabolic Harnack's inequality \cite{PV}. For the related regularity results in the elliptic context, we refer to \cite{AR18,BD10,IS13} and the references therein.

In this work we make use of a unified geometrical method developed by Jin-Silvestre \cite{JS17} and Imbert-Jin-Silvestre\cite{IJS19} to study the interior H\"{o}lder continuity for the spatial gradient of solutions to \eqref{main}. The strategies of the proof concentrate mainly on verifying that the oscillation of gradient is declining in a shrinking sequence of parabolic cylinders, and then reducing the iterative step to a dichotomy between two cases: either the gradient $D u$ stays close to a fixed unit vector $e$ for most points $(x,t)$ (in measure), or it does not, and at last patching these two alternatives together. We shall first prove the $C^{1,\alpha}$-regularity for solutions to \eqref{main} with the strong restriction that $\|D_{x,t}a(x,t)\|_{L^\infty(Q_1)}$ is small (less than 1). Then, by employing a scaling technique, we infer the $C^{1,\alpha}$ estimates for solutions of \eqref{main} under the assumption that $a(x,t)\in C^1(Q_1)$.  Due to the presence of coefficient $a(x,t)$ and the fact that the nonhomogeneous $(q,s)$-growth gradient-diffusion terms are intertwined  in equation \eqref{main}, the theoretical analysis in the current study is radically much more challenging than the previous ones. The significant distinctions and difficulties are as follows.  First, in order to obtain the improvement of oscillation for $|D u|$, it is indispensable to incorporate more terms involving $D a(x,t)$ when we differentiate the regularized equation \eqref{2-1} in $x$-variable. Therefore, in comparison to the proof of Lemma 4.1 in \cite{IJS19}, we need additional elaborate analyses and construct a  much more complicated auxiliary function. Second, the comparison principle (Proposition \ref{pro2-4}) cannot plainly follow from the known results due to the presence of $a(x,t)$. We have to meticulously apply the information from the maximum principle for semicontinuous functions, together with properties such as the local Lipschitz continuity with respect to the matrix square root. Moreover, the Lipschitz regularity of viscosity solutions to \eqref{main} plays a rather crucial role in the proof. To the best of our knowledge, the comparison principle is new, which is also of independent interest. Finally, in order to establish the Lipschitz estimates for equation \eqref{2-1} in the spatial variable, we shall employ Ishii-Lions' method twice: we first use it to deduce the solutions are H\"{o}lder continuity in $x$-variable, and then we rely on this H\"{o}lder regularity and use the Ishii-Lions' method again to show the Lipschitz estimates.


The paper is organized as follows. In Section \ref{sec1}, we first recall the definition of viscosity solutions to \eqref{main} and then give the main result derived in this paper. Section \ref{sec2} contains the Lipschitz continuity in the spatial variables, the H\"{o}lder continuity in the time variable as well as two important properties of viscosity solutions. Section \ref{sec3} is devoted to establishing the H\"{o}lder estimates on the spatial gradients of solutions, which is the most technically challenging part. We complete the proof of comparison principle (Proposition \ref{pro2-4}) in Section \ref{sec4}. The technical proof of the Lipschitz continuity in the spatial variables (Lemma \ref{lem2-1}) is postponed to Section \ref{sec5}. At last, we in Section \ref{sec6} present the proof of the boundary estimates, Proposition \ref{pro3-7}.

\section{Main result}
\label{sec1}

The aim of this paper is to establish the interior H\"{o}lder estimates for spatial gradients of solutions to problem \eqref{main}. To this end, the following hypotheses will be in force. We first assume that
\begin{equation}
\label{1-1}
1<p<\infty.
\end{equation}
Furthermore, concerning the nonhomogeneous degeneracy or/and singularity term appearing in \eqref{main}, we shall require that the exponents $q, s$ fulfill
\begin{equation}
\label{1-2}
-1<q\leq s<\infty,
\end{equation}
and that the modulating coefficient $a(\cdot)$ is such that
\begin{equation}
\label{1-3}
0<a^-:=\inf_{Q_1}a(x,t)\leq a(x,t)\leq a^+:=\sup_{Q_1}a(x,t)<\infty,
\end{equation}
and
\begin{equation}
\label{1-4}
a(x,t)\in C^1(Q_1) \quad \text{and} \quad  A:=\|D_{x,t}a(x,t)\|_{L^\infty(Q_1)}<\infty.
\end{equation}
Throughout this article, the assumptions \eqref{1-1} and \eqref{1-2}  are always supposed to hold.


In the degenerate case (i.e., $q\geq0$), the definition of viscosity solutions is straightforward. Nonetheless, the formulations of defining viscosity solutions cannot be displayed specifically for the case when singularity occurs (i.e., $-1<q<0$). Hence we here adopt the same notion of viscosity solutions as the one utilized in \cite{JLM01} to provide a unified way of defining solutions in the degenerate and singular conditions. Next let us recall the definition of viscosity solutions to \eqref{main}.

\begin{definition}[viscosity solution]
\label{def1}
A finite almost everywhere and lower semicontinuous function $u:Q_1\rightarrow \mathbb{R}\cup\{+\infty\}$ is a viscosity supersolution to \eqref{main} in $Q_1$, if whenever $(x_0,t_0)\in Q_1$ and $\varphi\in C^2(Q_1)$ are such that $u-\varphi$ attains a local minimum at $(x_0,t_0)$ and moreover $D\varphi(x,t)\neq0$ for $x\neq x_0$, then we obtain
\begin{equation*}
\limsup_{\stackrel{(x,t)\rightarrow(x_{0},t_{0})}{x\neq x_{0}}}\left(\partial_t \varphi(x,t)-[|D\varphi(x,t)|^q+a(x,t)|D\varphi(x,t)|^s]\Delta_p^N\varphi(x,t)\right)\geq0.
\end{equation*}
A finite almost everywhere and upper semicontinuous function $u:Q_1\rightarrow \mathbb{R}\cup\{-\infty\}$ is a viscosity subsolution to \eqref{main} in $Q_1$, if whenever $(x_0,t_0)\in Q_1$ and $\varphi\in C^2(Q_1)$ are such that $u-\varphi$ reaches a local maximum at $(x_0,t_0)$ and moreover $D\varphi(x,t)\neq0$ for $x\neq x_0$, then we derive
\begin{equation*}
\liminf_{\stackrel{(x,t)\rightarrow(x_{0},t_{0})}{x\neq x_{0}}}\left(\partial_t \varphi(x,t)-[|D\varphi(x,t)|^q+a(x,t)|D\varphi(x,t)|^s]\Delta_p^N\varphi(x,t)\right)\leq0.
\end{equation*}

A function $u$ is called a viscosity solution to \eqref{main} if and only if it is both viscosity super- and subsolution.
\end{definition}

\begin{remark}
When $D\varphi(x_0,t_0)\neq0$, these limits above are explicit,
$$
\partial_t \varphi(x_0,t_0)-[|D\varphi(x_0,t_0)|^q+a(x_0,t_0)|D\varphi(x_0,t_0)|^s]\Delta_p^N\varphi(x_0,t_0)\geq(\leq)0.
$$
In addition, when $0\leq q\leq s$, the condition that $D\varphi(x,t)\neq0$ for $x\neq x_0$ can be removed actually. For example, if $D\varphi(x_0,t_0)=0$, after careful computations, the supremum limit turns into
\begin{itemize}
\item  [(i)] the case that $q=s=0$,
\begin{equation*}
\begin{cases}
\partial_t\varphi(x_0,t_0)-(1+a(x_0,t_0))\left(\mathrm{tr}(D^2\varphi(x_0,t_0))+(p-2)\lambda_{\rm min}(D^2\varphi(x_0,t_0))\right)\geq0,  & \text{\textmd{}} p\geq2, \\[2mm]
\partial_t\varphi(x_0,t_0)-(1+a(x_0,t_0))\left(\mathrm{tr}(D^2\varphi(x_0,t_0))+(p-2)\lambda_{\rm max}(D^2\varphi(x_0,t_0))\right)\geq0,  & \text{\textmd{}} p\in(1,2).
\end{cases}
\end{equation*}

 \item  [(ii)]  the case that $0=q<s$,
\begin{equation*}
\begin{cases}
\partial_t\varphi(x_0,t_0)-\left(\mathrm{tr}(D^2\varphi(x_0,t_0))+(p-2)\lambda_{\rm min}(D^2\varphi(x_0,t_0))\right)\geq0,  & \text{\textmd{when}}\quad p\geq2, \\[2mm]
\partial_t\varphi(x_0,t_0)-\left(\mathrm{tr}(D^2\varphi(x_0,t_0))+(p-2)\lambda_{\rm max}(D^2\varphi(x_0,t_0))\right)\geq0,  & \text{\textmd{when}}\quad p\in(1,2).
\end{cases}
\end{equation*}

\item  [(iii)]  the case that $0<q\leq s$,
$$
\partial_t \varphi(x_0,t_0)\geq0.
$$
\end{itemize}
Here $\mathrm{tr}(N)$ is the trace of matrix $N$, and $\lambda_{\rm min}(N)$ ($\lambda_{\rm max}(N)$) denotes the minimum (maximum) eigenvalue of $N$.

The infimal limit in definition can be tackled analogously.
\end{remark}

Now we are in position to state our main contribution of this work.

\begin{theorem}
\label{thm0}
Let the conditions \eqref{1-1}--\eqref{1-4} be in force. Suppose that $u$ is a bounded viscosity solution to equation \eqref{main} in $Q_1$. Then there are two constants $\alpha\in(0,1)$ and $C>0$, both depending upon $n,p,q,s,a^-,a^+,A$ and $\|u\|_{L^\infty(Q_1)}$, such that the following estimates hold
$$
\|D u\|_{C^\alpha(Q_{1/2})}\leq C
$$
and
$$
\sup_{\stackrel{(x,t),(x,s)\in Q_{1/2}}{t \neq s}}\frac{|u(x,t)-u(x,s)|}{|t-s|^\frac{1+\alpha}{2-\alpha q}}\leq C.
$$
\end{theorem}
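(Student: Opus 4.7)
The plan is to follow the tangential geometric method of Jin-Silvestre \cite{JS17} and Imbert-Jin-Silvestre \cite{IJS19}, adapted to the coupled $(q,s)$-structure with a variable coefficient. First, by a rescaling of the form $\tilde u(x,t) = \kappa\,u(\rho x, \rho^{2}\kappa^{-q}t)$ one checks that $\tilde u$ solves an equation of the same type as \eqref{main} with a coefficient $\tilde a$ whose space-time gradient is scaled down by a positive power of $\rho$; thus it suffices to prove the theorem under the smallness assumption $A<1$. Once this reduction is in place, I would regularize by replacing $|Du|$ with $(|Du|^{2}+\varepsilon^{2})^{1/2}$ in the diffusion coefficient to produce the non-degenerate equation and classical solutions $u_\varepsilon$ (existence and uniqueness guaranteed by Proposition \ref{pro2-4}). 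The preliminary bounds of Section \ref{sec2}, in particular the spatial Lipschitz bound of Lemma \ref{lem2-1}, give $\|Du_\varepsilon\|_{L^\infty(Q_{3/4})}\le L$ uniformly in $\varepsilon$, so it remains to prove an $\varepsilon$-independent $C^\alpha$ estimate on $Du_\varepsilon$ and pass to the limit via viscosity stability.

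The heart of the argument is an improvement-of-oscillation statement: there are $\lambda,\tau\in(0,1)$ such that whenever $\mathrm{osc}_{Q_1}Du_\varepsilon\le 1$, one has $\mathrm{osc}_{Q_\lambda}Du_\varepsilon\le 1-\tau$, measured in an intrinsic parabolic cylinder whose time length is scaled to match the homogeneity of $|Du|^q$. I would prove this by a dichotomy after fixing a unit vector $e$ and a small $\mu>0$. In the \emph{non-degenerate alternative}, the set $\{(x,t)\in Q_{1/2}:\partial_{e}u_\varepsilon>1-\mu\}$ occupies most of $Q_{1/2}$, so $|Du_\varepsilon|$ is bounded away from $0$ on it; differentiating the regularized equation in the direction $e$ yields a linear uniformly parabolic equation for $w=\partial_{e}u_\varepsilon$, and Krylov-Safonov theory provides Hölder continuity of $w$, hence the oscillation decay. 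In the \emph{degenerate alternative}, the good set is small for every unit $e$; combined with the boundary estimate Proposition \ref{pro3-7} and a measure-theoretic covering, this forces $|Du_\varepsilon|$ itself to shrink in a smaller intrinsic cylinder. Patching the two cases and iterating at dyadic intrinsic scales (time radius of order $r^{\,2-\alpha q}$) produces the Hölder modulus of $Du_\varepsilon$.

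The main obstacle lies in the non-degenerate alternative. Differentiating the regularized equation in $e$ now produces additional first-order terms of the form $(\partial_{e}a)(|Du_\varepsilon|^{2}+\varepsilon^{2})^{s/2}\Delta_p^N u_\varepsilon$ that do not appear in \cite{IJS19}. Controlling these cannot be done with the usual test function: one has to construct a more elaborate auxiliary function that simultaneously exploits the smallness of $A$ secured by the initial rescaling, the quantitative non-degeneracy of $|Du_\varepsilon|$ on the good set, and the uniform Lipschitz bound on $u_\varepsilon$, just as anticipated in the introduction. Once the spatial $C^\alpha$ estimate for $Du$ is in hand, the time Hölder bound with exponent $(1+\alpha)/(2-\alpha q)$ follows by inserting the test function $\varphi(y,s)=u(x,t)+Du(x,t)\cdot(y-x)\pm M|s-t|^{(1+\alpha)/(2-\alpha q)}$ into the viscosity inequality at $(x,t)$ and tuning $M$ according to the same intrinsic scaling that governed the iteration, which directly reproduces the exponent in the statement.
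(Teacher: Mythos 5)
Your high-level scaffold (rescale to make $A$ small, regularize, get a uniform Lipschitz bound, iterate an oscillation improvement via a dichotomy, then pass to the limit) matches the paper. But the two alternatives of the dichotomy are assigned the wrong mechanisms, and as written the non-degenerate alternative does not work.

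In the paper's Lemma \ref{lem3-1}, the differentiation step occurs in what you call the \emph{degenerate} alternative, i.e.\ when $|\{Du\cdot e\le l\}|>\mu|Q_1|$. Moreover the differentiated quantity is not $\partial_e u$ but the auxiliary function $w=(Du\cdot e-l+\rho|Du|^2)_+$: the $\rho|Du|^2$ correction produces the coercive term $-2\rho\,a_{ij}(x,t,Du)u_{ki}u_{kj}$ needed to absorb the quadratic error $a_{ij,m}(x,t,Du)\,u_{ij}\,w_m$, and the cutoff $l$ guarantees $|Du|>l/2$ on $\{w>0\}$, which is what makes the diffusion there uniformly parabolic. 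One then applies the measure-to-pointwise (De Giorgi/Krylov--Safonov) estimate to an exponential transform $U$ of $w$, obtaining $Du\cdot e<1-\delta$ \emph{pointwise} in a smaller intrinsic cylinder; running this over all $e$ (Corollary \ref{cor3-2}) gives the oscillation decay. Conversely, the \emph{non-degenerate} alternative — $Du$ close to a unit vector $e$ on a large set — is handled not by Krylov--Safonov on $\partial_e u$ but by first showing $u$ is close to a linear function (Lemma \ref{lem3-4}, via Morrey plus the time Hölder estimate of Lemma \ref{lem2-2}) and then invoking Wang's small-perturbation $C^{2,\beta}$ regularity (Proposition \ref{pro3-5}). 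Your proposal to differentiate and apply Krylov--Safonov directly to $w=\partial_e u$ in this case does not go through: the linearized equation for $\partial_e u$ carries the first-order coefficient $a_{ij,m}(x,t,Du)\,u_{ij}$, which involves the a priori unbounded Hessian $D^2u$, so the equation is not linear with measurable bounded coefficients and Krylov--Safonov cannot be invoked as stated. You also attribute the shrinking of $|Du|$ in the degenerate alternative to ``boundary estimate Proposition \ref{pro3-7} and a measure-theoretic covering,'' but Proposition \ref{pro3-7} plays no role there — it is used only in the final approximation step (Theorem \ref{thm3-9}) when replacing $u$ by smooth $u_\varepsilon$.

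Two smaller points. Existence of the regularized classical solutions is Lemma \ref{lem3-8}, not Proposition \ref{pro2-4} (the comparison principle). The reduction to $A<1$ is achieved in the paper by the very simple scaling $\hat u(x,t)=\epsilon^{-1}u(\epsilon x,\epsilon^2 t)$, $\hat a(x,t)=a(\epsilon x,\epsilon^2t)$, which leaves $Du$ and the equation unchanged while contracting $D_{x,t}\hat a$ by a factor $\epsilon$; the map $\tilde u(x,t)=\kappa\,u(\rho x,\rho^2\kappa^{-q}t)$ you propose does not preserve the form of \eqref{main} for an arbitrary pair $(\kappa,\rho)$, and the intrinsic time scaling $\rho^{-q}r^2$ is used in the iteration, not in the reduction to small $A$.
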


\section{Lower regularity for solutions}
\label{sec2}

In order to circumvent some technical difficulties created by the lack of smoothness of viscosity solutions to \eqref{main}, we first study the regularized equation below
\begin{equation}
\label{2-1}
\partial_t u=\big[(|D u|^2+\varepsilon^2)^\frac{q}{2}+a(x,t)(|D u|^2+\varepsilon^2)^\frac{s}{2}\big]\left(\delta_{ij}+(p-2)\frac{u_i u_j}{|D u|^2+\varepsilon^2}\right)u_{ij}
\end{equation}
in $Q_1$, where $0<\varepsilon<1$. Then we devote to obtaining uniform estimates with respect to $\varepsilon$ so that we could pass to the limit in the end.

In this section, we are going to show the Lipschitz continuity in the spatial variables and the H\"{o}lder continuity in the time variable. Now we first present the Lipschitz estimates independent of $\varepsilon$ on solutions to equation \eqref{2-1}. However, the proof of this lemma is rather long and delicate, which was postponed to Section \ref{sec5}. It is worth mentioning that, from the proof below, we can easily find that the Lipschitz estimates also hold true for $\varepsilon=0$. We state this result as follows.

\begin{lemma}[Local Lipschitz estimates in $x$-variable]
\label{lem2-1}
Let $\varepsilon\in[0,1)$ and the assumptions \eqref{1-1} and \eqref{1-2} be in force. Let $u$ be a smooth solution to \eqref{2-1} in $Q_1$. Assume that $a(x,t)\geq a^->0$ and $a(x,t)$ is uniformly Lipschitz continuous in $x$-variable, that is, there exists a constant $C_{\rm lip}>0$, independent of $t$-variable, such that $|a(x,t)-a(y,t)|\leq C_{\rm lip}|x-y|$. Then for all $r \in(0,\frac{7}{8}]$, there holds that
\begin{equation*}
|u(x,t)-u(y,t)|\leq C|x-y|
\end{equation*}
for $(x,t), (y,t)\in \overline{Q_r}$, where $C>0$ depends on $n,p,q,s, a^-,C_{\rm lip}$ and $\|u\|_{L^\infty(Q_1)}$.
\end{lemma}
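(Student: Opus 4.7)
The approach is the Ishii-Lions doubling-of-variables technique, applied in two stages as the authors indicate: first to obtain an $x$-Hölder estimate, then to bootstrap to the Lipschitz bound. Since $u$ is a classical solution of the regularized equation \eqref{2-1}, the doubling can be carried out directly at the smooth level, so that one simply compares the PDE at two points rather than invoking the Jensen-Ishii lemma. Fix $(x_0,t_0)\in Q_r$ and, for a concave modulus $\omega$ and constants $L,\sigma>0$ to be chosen, consider
\begin{equation*}
\Phi(x,y,t)=u(x,t)-u(y,t)-L\omega(|x-y|)-\sigma\bigl(|x-x_0|^2+|y-x_0|^2+(t-t_0)^2\bigr).
\end{equation*}
If $\sup\Phi\le 0$ the desired estimate at $x_0$ follows; otherwise the supremum is attained at an interior point $(\bar x,\bar y,\bar t)$ with $\bar x\ne \bar y$, provided $\sigma$ is tuned to repel the maximum from $\partial Q_r$. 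The first-order conditions identify $Du(\bar x,\bar t)$ and $Du(\bar y,\bar t)$ with a common vector $\xi=L\omega'(|\bar x-\bar y|)(\bar x-\bar y)/|\bar x-\bar y|$ up to $O(\sigma)$, while the second-order matrix inequality, tested against $(e,-e)$ with $e=(\bar x-\bar y)/|\bar x-\bar y|$, produces a strictly negative eigenvalue of size $L\omega''(|\bar x-\bar y|)$ for the Hessian combination $X-Y$ (with $X=D^2u(\bar x,\bar t)$, $Y=D^2u(\bar y,\bar t)$).

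Subtracting \eqref{2-1} at the two points and using $\partial_t(u(\bar x,\bar t)-u(\bar y,\bar t))=O(\sigma)$, the difference rearranges into
\begin{equation*}
O(\sigma)=g(|\xi|^2+\varepsilon^2)\,\mathrm{tr}\bigl(A_p(\xi)(X-Y)\bigr)+E_a,
\end{equation*}
where $g(\tau)=\tau^{q/2}+a(\bar x,\bar t)\,\tau^{s/2}$, $A_p(\xi)=I+(p-2)\xi\otimes\xi/(|\xi|^2+\varepsilon^2)$, and
\begin{equation*}
E_a=\bigl(a(\bar x,\bar t)-a(\bar y,\bar t)\bigr)(|\xi|^2+\varepsilon^2)^{s/2}\,\mathrm{tr}\bigl(A_p(\xi)Y\bigr).
\end{equation*}
For \emph{Stage 1} I would pick $\omega(r)=r^\beta$ with $\beta\in(0,1)$ small. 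The Hessian-difference term contributes a negative quantity of order $L\beta(\beta-1)|\bar x-\bar y|^{\beta-2}$, weighted by the coercivity factor $g(|\xi|^2+\varepsilon^2)\ge a^-(|\xi|^2+\varepsilon^2)^{s/2}$, while $E_a$ is controlled by $C_{\rm lip}|\bar x-\bar y|\,(|\xi|^2+\varepsilon^2)^{s/2}\|Y\|$ with $\|Y\|$ bounded through the matrix inequality. For $\beta$ small and $L$ large the negative term dominates, contradicting $\sup\Phi>0$ and giving local $C^\beta$ regularity in $x$.

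For \emph{Stage 2} I would rerun the scheme with the concave modulus $\omega(r)=r-\kappa r^{1+\gamma}$ with $\kappa,\gamma>0$ small, normalized so that $\omega(r)\ge r/2$ on the relevant scales. Now $\omega'(r)$ is of order one, so the bare size of $E_a$ is worse and cannot be absorbed by $|\bar x-\bar y|$ alone. This is where the Stage 1 bound enters: from $\sup\Phi>0$ one has $L|\bar x-\bar y|/2\le L\omega(|\bar x-\bar y|)\le u(\bar x,\bar t)-u(\bar y,\bar t)\le C|\bar x-\bar y|^\beta$, forcing $|\bar x-\bar y|\le (C/L)^{1/(1-\beta)}$. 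With this a priori scale, $E_a$ is absorbable by the sharpened negative term $L\omega''(|\bar x-\bar y|)\sim -L\kappa\gamma(1+\gamma)|\bar x-\bar y|^{\gamma-1}$, producing the final contradiction and the Lipschitz bound, uniformly in $\varepsilon\in[0,1)$.

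The principal obstacle is exactly the remainder $E_a$: because the diffusion coefficient depends on $x$, the PDE difference leaves a term that involves the Hessian of $u$ at a single point, which is not directly controlled by the Hessian-difference information supplied by the doubling. In the pure-power setting of Imbert-Jin-Silvestre \cite{IJS19} this term is absent. Closing the argument here requires both the Lipschitz continuity of $a(\cdot,t)$ and, for the Lipschitz step, the preliminary Hölder control from Stage 1, which is precisely what makes the two-step bootstrap genuinely necessary in the present inhomogeneous setting.
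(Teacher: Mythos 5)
Your overall strategy is the same as the paper's: a two-stage Ishii--Lions doubling, first with a power modulus $r\mapsto r^\gamma$ to get $C^{0,\gamma}_x$, then with a concave modulus $r\mapsto r-\kappa r^\nu$, $\nu\in(1,2)$, to get Lipschitz. You also correctly isolate the new error term $E_a$ coming from the $x$-dependence of $a$. However, two of your key mechanisms do not close the argument.

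First, the claim that one can dispense with the Jensen--Ishii (Crandall--Ishii) lemma because $u$ is smooth overlooks where the bound on $\|Y\|$ (with $Y=D^2u(\bar y,\bar t)$) actually comes from. At an interior maximum of $\Phi$ the smooth second-order condition $D^2\Phi\le 0$ gives, when tested against $(v,-v)$, the negative eigenvalue of $X-Y$, and when tested against $(0,v)$ a \emph{lower} bound $Y\ge -L\omega''\,e\otimes e-\frac{L\omega'}{r}(I-e\otimes e)-2\sigma I$; it gives no upper bound on $Y$. Your estimate of $E_a$ (and, implicitly, of the coefficient-mismatch terms) uses $\|Y\|\lesssim L(|\omega''|+\omega'/r)$, which is exactly the two-sided information produced by the extra quadratic term $\tfrac{1}{\tau}(D^2\Psi)^2$ in Jensen--Ishii (the paper's displays (5.2), (5.4), (5.7), (5.13)). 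So the lemma, or an equivalent regularization, is still required even in the smooth setting.

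Second, and more seriously, you only account for the $a$-mismatch $E_a$, not for the gradient-mismatch error. Since the diffusion coefficients depend on $D u$ and the two test gradients $\eta_1=L\omega' e+2\sigma(\bar x-x_0)$, $\eta_2=L\omega' e-2\sigma(\bar y-y_0)$ differ by $O(\sigma)$, the difference of the two PDEs produces terms of the form $(|\eta_1|^2+\varepsilon^2)^{q/2}\mathrm{tr}\big((A^\varepsilon(\eta_1)-A^\varepsilon(\eta_2))Y\big)$ and $\big[(|\eta_1|^2+\varepsilon^2)^{q/2}-(|\eta_2|^2+\varepsilon^2)^{q/2}\big]\mathrm{tr}(A^\varepsilon(\eta_2)Y)$ (the paper's $J_{1,2}$, $J_{1,3}$). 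These are not $O(\sigma)$: with $\|Y\|\sim L/r$ and $|\eta_1-\eta_2|\lesssim\sigma$, they are of order $\sigma L^q/r$, while the main negative term in Stage 2 is of order $L^{q+1}\kappa_0\,r^{\nu-2}$; their ratio blows up like $r^{-(\nu-1)}$ as $r\to 0$, so taking $r=|\bar x-\bar y|$ small (which is the only thing your use of the Stage 1 Hölder estimate achieves) makes matters \emph{worse}, not better. What the paper actually extracts from the Stage 1 Hölder bound is not a bound on $|\bar x-\bar y|$ alone but a decay of the penalization gradients: combining $\sup\Phi>0$ with $|u(\hat x,\hat t)-u(\hat y,\hat t)|\le K r^\gamma$ yields $M_2|\hat x-x_0|,\,M_2|\hat y-y_0|\le K r^{\gamma/2}$ (their (5.6)), hence $|\eta_1-\eta_2|\lesssim K r^{\gamma/2}$. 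Feeding this into $J_{1,2},J_{1,3}$ produces the factor $r^{\gamma/2-1}$ that matches $|\varphi''|\sim\kappa_0\,r^{\gamma/2-1}$ (recall $\nu=1+\gamma/2$), so the ratio to the main term becomes $K/(M_1\kappa_0)$, killed by taking $M_1$ large. Your write-up misses both the gradient-mismatch contributions and the correct role of the Hölder step, and the argument as stated does not yield a contradiction.
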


\begin{remark}
From the proof of this lemma in Section \ref{sec5}, we can see the explicit dependencies of the above constant $C$ with
$$
C:=C(n,p,q,s)\|u\|_{L^\infty(Q_1)}\left[1+\left(\frac{C_{\rm lip}}{a^-}\right)^2\right].
$$
If $a(x,t)$ is supposed to be of class $C^1(\overline{Q_1})$ in the previous lemma, then the constant $C_{\rm lip}>0$ can be replaced by $A:=\|D_{x,t}a(x,t)\|_{L^\infty(Q_1)}$.
\end{remark}

Based on the Lipschitz estimates above and a simple comparison argument, we can demonstrate that the solutions to \eqref{2-1} are H\"{o}lder continuous in $t$, which will be utilized in Lemma \ref{lem3-4} below.

\begin{lemma}[Local H\"{o}lder estimates in $t$-variable]
\label{lem2-2}
Suppose that $u$ is a smooth solution of \eqref{2-1} in $Q_1$ with $0<\varepsilon<1$. Let $a(x,t)$ satisfy that $|a(x,t)-a(y,t)|\leq C_{\rm lip}|x-y|$ in $Q_1$. Then under the assumptions \eqref{1-1}--\eqref{1-3}, the following estimates hold:

\begin{itemize}
\item[(i)] for $0\leq q\leq s$,
  $$
  \sup_{\stackrel{(x,t),(x,s)\in Q_{3/4}}{t \neq s}}\frac{|u(x,t)-u(x,s)|}{|t-s|^\frac{1}{2}}\leq C;
  $$

\smallskip

\item[(ii)] for $-1<q\leq s<0$ or $-1<q<0\leq s$,
  $$
   \sup_{\stackrel{(x,t),(x,s)\in Q_{3/4}}{t \neq s}}\frac{|u(x,t)-u(x,s)|}{|t-s|^\frac{1}{\beta(1+s)-s}}\leq C,
  $$
  where $\beta=\frac{q+2}{q+1}$ and $C$ depends on $n,p,q,s,a^-,a^+,C_{\rm lip}$ and $\|u\|_{L^\infty(Q_1)}$.
\end{itemize}
\end{lemma}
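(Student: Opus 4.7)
The plan is to derive the Hölder estimate in $t$ from the uniform spatial Lipschitz bound of Lemma \ref{lem2-1} through a barrier/comparison argument built on Proposition \ref{pro2-4}. Fix an interior base point $(x_0,t_0)\in Q_{3/4}$ and a small $\tau>0$; by the symmetry between super- and sub-solution barriers, it suffices to bound $u(x_0,t_0+\tau)-u(x_0,t_0)$ from above. Write $L$ for the Lipschitz constant from Lemma \ref{lem2-1}, and enlarging $L$ if needed, set the lateral radius $\rho=2\|u\|_{L^\infty(Q_1)}/L$ so that the cylinder $B_\rho(x_0)\times[t_0,t_0+\tau]$ sits inside $Q_1$.

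For the degenerate range $0\le q\le s$, the natural candidate is
$$\Phi(x,t)=u(x_0,t_0)+L\sqrt{|x-x_0|^2+\delta^2}+M(t-t_0).$$
Lipschitz continuity gives $\Phi(\cdot,t_0)\ge u(\cdot,t_0)$, while the choice of $\rho$ gives $\Phi\ge u$ on the lateral boundary. A direct computation of the eigenvalues of $D^2\Phi$ bounds the second-order expression $(\delta_{ij}+(p-2)\Phi_i\Phi_j/(|D\Phi|^2+\varepsilon^2))\Phi_{ij}$ by $C(n,p)L/\delta$, while $|D\Phi|\le L$ together with $q,s\ge 0$ keeps the coefficient $(|D\Phi|^2+\varepsilon^2)^{q/2}+a(|D\Phi|^2+\varepsilon^2)^{s/2}$ bounded uniformly in $\varepsilon\in(0,1)$. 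Hence $\Phi$ is a classical supersolution of \eqref{2-1} as soon as $M\ge CL/\delta$, Proposition \ref{pro2-4} yields $u(x_0,t_0+\tau)\le u(x_0,t_0)+L\delta+M\tau$, and optimising $\delta\sim\sqrt{\tau}$ returns the exponent $1/2$.

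The singular range $-1<q<0$ is the main obstacle: the barrier above has $D\Phi(x_0,t)=0$, so $(|D\Phi|^2+\varepsilon^2)^{q/2}$ behaves like $\varepsilon^q$ near the centre and no $\varepsilon$-independent supersolution bound is possible with a radial square-root profile. The plan is to replace the square root by a power-type profile, for example
$$\Phi(x,t)=u(x_0,t_0)+C\bigl[(|x-x_0|^2+h^2)^{\beta/2}-h^\beta\bigr]+M(t-t_0),\qquad \beta=\frac{q+2}{q+1}>1,$$
possibly augmented by a linear $x$-term to prevent the gradient from degenerating, and to retune the parameters $C,h,M$. The Lipschitz envelope condition $\Phi(\cdot,t_0)\ge u(\cdot,t_0)$ forces $C\sim L\,h^{1-\beta}$, and the exponent $\beta=(q+2)/(q+1)$ is exactly the one that renders the contribution of the singular $(|D\Phi|^2+\varepsilon^2)^{q/2}\Theta$ term scale-free in the radial variable, so that the supersolution inequality reduces to a constraint on $M$ and $h$. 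The remaining $a(|D\Phi|^2+\varepsilon^2)^{s/2}\Theta$ term then dictates $M\ge c\,h^{-[\beta(1+s)-s-1]}$, and minimising the resulting right-hand side $Ch+M\tau$ in $h$ places the optimal choice at $h\sim\tau^{1/[\beta(1+s)-s]}$, producing precisely the Hölder exponent $(q+1)/(q+s+2)$ claimed in (ii). Combining with the twin subsolution barriers concludes the proof; the technical heart of the argument, and the main obstacle, is the careful verification that such a power barrier is genuinely a supersolution of the regularised equation \eqref{2-1} with bounds independent of $\varepsilon$.
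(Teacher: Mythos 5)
Your plan shares the paper's overall architecture (barrier plus comparison, optimize a free parameter against the elapsed time, and the same critical exponent $\beta=\frac{q+2}{q+1}$), and your degenerate-case barrier $u(x_0,t_0)+L\sqrt{|x-x_0|^2+\delta^2}+M(t-t_0)$ does work: since $q,s\ge 0$, the coefficient $(|D\Phi|^2+\varepsilon^2)^{q/2}+a(|D\Phi|^2+\varepsilon^2)^{s/2}$ stays bounded uniformly in $\varepsilon$, so $M\sim L/\delta$ suffices and $\delta\sim\sqrt\tau$ gives the exponent $1/2$. The paper uses the profile $\eta+L_2|x|^2$ and reads the equation as uniformly parabolic linear, but either variant closes that case.

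The singular case, however, has a genuine gap. Your smoothed profile $\Phi=C\big[(|x-x_0|^2+h^2)^{\beta/2}-h^\beta\big]+M(t-t_0)$ \emph{cannot} be made an $\varepsilon$-uniform supersolution: near $x_0$ one has $D\Phi\approx C\beta h^{\beta-2}(x-x_0)$ and $\|D^2\Phi\|\approx C\beta h^{\beta-2}$, a nonzero constant, so at $x=x_0$ the second-order term is $(\varepsilon^q+a\,\varepsilon^s)\cdot O(h^{\beta-2})$, which blows up as $\varepsilon\to 0$ because $q<0$. The smoothing that was harmless in the degenerate case is exactly what ruins the singular case. The paper's resolution is the opposite of smoothing: it takes the \emph{unsmoothed} power $L_2|x|^\beta$ with $\beta=\frac{q+2}{q+1}>2$ (you wrote $\beta>1$, but for $-1<q<0$ in fact $\beta>2$). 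Then $\|D^2\varphi\|\le\beta(\beta-1)L_2|x|^{\beta-2}$ vanishes at the center, and $(|D\varphi|^2+\varepsilon^2)^{q/2}\|D^2\varphi\|\le|D\varphi|^q\|D^2\varphi\|\sim|x|^{q(\beta-1)+\beta-2}=|x|^0$, which is bounded uniformly in $\varepsilon$; the envelope condition near $x_0$, which the unsmoothed barrier by itself would fail, is rescued by the additive constant $\eta$ through Young's inequality, with $L_2\ge\eta^{1-\beta}\|Du\|^\beta$. Your proposed ``linear augmentation'' $v\cdot(x-x_0)$ would not fix the smoothed barrier either: whatever helps the envelope in the direction $v$ hurts it in the direction $-v$. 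You yourself flag the supersolution verification as ``the main obstacle'' and leave it open; as written, the smoothed barrier does not satisfy it, so the singular case is not proved.
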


\begin{proof}
Let $\beta\geq2$ be determined later, according to three different scenarios, i.e., $-1<q\leq s<0$, $-1<q<0\leq s$ and $0\leq q\leq s$. For all $t_0\in \left[-\left(\frac{3}{4}\right)^2,0\right)$ and $\eta>0$, we now assert that there are two constants $L_1,L_2>0$ such that
\begin{equation}
\label{2-2}
u(x,t)-u(0,t_0)\leq \eta+L_1(t-t_0)+L_2|x|^\beta=:\varphi(x,t)
\end{equation}
for any $\overline{B_{3/4}}\times[t_0,0]$. We first select $L_2\geq 2(\frac{4}{3})^\beta\|u\|_{L^\infty(Q_1)}$ such that \eqref{2-2} holds for $x\in \partial B_{3/4}$, and in turn take $L_2$ such that \eqref{2-2} holds for $t=t_0$. That is to say, we can choose properly such $L_2>0$ that \eqref{2-2} does hold on the boundary of $\overline{B_{3/4}}\times[t_0,0]$. Indeed, due to $u$ is Lipschitz continuous in the spatial variables, we may take
$$
L_2\geq \frac{\|D u\|^\beta_{L^\infty(Q_{7/8})}}{\eta^{\beta-1}}
$$
to guarantee that
$$
\eta+L_2|x|^\beta\geq\|D u\|_{L^\infty(Q_{7/8})}|x|
$$
by Young's inequality, which implies that \eqref{2-2} is true for $t=t_0$. Here we note that $\|D u\|_{L^\infty(Q_{7/8})}$ is bounded depending on $n,p,q,s,a^-,C_{\rm lip}$ and $\|u\|_{L^\infty(Q_1)}$. In the rest of proof, we fix
$$
L_2=\eta^{1-\beta}\|D u\|^\beta_{L^\infty(Q_{7/8})}+2\left(\frac{4}{3}\right)^\beta\|u\|_{L^\infty(Q_1)}+1.
$$
Next, we are ready to select $L_1$ such that $\varphi(x,t)$ is a supersolution to certain equations. Inequality \eqref{2-2} then follows by the comparison principle. The remaining proof is completed under three diverse cases.

\smallskip

\textbf{Case 1.} $-1<q\leq s<0$. We shall show that $\varphi(x,t)$ is a supersolution to equation \eqref{2-1}, that is,
\begin{equation}
\label{2-3}
\partial_t\varphi-\big[(|D\varphi|^2+\varepsilon^2)^\frac{q}{2}+a(x,t)(|D\varphi|^2+\varepsilon^2)^\frac{s}{2}\big]
\left(\delta_{ij}+(p-2)\frac{\varphi_i\varphi_j}{|D\varphi|^2+\varepsilon^2}\right)\varphi_{ij}\geq0.
\end{equation}
We first calculate
$$
D\varphi=\beta L_2|x|^{\beta-2}x,
$$
$$
D^2\varphi=\beta L_2|x|^{\beta-2}I+\beta(\beta-2)L_2|x|^{\beta-2}\frac{x}{|x|}\otimes\frac{x}{|x|},
$$
where it is easy to see that $D^2\varphi$ is a positive definite matrix and
\begin{equation}
\label{2-3-1}
\|D^2\varphi\|\leq \beta(\beta-1)L_2|x|^{\beta-2}.
\end{equation}
Here $\xi\otimes\xi$ is the matrix with entries $\xi_i\xi_j$ for a vector $\xi\in\mathbb{R}^n$. We next evaluate
\begin{align*}
&\quad\big[(|D\varphi|^2+\varepsilon^2)^\frac{q}{2}+a(x,t)(|D\varphi|^2+\varepsilon^2)^\frac{s}{2}\big]
\left(\delta_{ij}+(p-2)\frac{\varphi_i\varphi_j}{|D\varphi|^2+\varepsilon^2}\right)\varphi_{ij}\\
&\leq C(n,p)(|D\varphi|^q+a^+|D\varphi|^s)\|D^2\varphi\|\\
&\leq C\left(L^{1+q}_2|x|^{q(\beta-1)+\beta-2}+L^{1+s}_2|x|^{s(\beta-1)+\beta-2}\right)\\
&\leq C(L^{1+q}_2+L^{1+s}_2)
\end{align*}
by taking $\beta\geq\frac{q+2}{q+1}(>2)$, where $C$ depends only on $n,p,q,s,a^+$. Thereby, in order to assure \eqref{2-3}, we need to fix $L_1=C(L^{1+q}_2+L^{1+s}_2)$.

Finally, applying the comparison principle together with the choices of $L_1,L_2$, we arrive at
\begin{align*}
&\quad u(0,t)-u(0,t_0)\\
&\leq\eta+L_1(t-t_0)\\
&\leq\eta+C\left(\eta^{1-\beta}\|D u\|^\beta_{L^\infty(Q_{7/8})}+2(4/3)^\beta\|u\|_{L^\infty(Q_1)}+1\right)^{1+q}(t-t_0)\\
&\quad+C\left(\eta^{1-\beta}\|D u\|^\beta_{L^\infty(Q_{7/8})}+2(4/3)^\beta\|u\|_{L^\infty(Q_1)}+1\right)^{1+s}(t-t_0)\\
&\leq\eta+C\|D u\|^{\beta(1+q)}_{L^\infty(Q_{7/8})}\eta^{(1-\beta)(1+q)}|t-t_0|+C\|D u\|^{\beta(1+s)}_{L^\infty(Q_{7/8})}\eta^{(1-\beta)(1+s)}|t-t_0|\\
&\quad+C(\|u\|_{L^\infty(Q_1)}+1)^{1+s}|t-t_0|.
\end{align*}
We now pick $\eta=|t-t_0|^\gamma$ with $0<\gamma<1$ to be fixed later. Then it follows that
\begin{align*}
&\quad u(0,t)-u(0,t_0)\\
&\leq |t-t_0|^\gamma+C\|D u\|^{\beta(1+q)}_{L^\infty(Q_{7/8})}|t-t_0|^{\gamma(1-\beta)(1+q)+1}\\
&\quad+C\|D u\|^{\beta(1+s)}_{L^\infty(Q_{7/8})}|t-t_0|^{\gamma(1-\beta)(1+s)+1}
+C(\|u\|_{L^\infty(Q_1)}+1)^{1+s}|t-t_0|.
\end{align*}
Then $\gamma$ can be chosen as
\begin{equation*}
\begin{cases}
\gamma(1-\beta)(1+q)+1-\gamma\geq0,\\[2mm]
\gamma(1-\beta)(1+s)+1-\gamma\geq0, \\[2mm]
0<\gamma<1,
\end{cases}
\end{equation*}
which leads to
$$
\gamma\leq\frac{1}{\beta(1+s)-s}(<1)
$$
by noting that $-1<q\leq s<0$. As has been stated above, we could determine $\beta=\frac{q+2}{q+1}$ and $\gamma=\frac{1}{\beta(1+s)-s}$. We then get the desired result for the case that  $-1<q\leq s<0$.

\smallskip

\textbf{Case 2.} $-1<q<0\leq s$. Similarly to Case 1, we get
\begin{align*}
&\quad\big[(|D\varphi|^2+\varepsilon^2)^\frac{q}{2}+a(x,t)(|D\varphi|^2+\varepsilon^2)^\frac{s}{2}\big]
\left(\delta_{ij}+(p-2)\frac{\varphi_i\varphi_j}{|D\varphi|^2+\varepsilon^2}\right)\varphi_{ij}\\
&\leq C(n,p)(|D\varphi|^q+a^+(|D\varphi|^s+1))\|D^2\varphi\|\\
&\leq C\left(L^{1+q}_2|x|^{q(\beta-1)+\beta-2}+L_2|x|^{\beta-2}+L^{1+s}_2|x|^{s(\beta-1)+\beta-2}\right)\\
&\leq C(L^{1+q}_2+L_2+L^{1+s}_2)
\end{align*}
by taking $\beta\geq\frac{q+2}{q+1}$, where $C$ depends only on $n,p,q,s$ and $a^+$. Hence we can choose $L_1=C(L^{1+q}_2+L_2+L^{1+s}_2)$ to ensure \eqref{2-3}.

Then through the comparison principle and the choices of $L_1,L_2$, it yields that
\begin{align*}
&\quad u(0,t)-u(0,t_0)\\
&\leq\eta+L_1(t-t_0)\\
&\leq\eta+C\|D u\|^{\beta(1+q)}_{L^\infty(Q_{7/8})}\eta^{(1-\beta)(1+q)}|t-t_0|+C\|D u\|^\beta_{L^\infty(Q_{7/8})}\eta^{1-\beta}|t-t_0|\\
&\quad+C\|D u\|^{\beta(1+s)}_{L^\infty(Q_{7/8})}\eta^{(1-\beta)(1+s)}|t-t_0|
+C(\|u\|_{L^\infty(Q_1)}+1)^{1+s}|t-t_0|.
\end{align*}
Let $\eta=|t-t_0|^\gamma$. The above display then becomes
\begin{align*}
&\quad u(0,t)-u(0,t_0)\\
&\leq|t-t_0|^\gamma+C\|D u\|^{\beta(1+q)}_{L^\infty(Q_{7/8})}|t-t_0|^{\gamma(1-\beta)(1+q)+1}+C\|D u\|^\beta_{L^\infty(Q_{7/8})}|t-t_0|^{\gamma(1-\beta)+1}\\
&\quad+C\|D u\|^{\beta(1+s)}_{L^\infty(Q_{7/8})}|t-t_0|^{\gamma(1-\beta)(1+s)+1}
+C(\|u\|_{L^\infty(Q_1)}+1)^{1+s}|t-t_0|.
\end{align*}
We shall pick $0<\gamma<1$ such that
\begin{equation*}
\begin{cases}
\gamma(1-\beta)(1+q)+1-\gamma\geq0,\\[2mm]
\gamma(1-\beta)+1-\gamma\geq0,\\[2mm]
\gamma(1-\beta)(1+s)+1-\gamma\geq0,
\end{cases}
\end{equation*}
i.e.,
$$
\gamma\leq\frac{1}{\beta(1+s)-s}.
$$
Consequently, we can fix $\beta=\frac{q+2}{q+1}$ and $\gamma=\frac{1}{\beta(1+s)-s}$. We then finish the proof for the case that  $-1<q<0\leq s$.

\smallskip

\textbf{Case 3.} $0\leq q\leq s$. This time, we can verify that $\varphi(x,t)$ is a supersolution of a linear parabolic equation with coefficients depending on $u$. That is,
$$
\partial_t\varphi-\big[(|D u|^2+\varepsilon^2)^\frac{q}{2}+a(x,t)(|D u|^2+\varepsilon^2)^\frac{s}{2}\big]
\left(\delta_{ij}+(p-2)\frac{u_iu_j}{|D u|^2+\varepsilon^2}\right)\varphi_{ij}\geq0.
$$
Because $q\geq0$ and $|D u|$ is known to be bounded by the Lipschitz continuity, we can rewrite this display as
\begin{equation}
\label{2-4}
\partial_t\varphi-a_{ij}(x,t)\varphi_{ij}\geq0,
\end{equation}
where
\begin{equation}
\label{2-5}
|a_{ij}(x,t)|\leq C(p)((\|D u\|_{L^\infty(Q_{7/8})}+\varepsilon)^q+a^+(\|D u\|_{L^\infty(Q_{7/8})}+\varepsilon)^s).
\end{equation}
Here the boundedness on $|a_{ij}(x,t)|$ depends on $n,p,q,s, a^-,a^+,C_{\rm lip}$ and $\|u\|_{L^\infty(Q_1)}$ actually. We fix $\beta=2$. Thus by \eqref{2-3-1} and \eqref{2-5}, we take
$$
L_1=C(\|D u\|_{L^\infty(Q_{7/8})}+1)^sL_2,
$$
which makes \eqref{2-4} hold true. In turn, utilizing the comparison principle again, we have
$$
u(0,t)-u(0,t_0)\leq \eta+C(\|D u\|_{L^\infty(Q_{7/8})}+1)^s(\eta^{-1}\|D u\|^2_{L^\infty(Q_{7/8})}+\|u\|_{L^\infty(Q_1)}+1)|t-t_0|.
$$
Taking
$$
\eta=(\|D u\|_{L^\infty(Q_{7/8})}+1)^{\frac{s}{2}+1}|t-t_0|^\frac{1}{2},
$$
we derive
\begin{align*}
u(0,t)-u(0,t_0)&\leq (\|D u\|_{L^\infty(Q_{7/8})}+1)^{\frac{s}{2}+1}|t-t_0|^\frac{1}{2}\\
&\quad+C(\|D u\|_{L^\infty(Q_{7/8})}+1)^s(\|u\|_{L^\infty(Q_1)}+1)|t-t_0|.
\end{align*}
This proof now is finished.
\end{proof}

Next we end this section by presenting two important properties of viscosity solutions, comparison principle and stability, which will be exploited in the proof of Theorem \ref{thm3-9} below. However, their proof shall be postponed to Section \ref{sec4}.

\begin{proposition}[Stability]
\label{pro2-3}
Assume that $\{u_i\}$ is a sequence of viscosity solutions to \eqref{2-1} in $Q_1$ with $\varepsilon_i\geq0$ such that $\varepsilon_i\rightarrow0$. Let $u_i$ converge to $u$ locally uniformly in $Q_1$. Then we can infer that $u$ is a viscosity solution to \eqref{main} in $Q_1$.
\end{proposition}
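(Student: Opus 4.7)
The plan is to verify that $u$ satisfies the viscosity supersolution condition of Definition~\ref{def1} for \eqref{main}; the subsolution case is symmetric. Fix $(x_0, t_0) \in Q_1$ and a test function $\varphi \in C^2(Q_1)$ with $u - \varphi$ attaining a local minimum at $(x_0, t_0)$ and $D\varphi(x, t) \neq 0$ for $x \neq x_0$. After a standard perturbation of $\varphi$ (adding, for instance, $\delta(|x - x_0|^{2m} + (t - t_0)^2)$ with $m$ large and $\delta > 0$ small), I may assume this minimum is strict on a closed cylinder $\overline{Q_r(x_0, t_0)}$ while preserving both the value of $D\varphi$ at $(x_0, t_0)$ and, on a possibly smaller cylinder, the nondegeneracy $D\varphi \neq 0$ off the axis $\{x = x_0\}$. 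Locally uniform convergence $u_i \to u$ and strictness then yield minimum points $(x_i, t_i) \in \overline{Q_r(x_0, t_0)}$ of $u_i - \varphi$ with $(x_i, t_i) \to (x_0, t_0)$.

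Next, I apply the viscosity supersolution property of $u_i$ for \eqref{2-1} at $(x_i, t_i)$. When $\varepsilon_i > 0$, the nonlinearity
\[
G_\varepsilon(x, t) := \bigl[(|D\varphi|^2 + \varepsilon^2)^{q/2} + a(x,t)(|D\varphi|^2 + \varepsilon^2)^{s/2}\bigr]\left(\delta_{kj} + (p-2)\frac{\varphi_k \varphi_j}{|D\varphi|^2 + \varepsilon^2}\right)\varphi_{kj}
\]
is continuous at all $(x, t, D\varphi, D^2 \varphi)$, so Definition~\ref{def1} reduces to the pointwise inequality $\partial_t \varphi(x_i, t_i) \geq G_{\varepsilon_i}(x_i, t_i)$. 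For any index with $\varepsilon_i = 0$, the same statement with $G_0$ in place of $G_{\varepsilon_i}$, in the JLM limsup sense, is a direct consequence of Definition~\ref{def1} for $u_i$.

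Finally, I pass to the limit $i \to \infty$. If $D\varphi(x_0, t_0) \neq 0$, then $|D\varphi(x_i, t_i)|$ remains bounded below, $G_{\varepsilon_i}(x_i, t_i) \to G_0(x_0, t_0)$ by continuity, and the limit of the regularized inequality yields the pointwise supersolution inequality for \eqref{main} at $(x_0, t_0)$, which is strictly stronger than the required limsup. The main obstacle is the singular case $D\varphi(x_0, t_0) = 0$ combined with $q < 0$ or $s < 0$: now $|D\varphi(x_i, t_i)| \to 0$ and $\varepsilon_i \to 0$ may occur at incompatible rates, so that $G_{\varepsilon_i}(x_i, t_i)$ and $G_0(x_i, t_i)$ need not be comparable in any simple way. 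The JLM formulation built into Definition~\ref{def1} is tailored exactly for this: only the existence of a sequence $(y_k, \tau_k) \to (x_0, t_0)$ with $y_k \neq x_0$ along which $\partial_t \varphi - G_0 \geq -o(1)$ is required. The hypothesis $D\varphi \neq 0$ off $\{x = x_0\}$ allows, after passing to a subsequence, a selection of $(y_k, \tau_k)$ near the minimum points $(x_{i_k}, t_{i_k})$ at which $|D\varphi|$ dominates $\varepsilon_{i_k}$ sufficiently for $G_{\varepsilon_{i_k}}(y_k, \tau_k) - G_0(y_k, \tau_k) \to 0$; combining this with continuity of the derivatives of $\varphi$ on the short segment between $(x_{i_k}, t_{i_k})$ and $(y_k, \tau_k)$, together with the regularized inequality at $(x_{i_k}, t_{i_k})$, produces the desired limsup $\geq 0$ and concludes the proof.
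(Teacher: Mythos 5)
Your proposal follows essentially the same structure as the paper's proof (Proposition~\ref{pro4-2}): use local uniform convergence and strictness to extract minimum points $(x_i,t_i)\to(x_0,t_0)$ of $u_i-\varphi$, invoke the viscosity supersolution inequality for the regularized equation at these points, and pass to the limsup. You are more careful than the paper in two respects: you make the minimum strict by an explicit perturbation (the paper tacitly assumes the minimizer convergence), and you explicitly single out the case $D\varphi(x_0,t_0)=0$ with $q<0$ or $s<0$, which the paper passes over entirely by simply writing the pointwise inequality at $(x_i,t_i)$ and then asserting the limsup. However, your treatment of that delicate case is itself a sketch rather than a proof: the regularized inequality is available only at $(x_{i_k},t_{i_k})$, not at the off-axis points $(y_k,\tau_k)$ where you propose that $|D\varphi|$ dominates $\varepsilon_{i_k}$, and invoking ``continuity of the derivatives of $\varphi$ on the short segment'' does not transfer the inequality because the singular factor $|D\varphi|^q$ can vary by an unbounded ratio over such a segment when $|D\varphi|\to 0$. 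In particular, if $x_{i_k}=x_0$ (so $D\varphi(x_{i_k},t_{i_k})=0$), the regularized inequality controls $[\varepsilon_{i_k}^q+a\,\varepsilon_{i_k}^s]\,\mathrm{tr}\,D^2\varphi$, a quantity with no a priori quantitative relation to $|D\varphi(y_k,\tau_k)|^q$ at a nearby point. In short, you correctly locate the subtlety that the paper's proof glosses over, but the argument you offer to close it is incomplete; both proofs are at roughly the same level of rigor on this point, yours being somewhat more self-aware.
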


Once deriving the Lipschitz continuity of solutions to equation \eqref{main}, we can show the following comparison principle that is interesting by itself.

\begin{proposition}[Comparison principle]
\label{pro2-4}
Let the function $a(x,t)>0$ in \eqref{main} be Lipschitz continuous in time-space variables. Assume that $u$ and $v$ are a viscosity subsolution and a locally uniformly Lipschitz continuous viscosity supersolution in $x$-variable to \eqref{main} in $Q_1$, respectively. If $u\leq v$ on $\partial_pQ_1$, then there holds that
$$
u\leq v \quad\text{in } Q_1.
$$
\end{proposition}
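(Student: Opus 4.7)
The plan is the classical doubling-of-variables argument for parabolic viscosity comparison, combined with a matrix square root reformulation of the operator to absorb the Lipschitz $x$-dependence of $a(x,t)$. Suppose, for contradiction, that $M:=\sup_{Q_1}(u-v) > 0$; after subtracting a small penalty $\eta t$ (with $\eta>0$ chosen so small that the interior maximum survives while the boundary inequality $u\leq v$ on $\partial_p Q_1$ is preserved), the supremum is attained at an interior point $(\hat x,\hat t)$. For $\alpha\gg 1$ set
\[
\Psi_\alpha(x,y,t):=u(x,t)-v(y,t)-\tfrac{\alpha}{2}|x-y|^2-\eta t,
\]
and let $(x_\alpha,y_\alpha,t_\alpha)$ be a maximum point over a compact subdomain on which $a\geq a^->0$. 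Standard doubling estimates, together with upper semicontinuity of $u$ and the assumed Lipschitz continuity of $v$ in $x$, yield $(x_\alpha,y_\alpha,t_\alpha)\to(\hat x,\hat x,\hat t)$, $\alpha|x_\alpha-y_\alpha|^2\to 0$, and the abbreviation $\xi_\alpha:=\alpha(x_\alpha-y_\alpha)$ satisfies $|\xi_\alpha|\leq L_v$, with $L_v$ the spatial Lipschitz constant of $v$.

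\textbf{Key computation.} The parabolic Crandall--Ishii maximum principle for semicontinuous functions produces $(\tau_1,\xi_\alpha,X_\alpha)\in\overline{\mathcal P}^{2,+}u(x_\alpha,t_\alpha)$ and $(\tau_2,\xi_\alpha,Y_\alpha)\in\overline{\mathcal P}^{2,-}v(y_\alpha,t_\alpha)$ with $\tau_1-\tau_2=\eta$ and
\[
\begin{pmatrix}X_\alpha&0\\0&-Y_\alpha\end{pmatrix}\leq 3\alpha\begin{pmatrix}I&-I\\-I&I\end{pmatrix}.
\]
Writing $\mathcal{M}(\xi,X)=\mathrm{tr}(B(\xi)X)$ with $B(\xi)=I+(p-2)\hat\xi\otimes\hat\xi$ symmetric positive definite (since $1<p<\infty$), I introduce the matrix square root
\[
\sigma(x,t,\xi):=\sqrt{|\xi|^q+a(x,t)|\xi|^s}\,B(\xi)^{1/2},
\]
so that $\mathrm{tr}(\sigma X\sigma)=[|\xi|^q+a(x,t)|\xi|^s]\,\mathcal{M}(\xi,X)$. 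Assuming $\xi_\alpha\neq 0$, subtracting the two viscosity inequalities and testing the matrix inequality against vector pairs $(\sigma_1\zeta,\sigma_2\zeta)$ with $\sigma_i$ the evaluation of $\sigma$ at $(x_\alpha,t_\alpha)$ and $(y_\alpha,t_\alpha)$ respectively, one obtains after tracing
\[
\eta\leq 3\alpha\|\sigma_1-\sigma_2\|_{\mathrm{HS}}^2.
\]
The identity $\sqrt{A_1}-\sqrt{A_2}=(A_1-A_2)/(\sqrt{A_1}+\sqrt{A_2})$ combined with the positive lower bound $A_i\geq a^-|\xi_\alpha|^s$ and the Lipschitz continuity of $a$ gives $\|\sigma_1-\sigma_2\|_{\mathrm{HS}}^2\leq C|x_\alpha-y_\alpha|^2|\xi_\alpha|^s$ for $C=C(n,p,a^-,\|Da\|_\infty)$. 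Hence
\[
\eta\leq 3C\,\alpha|x_\alpha-y_\alpha|^2|\xi_\alpha|^s=3C|\xi_\alpha|^{s+1}|x_\alpha-y_\alpha|\leq 3CL_v^{s+1}|x_\alpha-y_\alpha|\longrightarrow 0
\]
as $\alpha\to\infty$; here $s+1>0$ because $s\geq q>-1$ and $|\xi_\alpha|\leq L_v$ is used decisively. This contradicts $\eta>0$.

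\textbf{Principal obstacle.} The case $\xi_\alpha=0$ (that is, $x_\alpha=y_\alpha$ along a subsequence) cannot be excluded a priori in the singular regime $-1<q<0$, and there the paper's $\limsup/\liminf$ formulation of the viscosity definition returns no useful information since $|D\varphi|^q\to\infty$ as $x\to x_\alpha$. Handling this is exactly the point where the hypothesis that $v$ be Lipschitz in $x$ is indispensable: I would perturb $v$ by $\delta|x-\hat x|^2$, or equivalently run the whole argument on sup- and inf-convolutions of $u$ and $v$, to force $x_\alpha\neq y_\alpha$ while preserving $|\xi_\alpha|\leq L_v$, and then send $\delta\to 0$ at the end. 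The positive lower bound $a\geq a^->0$ required by the matrix square root estimate is secured automatically on any compact subset of $Q_1$ by the continuity and positivity hypothesis on $a$.
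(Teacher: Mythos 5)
Your Case-2 argument (the case $\xi_\alpha\neq 0$) is essentially sound and even slightly cleaner than the paper's: you place the entire prefactor $|\xi|^q+a(x,t)|\xi|^s$ under the matrix square root, whereas the paper splits the operator into $F_1+F_2$, handles the $|\xi|^q$-part through plain monotonicity and $X_j\leq Y_j$, and applies the square-root Lipschitz estimate only to the $a(x,t)|\xi|^s$-part. Your bound $\|\sigma_1-\sigma_2\|_{\mathrm{HS}}^2\leq C|x_\alpha-y_\alpha|^2|\xi_\alpha|^s$ follows from $A_i\geq a^-|\xi_\alpha|^s\min\{1,p-1\}I$ exactly as claimed, and the decisive step $|\xi_\alpha|\leq L_v$ combined with $s+1>0$ is in both proofs and is correct.

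The gap is in the case $x_\alpha=y_\alpha$, which you correctly identify as the principal obstacle in the singular regime but whose proposed repair does not work. Adding $\delta|x-\hat x|^2$ to $v$, or replacing $u,v$ by sup/inf-convolutions, does not force the doubling maximum off the diagonal; one can still have $x_\alpha=y_\alpha$. With your quadratic penalty the test function touching $v$ from below has $D\phi$ vanishing precisely at the touching point while $D^2\phi$ is a nonzero multiple of $I$; in the regime $-1<q<0$ the factor $|D\phi|^q$ diverges and the $\limsup$ in Definition~\ref{def1} evaluates to $+\infty$, yielding no information. Closure jets $\overline{\mathcal P}^{2,\pm}$ give no usable inequality either, since the operator is not continuous at $\xi=0$ when $q<0$.

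The paper closes this case with two coordinated devices that you dropped. First, it doubles the time variable,
$$
\Psi_j(x,y,t,s)=\tfrac{j}{l}|x-y|^l+\tfrac{j}{2}(t-s)^2,
$$
so that $\phi(y,s)=-\Psi_j(x_j,y,t_j,s)+\mathrm{const}$ is a genuinely $C^2$ test function touching $v$ alone; with a single time variable the natural test function would contain $u(x_\alpha,\cdot)$, which is merely upper semicontinuous and hence not admissible in Definition~\ref{def1}. Second, it takes
$$
l>\max\left\{2,\tfrac{q+2}{q+1},\tfrac{s+2}{s+1}\right\}.
$$
With this $l$ the operator applied to $\phi$ is proportional to $j^{q+1}|x_j-y|^{q(l-1)+l-2}+a(y,s)j^{s+1}|x_j-y|^{s(l-1)+l-2}$, and both exponents of $|x_j-y|$ are strictly positive, so the $\limsup$ collapses to the time derivative $j(t_j-s_j)$ of the test function; the analogous computation for $u$ then forces $j(t_j-s_j)\leq 0$ and $j(t_j-s_j)>0$ simultaneously, a contradiction. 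To repair your argument you would need to raise the penalty exponent to this $l$ \emph{and} double the time variable; a perturbation or convolution on its own does not produce $x_\alpha\neq y_\alpha$.
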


\section{H\"{o}lder estimates on the spatial gradients}
\label{sec3}

In this section, we assume that $a(x,t)\in C^1(Q_1)$ and $A:=\|D_{x,t}a\|_{L^\infty(Q_1)}<\infty$. Since Lemma \ref{lem2-1} states that the solutions to \eqref{2-1} in $Q_1$ have uniform interior Lipschitz estimates in $x$-variable (which is independent of $\varepsilon\in[0,1)$), we can see that
$$
\|D u\|_{L^\infty(Q_{7/8})}\leq C(n,p,q,s,a^-,A,\|u\|_{L^\infty(Q_1)}).
$$
Here  we refer to Lemma \ref{lem2-1} for the explicit dependencies of parameters. In what follows, we may suppose that $D u$ is bounded in $Q_1$ for convenience, as we could obtain these conclusions in the preceding section in a larger domain such as $Q_2$.

We are going to establish the H\"{o}lder estimates on $D u$ at the origin $(0,0)$, and then deduce plainly the interior H\"{o}lder continuity of $D u$ by standard translation arguments. The idea of this proof is analogous to that in \cite{IJS19,JS17}, but there exist many extra delicate difficulties caused by the coefficient $a(x,t)$ and the $(q,s)$-growth. We will consider the so-called intrinsic (re-scaled) parabolic cylinder defined as
$$
Q^\rho_r=B_r\times(-\rho^{-q}r^2,0] \quad\text{with } r,\rho>0.
$$
The same family of parabolic cylinders $Q^\rho_r$ has been utilized in \cite{DF85}. If $u$ solves \eqref{2-1} in $Q^\rho_r$ and we denote $v(x,t)=\frac{1}{\rho r}u(rx,\rho^{-q}r^2t)$ with $(x,t)\in Q_1$, then it is easy to check that
\begin{equation}
\label{3-1}
\partial_tv=\big[(|D v|^2+\overline{\varepsilon}^2)^\frac{q}{2}+\overline{a}(x,t)(|D v|^2+\overline{\varepsilon}^2)^\frac{s}{2}\big]\left(\delta_{ij}+(p-2)\frac{v_i v_j}{|D v|^2+\overline{\varepsilon}^2}\right)v_{ij}
\end{equation}
in $Q_1$, where
$$
\overline{a}(x,t)=\rho^{s-q}a(rx,\rho^{-q}r^2t) \quad \text{and} \quad \overline{\varepsilon}=\varepsilon \rho^{-1}.
$$
Particularly, it is noteworthy that if $u$ solves \eqref{main} in $Q^\rho_r$, then $v$ (defined as before) is a solution to
\begin{equation}
\label{3-1-1}
\partial_tv=[|D v|^q+\overline{a}(x,t)|D v|^s]\left(\delta_{ij}+(p-2)\frac{v_i v_j}{|D v|^2}\right)v_{ij}  \quad \text{in } Q_1.
\end{equation}
Hence when we fix $\rho\geq \|D u\|_{L^\infty(Q_1)}+1$, we know that the solutions of \eqref{3-1} or \eqref{3-1-1} satisfy $|D v|\leq1$ in $Q_1$. Therefore, in the sequel, we may suppose that the solutions to \eqref{2-1} fulfill $|D u|\leq1$ in $Q_1$. Next, we proceed with considering \eqref{2-1} and investigate the H\"{o}lder continuity of gradients of its solutions, from which we can derive the higher regularity (Theorem \ref{thm0}) for the solutions to \eqref{main} (by sending $\varepsilon\rightarrow0$). To this end, we first show the H\"{o}lder estimates on the gradients of solutions to \eqref{main} under the assumption that $\|D_{x,t}a(x,t)\|_{L^\infty(Q_1)}$ ($\leq 1$) is small. In turn, by doing a scaling work, we eventually demonstrate the H\"{o}lder regularity for the gradients of solutions to \eqref{main} under the condition that $\|D_{x,t}a(x,t)\|_{L^\infty(Q_1)}$ is finite, that is, $|D_{x,t}a(x,t)|$ exhibits a general bound in $Q_1$.

\subsection{H\"{o}lder regularity of spatial gradients in the case that $\|D_{x,t}a(x,t)\|_{L^\infty(Q_1)}$ is small} 

We may assume $\|D_{x,t}a(x,t)\|_{L^\infty(Q_1)}\leq1$. Now we are ready to verify that when the projection of $D u$ onto the direction $e\in \mathbb{S}^{n-1}$ (i.e., $|e|=1$) is away from 1 in a large portion of $Q_1$, then in a smaller cylinder the inner product $D u\cdot e$ has improved oscillation.

\begin{lemma}
\label{lem3-1}
Let the conditions \eqref{1-1} and \eqref{1-2} be in force. Assume that $u$ is a smooth solution of \eqref{2-1} with $\varepsilon\in(0,1)$ such that $|D u|\leq1$ in $Q_1$. For each $l\in(\frac{1}{2},1)$ and $\mu>0$, if $0\leq a(x,t)\in C^1(Q_1)$ and $\|D a\|_{L^\infty(Q_1)}\leq \kappa$, where $\kappa\in(0,1]$ is a sufficiently small constant depending on $n,p,q,s,a^+, \mu$ and $l$, then there is $\tau_0\in(0,\frac{1}{4})$ only depending on $n,\mu$ and there are $\tau,\delta>0$ depending upon $n,p,q,s,a^+,\mu$ and $l$ such that for arbitrary $e\in \mathbb{S}^{n-1}$ if
$$
|\{(x,t)\in Q_1: D u\cdot e\leq l\}|>\mu|Q_1|,
$$
one has
$$
D u\cdot e<1-\delta \quad \text{in }  Q^{1-\delta}_\tau
$$
with $Q^{1-\delta}_\tau\subset Q_{\tau_0}$.
\end{lemma}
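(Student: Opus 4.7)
The plan is to differentiate the regularized equation \eqref{2-1} in the spatial direction $e$, derive a linear parabolic equation for $w:=u_e=Du\cdot e$, and then convert the measure-theoretic hypothesis $|\{w\le l\}|>\mu|Q_1|$ into the pointwise bound $w<1-\delta$ on $Q_\tau^{1-\delta}$ by combining a weak $L^\varepsilon$/Krylov--Safonov type measure-to-pointwise estimate with a carefully tailored auxiliary barrier. Setting $g(\xi,x,t):=(|\xi|^2+\varepsilon^2)^{q/2}+a(x,t)(|\xi|^2+\varepsilon^2)^{s/2}$, differentiation of \eqref{2-1} in the direction $e$ produces
\begin{equation*}
\partial_t w = g(Du,x,t)\Bigl(\delta_{ij}+(p-2)\frac{u_iu_j}{|Du|^2+\varepsilon^2}\Bigr)w_{ij} + \mathcal{R}[u]\cdot Dw + h,
\end{equation*}
where $\mathcal{R}[u]$ is a first-order linear operator with coefficients depending on $Du$, $D^2u$ and $a$, and
\begin{equation*}
h = (\partial_e a)(|Du|^2+\varepsilon^2)^{s/2}\Bigl(\delta_{ij}+(p-2)\frac{u_iu_j}{|Du|^2+\varepsilon^2}\Bigr)u_{ij}
\end{equation*}
is the genuinely new inhomogeneous contribution absent in \cite{IJS19}. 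Since $|Du|\le 1$ by hypothesis and $|Da|\le\kappa$, this remainder obeys $|h|\le C(n,p,q,s,a^+)\,\kappa\,|D^2u|$.

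Next I would set $v:=1-w\ge 0$ and recast the hypothesis as $|\{v\ge 1-l\}|>\mu|Q_1|$, the goal being $v\ge\delta$ on $Q_\tau^{1-\delta}$. Wherever $v$ is small, $|Du|$ is close to $1$, and the principal part of the differentiated equation becomes uniformly parabolic with ellipticity controlled from below by a multiple of $(1-\delta)^q+a^-(1-\delta)^s$ and from above by a multiple of $1+a^+$. The intrinsic cylinder $Q_\tau^{1-\delta}=B_\tau\times(-(1-\delta)^{-q}\tau^2,0]$ is precisely the one that restores unit parabolic scaling for this diffusion scale. A Krylov--Safonov type weak $L^\varepsilon$ estimate for nonnegative supersolutions of the linearized equation then converts the measure lower bound on $v$ into the pointwise lower bound $v\ge\delta$, provided the inhomogeneous remainder $h$ can be absorbed; this is what forces $\kappa$ to be chosen small in terms of the final $\tau,\delta$.

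The main obstacle, as flagged in the introduction, is the construction of the auxiliary function used as a barrier: compared with \cite[Lemma 4.1]{IJS19} it must additionally dominate the Hessian-weighted contribution from $\partial_e a$, reconcile the two competing gradient scales $|Du|^q$ and $|Du|^s$ which, in the mixed regime $-1<q<0\le s$, behave oppositely as $|Du|\to 1^-$, and be simultaneously compatible with the singular range $-1<q<0$ and the degenerate range $s\ge 0$. My plan is to start from the barrier of \cite{IJS19}, augment it by lower-order corrections of size $O(\kappa)$ tuned to match $h$, verify via an explicit pointwise algebraic inequality (mirroring the chain of computations of \cite[Lemma 4.1]{IJS19} but with the extra $a$-dependence) that the augmented function remains a viscosity supersolution of the nonlinear problem, and then invoke the comparison principle of Proposition \ref{pro2-4}. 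The constants are selected in the order $\tau_0$ (from a covering argument depending only on $n,\mu$), then $\tau$ and $\delta$ (in terms of $n,p,q,s,a^+,\mu,l$), and finally $\kappa$ small enough to absorb all the $Da$-errors.
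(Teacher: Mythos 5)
Your proposal captures the right high-level strategy (differentiate the regularized equation, linearize, then invoke a measure-to-pointwise estimate) and correctly identifies the new term coming from $\partial_e a$, but there is a genuine gap in the choice of scalar quantity. You propose to work with $w:=Du\cdot e$. The paper instead works with
\[
w:=\bigl(Du\cdot e - l + \rho|Du|^2\bigr)_+, \qquad \rho=\tfrac{l}{4},
\]
and the added term $\rho|Du|^2$ is not a cosmetic refinement --- it is the mechanism that makes the argument close. When one computes the evolution of $|Du|^2$, the principal part produces the \emph{negative} quadratic Hessian contribution $-2\rho\,a_{ij}(x,t,Du)u_{ki}u_{kj}$, which on the set $\{w>0\}$ (where $|Du|>l/2$) is bounded above by $-c\,\rho\,l^{q}\|D^2u\|^2$ for $q\ge0$ (or $-c\,\rho\,\|D^2u\|^2$ for $q<0$). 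It is exactly this sink that absorbs, via Cauchy--Schwarz, both the quasilinear commutator $a_{ij,m}(x,t,Du)u_{ij}w_m\lesssim |Dw|\,\|D^2u\|$ and, crucially for this paper, the new $Da$-term you flagged, which is of size $\kappa\|D^2u\|$. With $w=Du\cdot e$ alone there is no such quadratic sink, so you are left with $\kappa\|D^2u\|$ and $|Dw|\,\|D^2u\|$ on the right-hand side with no way to control $\|D^2u\|$: no choice of $\kappa$, however small, resolves the absorption problem you yourself point out, and the $O(\kappa)$ ``lower-order corrections'' to the barrier that you mention cannot dominate an unbounded Hessian.

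Second, the closing mechanism in the paper is not an explicit barrier together with the comparison principle. After obtaining $\partial_t w\le\tilde a_{ij}w_{ij}+C_1(l)|Dw|^2+\overline c$ on $\{w>0\}$ (with $\overline c=C_2(l)\kappa^2$ carrying the $Da$-error), one performs the exponential change of variable $U=\tfrac1\nu\bigl(1-e^{\nu(w-\overline c\,t-W)}\bigr)$, which annihilates the $|Dw|^2$ term and produces a \emph{nonnegative supersolution of a linear uniformly parabolic equation} on the standard cylinder $Q_1$; uniform parabolicity holds here because $|Du|>l/2>1/4$ on $\{w>0\}$, so the intrinsic cylinder is not needed at this stage (it enters only at the very end, when restricting to $Q_\tau^{1-\delta}\subset Q_{\tau_0}$ so the iteration in Corollary \ref{cor3-2} closes). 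The measure-to-pointwise step is then Proposition 2.3 of \cite{JS17}, a growth-lemma/weak-Harnack estimate applied to $U$, and the constant $\gamma_0$ lifting the measure bound to a pointwise bound comes from that lemma, not from a constructed barrier and Proposition \ref{pro2-4}. Your plan conflates the two routes; both the Bochner-type augmentation by $\rho|Du|^2$ and the exponential linearization would need to be made explicit for the argument to go through.
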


\begin{proof}
Set
\begin{equation}
\label{3-2}
a_{ij}(x,t,\eta)=\big[(|\eta|^2+\varepsilon^2)^\frac{q}{2}+a(x,t)(|\eta|^2+\varepsilon^2)^\frac{s}{2}\big]\left(\delta_{ij}+(p-2)\frac{\eta_i \eta_j}{|\eta|^2+\varepsilon^2}\right)
\end{equation}
with $\eta\in\mathbb{R}^n$ and
$$
a_{ij,m}(x,t,\eta):=\frac{\partial a_{ij}(x,t,\eta)}{\partial\eta_m},
$$
where $\eta_i$ denotes the $i$-th component of $\eta$. By differentiating equation \eqref{2-1} in $x_k$, we get
\begin{align*}
\partial_t(u_k)&=a_{ij}(x,t,D u)(u_k)_{ij}+a_{ij,m}(x,t,D u)u_{ij}(u_k)_m\\
&\quad+\partial_ka(x,t)(|D u|^2+\varepsilon^2)^{\frac{s}{2}}\left(\delta_{ij}+(p-2)\frac{u_i u_j}{|D u|^2+\varepsilon^2}\right)u_{ij}.
\end{align*}
In the rest of proof, let
\begin{equation*}
b_{ij}(D u)=(|D u|^2+\varepsilon^2)^{\frac{s}{2}}\left(\delta_{ij}+(p-2)\frac{u_i u_j}{|D u|^2+\varepsilon^2}\right).
\end{equation*}
We further have
\begin{align*}
\partial_t(D u\cdot e-l)&=a_{ij}(x,t,D u)(D u\cdot e-l)_{ij}
+a_{ij,m}(x,t,D u)u_{ij}(D u\cdot e-l)_m\\
&\quad+D a\cdot eb_{ij}(D u) u_{ij}.
\end{align*}
Let $h=|D u|^2$. Then
$$
\partial_t h=2D u\cdot D u_t, \quad h_i=2D u\cdot D u_i,
$$
$$
h_{ij}=2D u_j\cdot D u_i+2D u\cdot D u_{ij}.
$$
By direct calculation, it yields that
\begin{align*}
\partial_t h&=a_{ij}(x,t,D u)h_{ij}+a_{ij,m}(x,t,D u)u_{ij}h_m+2D a\cdot D u b_{ij}(D u)u_{ij}\\
&\quad-2a_{ij}(x,t,D u)u_{ki}u_{kj}.
\end{align*}
For $\rho=\frac{l}{4}$, define
$$
w=(D u\cdot e-l+\rho|D u|^2)_+
$$
with $(f)_+:=\max\{0,f\}$.
In the region $\Omega_+:=\{(x,t)\in Q_1:w>0\}$, we arrive at
\begin{align}
\label{3-3}
\partial_t w&=a_{ij}(x,t,D u)w_{ij}+a_{ij,m}(x,t,D u)u_{ij}w_m+D a\cdot(e+2\rho D u) b_{ij}(D u)u_{ij} \nonumber\\
&\quad-2\rho a_{ij}(x,t,D u)u_{ki}u_{kj}.
\end{align}
Observe that
\begin{align*}
&\quad a_{ij,m}(x,t,\eta)\\
&=\big[q(|\eta|^2+\varepsilon^2)^{\frac{q}{2}-1}\eta_m+sa(x,t)(|\eta|^2+\varepsilon^2)^{\frac{s}{2}-1}\eta_m\big]
\left(\delta_{ij}+(p-2)\frac{\eta_i \eta_j}{|\eta|^2+\varepsilon^2}\right)\\
&\quad+\big[(|\eta|^2+\varepsilon^2)^\frac{q}{2}+a(x,t)(|\eta|^2+\varepsilon^2)^\frac{s}{2}\big]
(p-2)\left(\frac{\delta_{im}\eta_j+\delta_{jm}\eta_i}{|\eta|^2+\varepsilon^2}-\frac{2\eta_i\eta_j\eta_m}{(|\eta|^2+\varepsilon^2)^2}\right).
\end{align*}
Due to $|D u|>\frac{l}{2}$ in $\Omega_+$ and $|D u|\leq1$, it follows that, in $\Omega_+$,
\begin{equation}
\label{3-4}
|a_{ij,m}(x,t,D u)|\leq\begin{cases}C l^{-1} &{ \text{ if  }  q\geq 0},\\[2mm]
Cl^{q-1} &{ \text{ if} -1<q<0},
\end{cases}
\end{equation}
where $C$ depends only on $p,q,s,a^+$. We then calculate
\begin{align}
\label{3-5}
&\quad D a\cdot(e+2\rho D u)b_{ij}(D u)u_{ij} \nonumber\\
&\leq(1+2\rho)H(|D u|^2+\varepsilon^2)^\frac{s}{2}\left|\mathrm{tr}(D^2u)+(p-2)(|D u|^2+\varepsilon^2)^{-1}\langle D^2uD u,D u\rangle\right| \nonumber\\
&\leq3(n+|p-2|)H\|D^2u\|(|D u|^2+\varepsilon^2)^\frac{s}{2} \nonumber\\
&\leq\begin{cases}CH\|D^2u\| &{ \text{if } s\geq 0},\\[2mm]
CH\|D^2u\|l^s &{\text{if } -1<s<0},
\end{cases}
\end{align}
where $H:=\|D a\|_{L^\infty(Q_1)}$ and $C$ depends only on $n,p,s$. We next estimate the term $2\rho a_{ij}(x,t,D u)u_{ki}u_{kj}$ as
\begin{align}
\label{3-6}
&\quad 2\rho\big[(|D u|^2+\varepsilon^2)^\frac{q}{2}+a(x,t)(|D u|^2+\varepsilon^2)^\frac{s}{2}\big]\left(\delta_{ij}+(p-2)\frac{u_i u_j}{|D u|^2+\varepsilon^2}\right)u_{ki}u_{kj} \nonumber\\
&=2\rho\big[(|D u|^2+\varepsilon^2)^\frac{q}{2}+a(x,t)(|D u|^2+\varepsilon^2)^\frac{s}{2}\big]\left(\|D^2u\|^2+(p-2)\frac{|D^2u D u|^2}{|D u|^2+\varepsilon^2}\right) \nonumber\\
&\geq2\min\{1,p-1\}\rho(|D u|^2+\varepsilon^2)^\frac{q}{2}\|D^2u\|^2 \nonumber\\
&\geq\begin{cases}C\rho l^q\|D^2u\|^2 &{ \text{if } q\geq 0},\\[2mm]
C\rho\|D^2u\|^2 &{\text{if } -1<q<0},
\end{cases}
\end{align}
where $C$ depends only on $p,q$. Hence merging these estimates \eqref{3-3}--\eqref{3-6} and using Cauchy-Schwarz inequality, for $q\geq0$, in $\Omega_+$ we derive
\begin{align*}
\partial_tw&\leq a_{ij}(x,t,D u)w_{ij}+Cl^{-1}|D w|\sum^n_{i,j}|u_{ij}|+CH\|D^2u\|-Cl^{q+1}\|D^2u\|^2\\
&\leq a_{ij}(x,t,D u)w_{ij}+\epsilon\|D^2u\|^2+\frac{C^2}{\epsilon l^2}|D w|^2+\epsilon\|D^2u\|^2+\frac{C^2H^2}{\epsilon}-Cl^{q+1}\|D^2u\|^2\\
&\leq a_{ij}(x,t,D u)w_{ij}+Cl^{-q-3}|D w|^2+\widehat{C}l^{-q-1}H^2,
\end{align*}
by choosing $\epsilon=\frac{1}{2}Cl^{q+1}$. For $-1<q<0\leq s$, in $\Omega_+$  we have
\begin{align*}
\partial_tw&\leq a_{ij}(x,t,D u)w_{ij}+Cl^{q-1}|D w|\sum^n_{i,j}|u_{ij}|+CH\|D^2u\|-C\rho\|D^2u\|^2\\
&\leq a_{ij}(x,t,D u)w_{ij}+\epsilon\|D^2u\|^2+\frac{C^2}{\epsilon l^{2(1-q)}}|D w|^2+\epsilon\|D^2u\|^2+\frac{C^2H^2}{\epsilon}-C\rho\|D^2u\|^2\\
&\leq a_{ij}(x,t,D u)w_{ij}+Cl^{2q-3}|D w|^2+\widehat{C}l^{-1}H^2,
\end{align*}
by letting $\epsilon=\frac{1}{2}C\rho$. Finally, for $-1<s<0$, in $\Omega_+$ we get
\begin{align*}
\partial_tw&\leq a_{ij}(x,t,D u)w_{ij}+Cl^{q-1}|D w|\sum^n_{i,j}|u_{ij}|+CH\|D^2u\|l^s-C\rho\|D^2u\|^2\\
&\leq a_{ij}(x,t,D u)w_{ij}+\epsilon\|D^2u\|^2+\frac{C^2}{\epsilon l^{2(1-q)}}|D w|^2+\epsilon\|D^2u\|^2+\frac{C^2H^2}{\epsilon l^{-2s}}-C\rho\|D^2u\|^2\\
&\leq a_{ij}(x,t,D u)w_{ij}+Cl^{2q-3}|D w|^2+\widehat{C}l^{2s-1}H^2,
\end{align*}
by selecting $\epsilon=\frac{1}{2}C\rho$ again. In the previous formulations, the constants $C$ and $\widehat{C}$ separately depend on $n,p,q,s,a^+$ and $n,p,q,s$. Therefore, we can see that $w$ satisfies in the viscosity sense that
\begin{equation}
\label{3-7}
\partial_tw\leq\tilde{a}_{ij}w_{ij}+C_1(l)|D w|^2+C_2(l)H^2,
\end{equation}
where
\begin{equation*}
\tilde{a}_{ij}(x,t)=\begin{cases}a_{ij}(x,t,D u(x,t)) &{ \text{if } (x,t)\in\Omega_+},\\[2mm]
\delta_{ij} &{ \text{elsewhere}}
\end{cases}
\end{equation*}
and
\begin{equation*}
C_1(l)=\begin{cases}c_1l^{-q-3} &{ \text{if } q\geq 0},\\[2mm]
c_1l^{2q-3} &{ \text{if } -1<q<0},
\end{cases}
\end{equation*}

\begin{equation*}
C_2(l)=\begin{cases}c_2l^{-q-1} &{ \text{if } q\geq 0},\\[2mm]
c_2l^{-1} &{ \text{if } -1<q<0\leq s},\\[2mm]
c_2l^{2s-1} &{ \text{if } -1<s<0}
\end{cases}
\end{equation*}
with $c_1$ depending on $n,p,q,s,a^+$ and $c_2$ depending only on $n,p,q,s$. Here we notice that, since $l\in(\frac{1}{2},1)$, the coefficient $\tilde{a}_{ij}(x,t)$ is uniformly parabolic, i.e., there exist two constants $0<\lambda\leq\Lambda<\infty$ such that $\lambda I\leq \tilde{a}_{ij}(x,t)\leq \Lambda I$ for all $(x,t)\in Q_1$. Indeed, we find that $\lambda$ depends only on $p,q$ and $\Lambda$ depends on $p,q,s$ and $a^+$. For simplicity, \eqref{3-7} is reformulated as
$$
\partial_tw\leq\tilde{a}_{ij}w_{ij}+C_1(l)|D w|^2+\overline{c},
$$
with $\overline{c}\equiv C_2(l)H^2$. Set
$$
W=1-l+\rho+\overline{c},
$$
and
$$
U=\frac{1}{\nu}\left(1-e^{\nu(w-\overline{c}t-W)}\right) \quad\text{with }  \nu>0.
$$
We could determine $\nu>0$, which depends on $n,p,q,s, a^+$ and $l$, such that
$$
\partial_tU\geq\tilde{a}_{ij}U_{ij} \quad\text{in } Q_1
$$
in the viscosity sense. Obviously, $U\geq0$ in $Q_1$.

If $D u\cdot e\leq l$, then it follows from the assumption in the statement that
$$
|\{(x,t)\in Q_1:U\geq \nu^{-1}(1-e^{\nu(l-1)})\}|>\mu|Q_1|.
$$
Thereby, we can infer from Proposition 2.3 in \cite{JS17} that there are two constants $\tau_0, \gamma_0>0$ such that
$$
U\geq \nu^{-1}(1-e^{\nu(l-1)})\gamma_0 \quad\text{in } Q_{\tau_0},
$$
where $\tau_0$ depends only on $n,\mu$ and $\gamma_0$ depends on $n,\mu,p,q,s$ and $a^+$. Moreover, since $w-\overline{c}t\leq W$, we could readily get
$$
U\leq W-w+\overline{c}t.
$$
Thus, in $Q_{\tau_0}$ we have
$$
D u\cdot e+\rho|D u|^2\leq1+\rho-\nu^{-1}(1-e^{\nu(l-1)})\gamma_0+\overline{c}+\overline{c}t,
$$
which leads to
$$
D u\cdot e+\rho(D u\cdot e)^2\leq1+\rho-\nu^{-1}(1-e^{\nu(l-1)})\gamma_0+\overline{c}.
$$
It yields that
$$
D u\cdot e\leq\frac{-1+\sqrt{1+4\rho(1+\rho-\nu^{-1}(1-e^{\nu(l-1)})\gamma_0+\overline{c})}}{2\rho} \quad\text{in }  Q_{\tau_0}.
$$
If
$$
H<(C_2^{-1}(l)\nu^{-1}(1-e^{\nu(l-1)})\gamma_0)^\frac{1}{2}=:\kappa (\leq1),
$$
which implies that $\overline{c}<\nu^{-1}(1-e^{\nu(l-1)})\gamma_0$, then there holds that
$$
D u\cdot e\leq 1-\delta \quad\text{in } Q_{\tau_0},
$$
where $\delta>0$ depends on $n,p,q,s,a^+,\mu$ and $l$. Briefly, when the upper bound on $|D a(x,t)|$ is small enough depending on $n,p,q,s,a^+,\mu$ and $l$, we arrive at $D u\cdot e\leq 1-\delta$ in $Q_{\tau_0}$. Finally, we select
\begin{equation*}
\tau=\begin{cases}\tau_0(1-\delta)^\frac{q}{2} &{ \text{if } q\geq 0},\\[2mm]
\tau_0 &{ \text{if } -1<q<0}
\end{cases}
\end{equation*}
such that $Q_\tau^{1-\delta}\subset Q_{\tau_0}$. We now complete the proof.
\end{proof}

\begin{remark}
Observe that the selection of $\tau$ and $\delta$ above implies that
$$
\tau<(1-\delta)^\frac{q}{2} \quad \text{when } q\geq 0.
$$
In the rest of this work, we shall choose such smaller $\tau$ that
\begin{equation}
\label{3-8}
\tau<(1-\delta)^{1+q} \quad \text{for any } q\geq -1.
\end{equation}
\end{remark}

If Lemma \ref{lem3-1} holds true in all directions $e \in \mathbb{S}^{n-1}$, then it in effect indicates a reduction in the oscillation of $D u$ in a smaller parabolic cylinder. This content is stated by the forthcoming corollary.

\begin{corollary}
\label{cor3-2}
Let the conditions \eqref{1-1} and \eqref{1-2} be in force. Suppose that $u$ is a smooth solution of \eqref{2-1} with $\varepsilon\in(0,1)$ such that $|D u|\leq1$ in $Q_1$. For each $l\in(0,1)$ and $\mu>0$, if $0\leq a(x,t)\in C^1(Q_1)$ and $\|D a\|_{L^\infty(Q_1)}\leq \kappa$, where $\kappa\in(0,1]$ is a small enough quantity depending on $n,p,q,s,a^+,\mu$ and $l$, then there exist $\tau\in(0,\frac{1}{4})$  and $\delta>0$ that both depend upon $n,p,q,s,a^+,\mu$ and $l$ such that for all nonnegative integer $k\leq \log\varepsilon/\log(1-\delta)$ if
\begin{equation}
\label{3-9}
\left|\left\{(x,t)\in Q_{\tau^i}^{(1-\delta)^i}: D u\cdot e\leq l(1-\delta)^i\right\}\right|>\mu\left|Q_{\tau^i}^{(1-\delta)^i}\right|
\end{equation}
for all $e\in \mathbb{S}^{n-1}$ and $i=0,1,2,\cdots,k$, then one has
$$
|D u|<(1-\delta)^{i+1}
$$
in $Q_{\tau^{i+1}}^{(1-\delta)^{i+1}}$ for $i=0,1,2,\cdots,k$.
\end{corollary}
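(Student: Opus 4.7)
The plan is to argue by induction on $i$. For the base case $i=0$, hypothesis \eqref{3-9} reads
$|\{(x,t) \in Q_1 : Du \cdot e \leq l\}| > \mu |Q_1|$ for every $e \in \mathbb{S}^{n-1}$. Replacing $l$ by $l' := \max\{l, 3/4\} \in (1/2,1)$ only enlarges this set, so Lemma \ref{lem3-1} applies in every direction $e$ and gives $Du \cdot e < 1-\delta$ on $Q_\tau^{1-\delta}$; varying $e$ over $\mathbb{S}^{n-1}$ yields $|Du| < 1-\delta$ in $Q_\tau^{1-\delta}$.

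For the inductive step, suppose $|Du| < (1-\delta)^i$ on $Q_{\tau^i}^{(1-\delta)^i}$ and that \eqref{3-9} holds at level $i$. Mimicking the rescaling discussed around \eqref{3-1}, set $\rho := (1-\delta)^i$, $r := \tau^i$, and
$$
v(x,t) := \frac{1}{\rho r}\, u(rx,\rho^{-q} r^2 t), \qquad (x,t) \in Q_1.
$$
Then $v$ solves \eqref{3-1} in $Q_1$ with $\bar{\varepsilon} = \varepsilon \rho^{-1}$ and $\bar{a}(x,t) = \rho^{s-q} a(rx,\rho^{-q}r^2 t)$; furthermore $|Dv| \leq 1$ by the inductive hypothesis. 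The constraint $i \leq k \leq \log\varepsilon/\log(1-\delta)$ forces $\bar{\varepsilon} \leq 1$; the key structural inequality $s \geq q$ combined with $\rho \leq 1$ keeps $0 \leq \bar{a} \leq a^+$ uniformly in $i$; and
$$
\|D\bar{a}\|_{L^\infty(Q_1)} \leq \rho^{s-q}\, r\, \|Da\|_{L^\infty(Q_1)} \leq \kappa,
$$
so all hypotheses of Lemma \ref{lem3-1} continue to hold for $v$ with the very same constants $\tau$, $\delta$, $\kappa$.

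Using $Du(rx,\rho^{-q}r^2 t) = \rho\, Dv(x,t)$, condition \eqref{3-9} at level $i$ transforms, under the change of variables $(y,s) = (rx,\rho^{-q}r^2 t)$, into
$|\{(x,t) \in Q_1 : Dv \cdot e \leq l\}| > \mu |Q_1|$ for every $e \in \mathbb{S}^{n-1}$ (the Jacobian of the change of variables is the same on the subset and on the full cylinder, so the measure ratio is preserved). Lemma \ref{lem3-1} applied to $v$ in every direction $e$ (again using $l' = \max\{l,3/4\}$ if $l \leq 1/2$) therefore yields $|Dv| < 1-\delta$ on $Q_\tau^{1-\delta}$. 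Unwinding the scaling gives $|Du| < \rho(1-\delta) = (1-\delta)^{i+1}$ on $Q_{\tau^{i+1}}^{(1-\delta)^{i+1}}$, and the inclusion $Q_{\tau^{i+1}}^{(1-\delta)^{i+1}} \subset Q_{\tau^i}^{(1-\delta)^i}$, needed for the iteration to remain inside the region where the previous gradient bound is known, is precisely guaranteed by \eqref{3-8}.

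The main obstacle is verifying that the rescaled problem still fits into Lemma \ref{lem3-1} with constants independent of $i$. The delicate point is the coefficient $\bar{a}$: without the assumption $s \geq q$, the factor $\rho^{s-q}$ would blow up as $\rho \to 0$ and destroy the uniform $L^\infty$-bound on $\bar{a}$; it is exactly the ordering in \eqref{1-2} that makes the iteration self-similar. The bound on $\|D\bar{a}\|_{L^\infty}$ improves along the iteration (both $\tau^i$ and $\rho^{s-q}$ are $\leq 1$), and the restriction on $k$ is what keeps the regularization parameter $\bar{\varepsilon}$ bounded by $1$, which in turn is what is needed to apply Lemma \ref{lem3-1} to \eqref{3-1}.
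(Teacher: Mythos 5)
Your proof is correct and follows essentially the same approach as the paper's: induction on $i$, rescaling $v(x,t)=\tfrac{1}{\rho r}u(rx,\rho^{-q}r^2t)$ with $\rho=(1-\delta)^i$, $r=\tau^i$, verifying the rescaled coefficient $\bar a$ and regularization parameter $\bar\varepsilon$ still satisfy the hypotheses of Lemma \ref{lem3-1}, and rescaling back. You spell out a few details the paper leaves implicit (the $l'=\max\{l,3/4\}$ adjustment to reconcile $l\in(0,1)$ with the $l\in(\tfrac12,1)$ hypothesis of Lemma \ref{lem3-1}, and the role of \eqref{3-8} in guaranteeing the cylinder inclusions), but these are refinements of the same argument, not a different route.
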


\begin{remark}
Notice that we could further impose on $\delta$ that $\delta<\frac{1}{2}$ and $\delta<1-\tau$.
\end{remark}

\begin{proof}
Argue by induction. If $i=0$, it follows from Lemma \ref{lem3-1} that $D u\cdot e<1-\delta$ in $Q_\tau^{1-\delta}$ for all $e\in \mathbb{S}^{n-1}$, which leads to $|D u|<1-\delta$ in $Q_\tau^{1-\delta}$. We now suppose that this claim holds true for $i=0,1,2,\cdots,k-1$. Next, we shall verify it for $i=k$. Define
$$
w(x,t)=\frac{1}{\tau^k(1-\delta)^k}u(\tau^kx,\tau^{2k}(1-\delta)^{-kq}t),  \quad  (x,t)\in Q_1.
$$
We can readily check that $w$ solves in the viscosity sense
\begin{align*}
\partial_tw=\big[(|D w|^2+\hat{\varepsilon}^2)^\frac{q}{2}+\hat{a}(x,t)(|D w|^2+\hat{\varepsilon}^2)^\frac{s}{2}\big]
\left(\delta_{ij}+(p-2)\frac{w_i w_j}{|D w|^2+\hat{\varepsilon}^2}\right)w_{ij}
\end{align*}
in $Q_1$, where
$$
\hat{a}(x,t)=(1-\delta)^{k(s-q)}a(\tau^kx,\tau^{2k}(1-\delta)^{-qk}t) \quad \text{and} \quad\hat\varepsilon^2=\frac{\varepsilon^2}{(1-\delta)^{2k}}.
$$
Moreover, there holds that
$$
|D w|\leq1 \quad\text{in } Q_1
$$
and
$$
|\{(x,t)\in Q_1:D w\cdot e\leq l\}|>\mu|Q_1| \quad\text{for all } e\in \mathbb{S}^{n-1},
$$
by the induction assumption. Additionally, note that $\varepsilon\leq(1-\delta)^k$. Applying Lemma \ref{lem3-1}, we obtain
$$
D w\cdot e\leq1-\delta \quad\text{in } Q_\tau^{1-\delta} \quad\text{for all } e\in \mathbb{S}^{n-1},
$$
which implies that $|D w|\leq 1-\delta$ in $Q_\tau^{1-\delta}$. Rescaling back, it yields that
$$
|D u|<(1-\delta)^{k+1} \quad\text{in }  Q_{\tau^{k+1}}^{(1-\delta)^{k+1}}.
$$
We conclude the proof.
\end{proof}

\begin{remark}
\label{rem3-2-1}
In order to derive the reduction of oscillation of $|D u|$, we ask that $\|D a\|_{L^\infty(Q_1)}$ is smaller than $\kappa$ ($\kappa\leq1$ sufficiently small) in Lemmas \ref{lem3-1} and \ref{cor3-2}. In fact, we can suppose initially that $\|D_{x,t} a\|_{L^\infty(Q_1)}\leq \kappa$. These two lemmas still hold, when $\|D a\|_{L^\infty(Q_1)}\leq\kappa$ is substituted by $\|D_{x,t}a\|_{L^\infty(Q_1)}\leq \kappa$ in Lemmas \ref{lem3-1} and \ref{cor3-2}.
\end{remark}

If the iteration above can be carried out infinitely, then we will readily conclude the H\"{o}lder continuity of $D u$ at the origin $(0,0)$. Nevertheless, unless $D u(0,0)=0$, the iteration shall stop unavoidably at some step, that is, for some nonnegative integer $k$ the condition \eqref{3-9} is not true in some direction $e\in \mathbb{S}^{n-1}$. In this case, we will infer that $u$ can be approximated by a linear function, then making use of a conclusion on regularity of small perturbation solutions from \cite{Wang13} to prove the H\"{o}lder regularity for $D u$. Now we first study how the solution is close to a linear function.

\begin{lemma}
\label{lem3-2}
Let the conditions \eqref{1-1} and \eqref{1-2} be in force. Suppose that $u\in C(\overline{Q_1})$ is a smooth solution to \eqref{2-1} with $\varepsilon\in(0,1)$ such that $|D u|\leq M$ in $Q_1$. Let $0\leq a(x,t)\leq a^+$. If for any $t\in[-1,0]$ it holds that
$$
\mathrm{osc}_{B_1}u(\cdot,t)\leq L
$$
with $L$ being a positive constant, then
\begin{equation*}
\mathrm{osc}_{Q_1}u(x,t)\leq \begin{cases} CL &{ \text{if }  q\geq 0},\\[2mm]
C(L+L^{1+q}+L^{1+s}) &{ \text{if } -1<q<0},
\end{cases}
\end{equation*}
where $C>0$ depends upon $n,p,q,s,a^+$ and $M$.
\end{lemma}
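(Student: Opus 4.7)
The plan is to argue by barrier-and-comparison, following the construction used in the proof of Lemma~\ref{lem2-2}, with the spatial oscillation $L$ taking over the role that $\|Du\|_{L^\infty}$ played there at the initial-time slice. The starting observation is that, by the spatial-oscillation hypothesis and the triangle inequality, for any $(x_1,t_1),(x_2,t_2)\in Q_1$ and any fixed reference point $x_0\in B_1$,
\[
|u(x_1,t_1)-u(x_2,t_2)|\le 2L+|u(x_0,t_1)-u(x_0,t_2)|,
\]
so the problem reduces to controlling the time oscillation of $u(x_0,\cdot)$ at a single spatial point. To bound $u(x_0,t)-u(x_0,t_0)$ from above I would introduce the candidate
\[
\varphi(x,t)=u(x_0,t_0)+2L+L_1(t-t_0)+L_2|x-x_0|^\beta
\]
on a cylinder $\bar B_\rho(x_0)\times[t_0,0]\subset Q_1$, taking $\beta=2$ when $q\ge 0$ and $\beta=(q+2)/(q+1)>2$ when $-1<q<0$, exactly as in Lemma~\ref{lem2-2}; this choice of $\beta$ is what prevents the singular coefficient $|D\varphi|^q$ from forcing a blow-up at $x=x_0$. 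A direct computation of $D\varphi$ and $D^2\varphi$ reduces the supersolution inequality to $L_1\ge C(L_2^{1+q}+L_2^{1+s})$ for some $C=C(n,p,q,s,a^+)$.

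The $t=t_0$ boundary condition $\varphi(\cdot,t_0)\ge u(\cdot,t_0)$ is guaranteed by the $+2L$ term together with the spatial-oscillation hypothesis (any $L_2\ge 0$ suffices here). The lateral condition $\varphi\ge u$ on $\partial B_\rho(x_0)\times[t_0,0]$ is arranged via the Lipschitz estimate $|u(x,t)-u(x_0,t)|\le M\rho$ combined with the spatial oscillation at time $t$; this forces $L_2\rho^\beta\gtrsim M\rho$, i.e.\ $L_2\sim M\rho^{1-\beta}$. With $\rho$ chosen as a fixed universal constant, the comparison principle (Proposition~\ref{pro2-4}) yields $u\le\varphi$ throughout the cylinder, and evaluating at $x=x_0$ gives
\[
u(x_0,t)-u(x_0,t_0)\le 2L+L_1\le 2L+C\bigl(L_2^{1+q}+L_2^{1+s}\bigr).
\]
A symmetric argument with the lower barrier $-\varphi$ produces the matching one-sided bound.

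The dichotomy in the statement now tracks the absorption of the correction $L_2^{1+q}+L_2^{1+s}$. When $q\ge 0$, the a priori bound $L\le 2M$ coming from the Lipschitz hypothesis forces both $L^{1+q}$ and $L^{1+s}$ to be comparable to $L$ after inflating the constant by powers of $M$, so the final estimate collapses to $CL$. When $-1<q<0$, the exponent $1+q<1$ genuinely separates $L^{1+q}$ from $L$ for small $L$, and likewise for $L^{1+s}$ if $-1<s<0$, so these correction terms remain visible in the conclusion, matching the second case of the statement. The main technical obstacle is the lateral boundary condition: because the constant $C$ in the conclusion must depend only on $M$ and not on $\|u\|_{L^\infty(Q_1)}$, one is forbidden from simply dominating $u$ on $\partial B_\rho(x_0)$ by its $L^\infty$-norm and must instead rely on the Lipschitz bound $M\rho$ alone, which is exactly what drives the choice of $L_2$ above and ultimately explains the different form of the estimate in the degenerate and singular regimes.
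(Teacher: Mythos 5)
There is a genuine gap in how you handle the lateral boundary, and it propagates into an incorrect final bound. You ask that $\varphi\ge u$ on $\partial B_\rho(x_0)\times[t_0,0]$ be ``arranged'' by the Lipschitz bound $|u(x,t)-u(x_0,t)|\le M\rho$ together with the spatial oscillation, forcing $L_2\rho^\beta\gtrsim M\rho$. But on the lateral boundary
\[
u(x,t)-\bigl(u(x_0,t_0)+2L+L_1(t-t_0)+L_2\rho^\beta\bigr)
=\bigl[u(x,t)-u(x_0,t)\bigr]+\bigl[u(x_0,t)-u(x_0,t_0)\bigr]-2L-L_1(t-t_0)-L_2\rho^\beta,
\]
and while the first bracket is controlled by $M\rho$ (or by $L$), the second bracket $u(x_0,t)-u(x_0,t_0)$ is \emph{precisely the quantity the lemma is trying to estimate}. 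You cannot bound it a priori in order to apply the comparison principle on the cylinder: the argument is circular. Moreover, with the choice $L_2\sim M\rho^{1-\beta}$ at fixed $\rho$, the barrier constant $L_1\ge C(L_2^{1+q}+L_2^{1+s})\sim C(M^{1+q}+M^{1+s})$ is independent of $L$, so your conclusion reads $u(x_0,t)-u(x_0,t_0)\le 2L+C(M)$, a constant that does not tend to $0$ with $L$. This is strictly weaker than the claimed bound $CL$ (for $q\ge0$) or $C(L+L^{1+q}+L^{1+s})$ (for $-1<q<0$). In the last paragraph you silently replace $L_2^{1+q}$ by $L^{1+q}$ to make the absorption argument work, but that substitution is not justified by your choice of $L_2$.

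The fix, which is what the paper actually does (delegating details to Lemma~4.4 of \cite{IJS19}), is to take the barrier amplitude proportional to $L$, not to $M$: work on $Q_1=B_1\times(-1,0]$ with $\overline w(x,t)=\overline b+\Lambda L^{1+q}t+2L|x|^\beta$ (and $\beta=2$ with $L_1\sim L$ when $q\ge0$), where $\overline b$ is chosen so that $\overline w(\cdot,-1)$ touches $u(\cdot,-1)$ from above. The lateral boundary is then \emph{not} checked a priori; instead one argues that if $\min_{\overline{Q_1}}(\overline w-u)<0$, that minimum cannot occur at $|x|=1$, because the barrier gains exactly $2L$ going from $x=0$ to $|x|=1$ while $u(\cdot,t)$ oscillates by at most $L$, so $\overline w-u$ is strictly smaller at $x=0$ at the same time. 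The initial slice $t=-1$ is fine by construction, and interior minima are excluded by the strict supersolution property. This yields $\mathrm{osc}_{Q_1}u\le 2\Lambda L^{1+q}+5L$, with $\Lambda$ bounded (after using $L\lesssim M$) by a constant depending on $M$, which is exactly $C(L+L^{1+q}+L^{1+s})$, collapsing to $CL$ when $q\ge0$. Your final ``absorption'' discussion is sound once $L_2\sim L$; it is the choice of $L_2$ and the handling of the lateral boundary that need to be corrected.
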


\begin{proof}
This proof is similar to that of Lemma 4.4 in \cite{IJS19}. We only give the sketch of proof here.
If $q\geq 0$, for $a_{ij}$ defined in \eqref{3-2} we can find that
$$
|a_{ij}|\leq \overline{\Lambda}:=\big[(1+M^2)^\frac{q}{2}+a^+(1+M^2)^\frac{s}{2}\big]\max\{1,p-1\}.
$$
So this claim can be concluded by the same proof of Lemma 4.3 in \cite{JS17}.

When $-1<q<0$, we define the comparison functions as follows
$$
\overline{w}(x,t)=\overline{b}+\Lambda L^{1+q}t+2L|x|^\beta,
$$
$$
\underline{w}(x,t)=\underline{b}-\Lambda L^{1+q}t-2L|x|^\beta,
$$
where $\beta=\frac{q+2}{q+1}$ and $\Lambda$ will be determined later.  Here $\overline{b}$ is chosen so that $\overline{w}(\cdot,-1)\geq u(\cdot,-1)$ in $B_1$ and $\overline{w}(\overline{x},-1)= u(\overline{x},-1)$ at some point $\overline{x}\in \overline{B_1}$. Correspondingly, $\underline{b}$ is chosen so that $\underline{w}(\cdot,-1)\leq u(\cdot,-1)$ in $B_1$ and $\underline{w}(\underline{x},-1)= u(\underline{x},-1)$ at some point $\underline{x}\in \overline{B_1}$. Then $\overline{b}-\underline{b}\leq L+2\Lambda L^{1+q}$.
By direct computations,
\begin{align*}
&\quad\big[(|D \overline{w}|^2+\varepsilon^2)^\frac{q}{2}+a(x,t)(|D \overline{w}|^2+\varepsilon^2)^\frac{s}{2}\big]\left(\delta_{ij}+(p-2)\frac{\overline{w}_i \overline{w}_j}{|D \overline{w}|^2+\varepsilon^2}\right)\overline{w}_{ij}\\
&\leq\big[((2L\beta|x|^{\beta-1})^2+\varepsilon^2)^\frac{q}{2}+a(x,t)((2L\beta|x|^{\beta-1})^2
+\varepsilon^2)^\frac{s}{2}\big]2pn\beta(\beta-1)L|x|^{\beta-2}\\
&\leq(2\beta)^{q+1}(\beta-1)pn\big[1+a^+(1+(2L\beta)^2)^\frac{s-q}{2}\big]L^{q+1}.
\end{align*}
Hence, if we choose
$$
\Lambda=(2\beta)^{q+1}(\beta-1)pn\big[1+a^+(1+(2L\beta)^2)^\frac{s-q}{2}\big]+1,
$$
then $\overline{w}$ is a strict supersolution to \eqref{2-1}. Analogously, $\underline{w}$ is a strict subsolution. Next, we can prove that $\overline{w}\geq u\geq\underline{w}$ in $Q_1$. For the details, one can see \cite[Lemma 4.4]{IJS19}. Finally we have
$$
\mathrm{osc}_{Q_1}u(x,t)\leq \overline{b}-\underline{b}+4L\leq 2\Lambda L^{q+1}+5L=:C(L+L^{1+q}+L^{1+s}).
$$
The proof now is completed.
\end{proof}

\begin{lemma}
\label{lem3-3}
Let $e\in \mathbb{S}^{n-1}$ and $0<\sigma<\frac{1}{8}$. Let the conditions \eqref{1-1} and \eqref{1-2} be in force. Suppose that $u\in C(\overline{Q_1})$ is a smooth solution to \eqref{2-1} with $\varepsilon\in(0,1)$ in $Q_1$. Let $0\leq a(x,t)\leq a^+$. If for any $t\in[-1,0]$ it holds that
$$
\mathrm{osc}_{x\in B_1}(u(x,t)-x\cdot e)\leq \sigma,
$$
then one has
$$
\mathrm{osc}_{(x,t)\in Q_1}(u(x,t)-x\cdot e)\leq C\sigma,
$$
where $C>0$ depends only on $n,p,q,s$ and $a^+$.
\end{lemma}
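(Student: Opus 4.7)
I would adapt the barrier strategy of Lemma 3.2 to the linear deviation $v := u - x \cdot e$. The guiding observation is that every function of the form $x \cdot e + c$ (with $c$ a constant) is itself an exact solution of \eqref{2-1}, since its gradient equals the unit vector $e$ and its Hessian vanishes. Thus, viewed as a perturbation around $x \cdot e$, equation \eqref{2-1} becomes effectively uniformly parabolic with bounded coefficients, so one expects the spatial oscillation of $v$ to control the full space-time oscillation.

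Concretely, I would construct explicit super- and subsolutions of \eqref{2-1} of the form
\begin{equation*}
\overline{w}(x,t) = x \cdot e + M(-1) + C_1 \sigma + C_2 \sigma (t + 1) + C_1 \sigma |x|^2,
\end{equation*}
\begin{equation*}
\underline{w}(x,t) = x \cdot e + m(-1) - C_1 \sigma - C_2 \sigma (t + 1) - C_1 \sigma |x|^2,
\end{equation*}
with $M(-1) := \sup_{\overline{B_1}} v(\cdot,-1)$, $m(-1) := \inf_{\overline{B_1}} v(\cdot,-1)$, and constants $C_1 > 1$ and $C_2 > 0$ to be fixed depending only on $n,p,q,s,a^+$. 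Direct computation gives $D\overline{w} = e + 2 C_1 \sigma x$; since $\sigma < \tfrac{1}{8}$, one has $|D\overline{w}|^2 \in [\tfrac{1}{2}, 2]$ throughout $Q_1$, forcing $(|D\overline{w}|^2 + \varepsilon^2)^{q/2}$ and $a(x,t)(|D\overline{w}|^2 + \varepsilon^2)^{s/2}$ to lie between positive constants depending only on $q, s, a^+$. Since $D^2 \overline{w} = 2 C_1 \sigma I$, the right-hand side of \eqref{2-1} at $\overline{w}$ is bounded above by $C_* C_1 \sigma$ for some $C_* = C_*(n,p,q,s,a^+)$, while $\partial_t \overline{w} = C_2 \sigma$; choosing $C_2 > C_* C_1$ makes $\overline{w}$ a \emph{strict} supersolution, and the mirror choice makes $\underline{w}$ a strict subsolution.

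Comparison on $\overline{B_1} \times \{-1\}$ is immediate: $\overline{w}(x,-1) - u(x,-1) = M(-1) - v(x,-1) + C_1 \sigma (1 + |x|^2) \geq 0$, and similarly $\underline{w}(x,-1) \leq u(x,-1)$. The delicate step is the lateral boundary $\partial B_1 \times [-1,0]$, where the hypothesis gives only pointwise-in-time control of $v$ up to an unknown shift $m(t) := \inf_{\overline{B_1}} v(\cdot,t)$. On $\partial B_1$ the inequality $\overline{w} \geq u$ reduces to $m(t) \leq M(-1) + (2 C_1 - 1 + C_2(t+1)) \sigma$, and I would close this via a continuity/bootstrap argument: on the set of $t \in [-1,0]$ where this bound holds throughout $[-1, t]$, the classical comparison principle applies; evaluating $\overline{w} \geq u$ at the interior point $x = 0$ yields $m(s) \leq v(0,s) \leq M(-1) + (C_1 + C_2(s+1)) \sigma$ for $s \leq t$, which is \emph{strictly} better than the bootstrap hypothesis since $C_1 > 1$. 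Hence this set is relatively open (by strict comparison and continuity of $m$), is closed (by continuity of $m$), and contains $-1$, so equals $[-1, 0]$. The symmetric bootstrap with $\underline{w}$ yields $m(t) \geq m(-1) - (C_1 + C_2)\sigma$.

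Combining both one-sided bounds gives $\mathrm{osc}_{t \in [-1,0]} m(t) \leq C \sigma$, and since $v(\cdot,t) \in [m(t), m(t) + \sigma]$ on $\overline{B_1}$ for each $t$, one concludes
\[
\mathrm{osc}_{(x,t) \in Q_1}(u - x \cdot e) \leq \sigma + \mathrm{osc}_{t \in [-1, 0]} m(t) \leq C \sigma,
\]
with $C$ depending only on $n, p, q, s, a^+$ through $C_1, C_2, C_*$. The main technical obstacle is precisely this lateral bootstrap: unlike Lemma 3.2, where the spatial oscillation hypothesis already furnishes a uniform reference level, here the level $m(t)$ may drift with $t$, so one must propagate the estimate forward in $t$ using the strict super/subsolution property together with continuity of $m$.
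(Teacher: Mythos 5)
Your proposal is correct and follows the same barrier-construction strategy as the paper's proof: both compare $u$ to super/subsolutions of the form $x\cdot e + b \pm(\Lambda\sigma t + c\sigma|x|^2)$ anchored at $t=-1$, exploiting that the barrier gradient stays near $e$ (since $\sigma<\frac18$), so the equation is uniformly parabolic with bounded coefficients along the barrier. The paper defers the lateral-boundary comparison to Lemma 4.5 of Imbert--Jin--Silvestre, whereas you spell it out via the continuity/bootstrap argument in $t$; the two routes are equivalent (your numerical claim $|D\overline w|^2\in[\tfrac12,2]$ should really be $|D\overline w|\in(\tfrac12,\tfrac32)$, but this is harmless).
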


\begin{proof}
Denote
$$
\overline{w}(x,t)=\overline{b}+x\cdot e+\Lambda \sigma t+2\sigma|x|^2,
$$
$$
\underline{w}(x,t)=\underline{b}+x\cdot e-\Lambda \sigma t-2\sigma|x|^2,
$$
with $\Lambda>0$ to be fixed later. Here, the choices of $\overline{b},\underline{b}$ are the same as that in the proof of Lemma \ref{lem3-2}. Then we get $\overline{b}-\underline{b}\leq (2\Lambda+1)\sigma$. Owing to $\sigma<\frac{1}{8}$, by simple calculation, we obtain
$$
\frac{1}{2}\leq |D \overline{w}(x,t)|, |D \underline{w}(x,t)|\leq \frac{3}{2}, \quad \text{for } (x,t)\in \overline{Q_1}.
$$
Therefore, it follows that
$$
a_{ij}(x,t,D \overline{w}(x,t))\leq A_0I
$$
and
$$
a_{ij}(x,t,D \underline{w}(x,t))\leq A_0I,
$$
where the notation $a_{ij}$ is from \eqref{3-2}, and the constant $A_0$ depends on $p,q,s,a^+$.

We next pick $\Lambda=5nA_0$ and then show that
$$
\underline{w}\leq u\leq \overline{w}
$$
in $Q_1$. For the details, we refer to \cite[Lemma 4.5]{IJS19}. Finally, we arrive at
$$
\mathrm{osc}_{(x,t)\in Q_1}(u(x,t)-x\cdot e)\leq \sup_{Q_1}(\overline{w}-x\cdot e)-\inf_{Q_1}(\underline{w}-x\cdot e)\leq \overline{b}-\underline{b}+4\sigma \leq (2\Lambda+5)\sigma.
$$
We now finish the proof.
\end{proof}

Now putting together Lemmas \ref{lem3-2} and \ref{lem3-3} with Lemma \ref{lem2-2}, we will conclude the following result which states that if $D u$ is close to a unit vector in a large portion of $Q_1$, then $u$ is close to some linear function. Since $\|D_{x,t} a(x,t)\|_{L^\infty(Q_1)}$ is assumed to be smaller than $\kappa\leq1$ in Lemmas \ref{lem3-1} and \ref{cor3-2} (see Remark \ref{rem3-2-1}), in the lemma below we will suppose $\|D_{x,t} a(x,t)\|_{L^\infty(Q_1)}\leq 1$ is in force for simplicity.

\begin{lemma}
\label{lem3-4}
Let the assumptions \eqref{1-1}--\eqref{1-3} be in force and let $\omega$ be a positive number. Suppose that $|D_{x,t} a(x,t)|\leq 1$ in $Q_1$. Assume that $u$ is a smooth solution of \eqref{2-1} with $0<\varepsilon<1$ satisfying $|D u|\leq 1$ in $Q_1$, and that for some $e\in \mathbb{S}^{n-1}$ and two quantities $\varepsilon_0,\varepsilon_1>0$, there holds that
$$
|\{(x,t)\in Q_1:|D u-e|>\varepsilon_0\}|\leq \varepsilon_1.
$$
Then if $\varepsilon_0,\varepsilon_1$ are small enough, there is a real number $d\in\mathbb{R}$ such that
$$
|u(x,t)-d-e\cdot x|\leq \omega
$$
for $(x,t)\in Q_{1/2}$. Here both $\varepsilon_0$ and $\varepsilon_1$ depend upon $n,p,q,s,a^-,a^+$ and $\omega$.
\end{lemma}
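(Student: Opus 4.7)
The plan is to set $v(x,t):=u(x,t)-e\cdot x$, so that $Dv=Du-e$ satisfies $|Dv|\le\varepsilon_0$ off a set of spacetime measure at most $\varepsilon_1$ and $|Dv|\le 2$ everywhere (one may first subtract $u(0,0)$ without altering \eqref{2-1}, and combine this with the standing Lipschitz bound to normalize $\|u\|_{L^\infty(Q_1)}$ so that the constants of Lemma~\ref{lem2-2} below become absolute). The overall strategy is first to establish a small spatial oscillation bound
$$
\sup_{t\in[-(3/4)^2,0]}\mathrm{osc}_{x\in B_{3/4}}v(\cdot,t)\le\sigma
$$
for $\sigma$ as small as we like, then to upgrade this to a full spacetime bound by applying Lemma~\ref{lem3-3} after a harmless parabolic dilation, and finally to take $d$ to be the midpoint of $v$ on $Q_{1/2}$.

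The first step is to isolate good time slices via Fubini's theorem: the set
$$
\mathcal{T}_{\mathrm{bad}}:=\bigl\{t\in[-1,0]:|\{x\in B_1:|Du(x,t)-e|>\varepsilon_0\}|>\sqrt{\varepsilon_1}\bigr\}
$$
has one-dimensional measure at most $\sqrt{\varepsilon_1}$, so every $t\in[-(3/4)^2,0]$ lies within distance $\sqrt{\varepsilon_1}$ of some good time $t_0\notin\mathcal{T}_{\mathrm{bad}}$. For such a $t_0$ I would bound $\mathrm{osc}_{x\in B_{3/4}}v(\cdot,t_0)$ by averaging the one-dimensional fundamental theorem of calculus along a tube of parallel segments: for $x,y\in B_{3/4}$ and a parameter $r>0$, write
$$
v(y+\xi,t_0)-v(x+\xi,t_0)=\int_0^1 Dv\bigl(x+\xi+s(y-x),t_0\bigr)\cdot(y-x)\,ds,
$$
split the integrand into its good part (contributing at most $\varepsilon_0|y-x|$) and its bad part (contributing at most $2|y-x|$ times the one-dimensional measure of the segment's intersection with the spatial bad set), and then average over $\xi\in B_r$. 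Fubini applied to the shear map $(\xi,s)\mapsto x+\xi+s(y-x)$ controls the bad contribution by $C\sqrt{\varepsilon_1}\,r^{-n}$, while the global Lipschitz bound absorbs the endpoint errors $|v(\cdot+\xi,t_0)-v(\cdot,t_0)|\le 2r$. Optimizing $r\sim\varepsilon_1^{1/(2n+2)}$ then yields $\mathrm{osc}_{x\in B_{3/4}}v(\cdot,t_0)\le C(\varepsilon_0+\varepsilon_1^{1/(2n+2)})$.

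The next step is to promote this to every $t\in[-(3/4)^2,0]$ via Lemma~\ref{lem2-2}, which applies because the hypothesis $|D_{x,t}a|\le1$ forces $a$ to be Lipschitz in $x$ with $C_{\mathrm{lip}}\le1$. For any such $t$ and the closest good $t_0$, the H\"older-in-time estimate yields $|v(x,t)-v(x,t_0)|=|u(x,t)-u(x,t_0)|\le C\varepsilon_1^{\alpha/2}$ for some $\alpha>0$, and hence
$$
\sup_{t\in[-(3/4)^2,0]}\mathrm{osc}_{x\in B_{3/4}}v(\cdot,t)\le C\bigl(\varepsilon_0+\varepsilon_1^{1/(2n+2)}+\varepsilon_1^{\alpha/2}\bigr)=:\sigma.
$$
Rescaling $\tilde u(y,\tau):=(4/3)u((3/4)y,(3/4)^2\tau)$, which still solves an equation of the form \eqref{2-1} in $Q_1$ with $|D\tilde u|\le1$ (the rescaled $\tilde\varepsilon$ coincides with $\varepsilon$ since $(4/3)\cdot(3/4)=1$), the function $\tilde v(y,\tau):=\tilde u(y,\tau)-e\cdot y=(4/3)v((3/4)y,(3/4)^2\tau)$ obeys $\mathrm{osc}_{y\in B_1}\tilde v(\cdot,\tau)\le(4/3)\sigma<1/8$ once $\varepsilon_0,\varepsilon_1$ are chosen small. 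Lemma~\ref{lem3-3} then gives $\mathrm{osc}_{Q_1}\tilde v\le C\sigma$, which unscales to $\mathrm{osc}_{Q_{1/2}}v\le\mathrm{osc}_{Q_{3/4}}v\le C'\sigma$. Finally, further shrinking $\varepsilon_0,\varepsilon_1$ so that $C'\sigma/2\le\omega$ and setting $d:=\tfrac12(\sup_{Q_{1/2}}v+\inf_{Q_{1/2}}v)$ closes the argument.

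I expect the main obstacle to be the single-time spatial oscillation estimate: integrating $Dv$ along a single line segment is inadequate because the one-dimensional measure of the bad set along any particular segment can be uncontrolled even when its $n$-dimensional measure is small, so the path-averaging trick together with the careful optimization of the tube radius $r$ against the Lipschitz endpoint error $2r$ is the essential ingredient. A secondary subtlety is that the constant in Lemma~\ref{lem2-2} depends on $\|u\|_{L^\infty(Q_1)}$, so one must use the standing Lipschitz bound (and a translation of $u$ by $u(0,0)$) to normalize this norm a priori before invoking the time-regularity estimate.
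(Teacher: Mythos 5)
Your overall architecture is the same as the paper's: Chebyshev/Fubini to single out good time slices where the spatial bad set is small, a spatial oscillation estimate at good times, the time-H\"older estimate (Lemma~\ref{lem2-2}) to propagate this to all times, and finally Lemma~\ref{lem3-3} to upgrade to a spacetime oscillation bound. The one genuinely different ingredient is how you obtain the single-slice spatial oscillation estimate. The paper applies Morrey's inequality with exponent $2n$ to $Du-e$, obtaining $\mathrm{osc}_{B_{1/2}}(u(\cdot,t_0)-e\cdot x)\le C(\varepsilon_0+\varepsilon_1^{1/(4n)})$; you instead integrate $Dv=Du-e$ along a tube of parallel segments, average the one-dimensional FTC over $\xi\in B_r$, use the shear change of variables to control the bad contribution by $C\sqrt{\varepsilon_1}/r^n$, absorb the endpoint shifts by the $L^\infty$ bound on $Dv$, and optimize $r\sim\varepsilon_1^{1/(2n+2)}$. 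Your route is more elementary (no Sobolev embedding needed), and in fact yields a marginally better exponent for $n\ge 2$; the Morrey route is shorter to state but imports a nontrivial inequality. Both are correct and lead to the same conclusion, and your more careful handling of the rescaling before invoking Lemma~\ref{lem3-3} is an honest account of a step the paper passes over silently.

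One small gap worth fixing: your parenthetical normalization step claims that subtracting $u(0,0)$ together with ``the standing Lipschitz bound'' suffices to bound $\|u-u(0,0)\|_{L^\infty(Q_1)}$, but the Lipschitz bound (or $|Du|\le 1$) only controls spatial oscillation, not the temporal one, so by itself it does not give an $L^\infty$ bound on $u-u(0,0)$. This is precisely where the paper inserts Lemma~\ref{lem3-2}: from $\mathrm{osc}_{B_1}u(\cdot,t)\le 2$ it concludes $\mathrm{osc}_{Q_1}u\le C$, and only then can one normalize and invoke Lemma~\ref{lem2-2} with absolute constants. You should insert the same appeal to Lemma~\ref{lem3-2}; otherwise the constant in your time-H\"older estimate is circularly tied to the very quantity you are trying to bound.
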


\begin{proof}
Set
$$
f(t):=|\{x\in B_1:|D u(x,t)-e|>\varepsilon_0\}|,
$$
$$
D:=\{t\in(-1,0):f(t)\geq \sqrt{\varepsilon_1}\}.
$$
We can easily get
$$
\int^0_{-1}f(t)\,dt\leq \varepsilon_1 \quad\text{and} \quad |D|\leq \sqrt{\varepsilon_1}.
$$
Thereby, it yields that
\begin{equation}
\label{3-10}
|\{x\in B_1:|D u(x,t)-e|>\varepsilon_0\}|\leq \sqrt{\varepsilon_1},
\end{equation}
for any $t\in (-1,0]\setminus D$ with $|D|\leq \sqrt{\varepsilon_1}$. Applying \eqref{3-10} and Morrey's inequality to get for all $t\in (-1,0]\setminus D$,
\begin{align}
\label{3-11}
&\quad \mathrm{osc}_{B_{1/2}}(u(x,t)-e\cdot x) \nonumber\\
&\leq C\|D u-e\|_{L^{2n}(B_1)} \nonumber\\
&=C\left(\int_{\{x\in B_1:|D u(x,t)-e|\leq\varepsilon_0\}}+\int_{\{x\in B_1:|D u(x,t)-e|>\varepsilon_0\}}|D u-e|^{2n}\,dx\right)^\frac{1}{2n} \nonumber\\
&\leq C(\varepsilon_0+\varepsilon_1^\frac{1}{4n}),
\end{align}
where $C$ depends only on $n$.

On the other hand, we can see that
$$
\mathrm{osc}_{B_1}u(\cdot,t)\leq 2
$$
for $t\in (-1,0]$, due to $|D u|\leq 1$ in $Q_1$. Then we employ Lemma \ref{lem3-2} to deduce
$$
\mathrm{osc}_{Q_1}u(x,t)\leq C
$$
with $C$ depending on $n,p,q,s$ and $a^+$. Observe that $u(x,t)-u(0,0)$ solves \eqref{2-1} as well, and obviously
$$
\|u(x,t)-u(0,0)\|_{L^\infty(Q_1)}\leq C.
$$
Via applying Lemma \ref{lem2-2} to $u(x,t)-u(0,0)$, it follows that
$$
 \sup_{\stackrel{(x,t),(x,s)\in Q_1}{t \neq s}}\frac{|u(x,t)-u(x,s)|}{|t-s|^\gamma}\leq C,
$$
where
\begin{equation*}
\gamma=\begin{cases}\frac{1}{\beta(1+s)-s} &{ \text{if } -1<q<0},\\[2mm]
\frac{1}{2} &{ \text{if } 0\leq q}
\end{cases}
\end{equation*}
with $\beta=\frac{q+2}{q+1}$, and the constant $C$ depends only on $n,p,q,s,a^-$ and $a^+$. Thus by $|D|\leq \sqrt{\varepsilon_1}$ and \eqref{3-11}, we derive
$$
\mathrm{osc}_{x\in B_\frac{1}{2}}(u(x,t)-e\cdot x)\leq C\big(\varepsilon_0+\varepsilon_1^\frac{1}{4n}+\varepsilon_1^\frac{\gamma}{2}\big)
$$
for any $t\in(-\frac{1}{4},0]$. It follows from Lemma \ref{lem3-3} that
$$
\mathrm{osc}_{(x,t)\in Q_\frac{1}{2}}(u(x,t)-e\cdot x)\leq C\big(\varepsilon_0+\varepsilon_1^\frac{1}{4n}+\varepsilon_1^\frac{\gamma}{2}\big),
$$
if $\varepsilon_0$ and $\varepsilon_1$ are small enough. Consequently, if $\varepsilon_0,\varepsilon_1$ are sufficiently small, then there is a constant $d\in\mathbb{R}$ satisfying
$$
|u(x,t)-d-e\cdot x|\leq \omega
$$
for any $(x,t)\in Q_{1/2}$.
\end{proof}

Taking into account that the framework of equation \eqref{main} is analogous to that of
$$
\partial_tu=|D u|^q\left(\delta_{ij}+(p-2)\frac{u_iu_j}{|D u|^2}\right)u_{ij},
$$
we may obtain, with the help of the same tool that is Corollary 1.2 in \cite{Wang13}, the regularity of small perturbation solutions to \eqref{2-1} as well.

\begin{proposition}
\label{pro3-5}
Let $\beta\in(0,1)$. Suppose that $u$ is a smooth solution to \eqref{2-1} in $Q_1$. Then there are $\omega>0$ (small) and $C>0$ (large), both depending on $n,p,q,s,a^+,\|D_{x,t}a(x,t)\|_{L^\infty(Q_1)}$ and $\beta$, such that if a linear function $L(x)$ with $\frac{1}{2}\leq |D L|\leq2$ fulfills
$$
\|u(x,t)-L(x)\|_{L^\infty(Q_1)}\leq \omega,
$$
then
$$
\|u-L\|_{C^{2,\beta}(Q_{1/2})}\leq C.
$$
\end{proposition}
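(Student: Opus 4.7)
The plan is to recast equation \eqref{2-1} as a fully nonlinear parabolic equation $\partial_t u = F(x,t,Du,D^2u)$ with
\begin{equation*}
F(x,t,\eta,M) = \bigl[(|\eta|^2+\varepsilon^2)^{q/2} + a(x,t)(|\eta|^2+\varepsilon^2)^{s/2}\bigr]\left(\delta_{ij} + (p-2)\frac{\eta_i\eta_j}{|\eta|^2+\varepsilon^2}\right)M_{ij},
\end{equation*}
and to apply the small perturbation regularity theorem of Wang, namely Corollary 1.2 in \cite{Wang13}, using $L$ as the smooth ``base solution''. Since $L$ is affine, $\partial_t L = 0$ and $D^2 L = 0$, so $L$ is itself a classical solution of \eqref{2-1}. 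The crucial structural feature is that $|DL| \in [1/2,2]$ stays away from zero, so the base point $(\eta,M) = (DL,0)$ avoids the degenerate/singular locus $\{\eta = 0\}$ and the nonlinearity is well-behaved nearby.

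The first step is to verify the hypotheses of \cite[Corollary 1.2]{Wang13} in a neighborhood of this base point. On the annular region $\{\eta : 1/4 \leq |\eta| \leq 3\}$, the quantity $|\eta|^2 + \varepsilon^2$ is bounded between positive constants, so $F$ is $C^\infty$ in $(\eta, M)$ with bounds depending only on $n,p,q,s,a^+$. The linearization
\begin{equation*}
\frac{\partial F}{\partial M_{ij}}(x,t,\eta) = \bigl[(|\eta|^2+\varepsilon^2)^{q/2} + a(x,t)(|\eta|^2+\varepsilon^2)^{s/2}\bigr]\left(\delta_{ij} + (p-2)\frac{\eta_i\eta_j}{|\eta|^2+\varepsilon^2}\right)
\end{equation*}
has eigenvalues pinched in a fixed interval $[\lambda, \Lambda]$ depending only on $p,q,s,a^+$, so the problem is uniformly parabolic near the base point. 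The only dependence of $F$ on $(x,t)$ enters through $a(x,t)$, and the assumption $a \in C^1(Q_1)$ yields $C^\beta$ regularity in $(x,t)$ for every $\beta \in (0,1)$, with norm controlled by $\|D_{x,t}a\|_{L^\infty(Q_1)}$.

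The second step is to choose $\omega$ small enough, quantitatively in terms of $n,p,q,s,a^+,\|D_{x,t}a\|_{L^\infty(Q_1)}$ and $\beta$, so that the $L^\infty$ closeness $\|u - L\|_{L^\infty(Q_1)} \leq \omega$ indeed places the problem in the perturbative regime where Wang's theorem applies. The conclusion of the theorem then provides the estimate
\begin{equation*}
\|u - L\|_{C^{2,\beta}(Q_{1/2})} \leq C
\end{equation*}
with the dependencies listed in the statement.

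The main technical point, in my view, is bookkeeping rather than a new idea: one must check that the structural assumptions of \cite[Corollary 1.2]{Wang13} (uniform parabolicity on a neighborhood of the base Hessian, smoothness in $(\eta,M)$, and H\"older regularity of $F$ in $(x,t)$) hold with constants that are scale-invariant and compatible with the claimed dependence of $\omega$ and $C$ on the data. The key enabling fact throughout is the nondegeneracy $|DL| \geq 1/2$, which is precisely what separates the small-perturbation regime treated here from the regime in which the $(q,s)$-type degeneracy and singularity of \eqref{2-1} become active.
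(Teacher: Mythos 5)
Your proposal is correct and takes the same route as the paper: both observe that the affine function $L$ is itself a solution of \eqref{2-1} (since $D^2L=0$ and $\partial_tL=0$) and then invoke Wang's small-perturbation regularity result, Corollary 1.2 in \cite{Wang13}. The paper states this appeal in one line, whereas you additionally verify the hypotheses of Wang's theorem (uniform parabolicity, smoothness of $F$ in $(\eta,M)$ on the annulus $1/4\leq|\eta|\leq 3$, and H\"older regularity in $(x,t)$ through $a\in C^1$), which is sound bookkeeping but not a different argument.
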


\begin{proof}
Note that $L(x)$  is also a solution to \eqref{2-1}. So we could infer this conclusion by Corollary 1.2 in \cite{Wang13}.
\end{proof}

Once we show that $u$ is close to some linear function, then the H\"{o}lder regularity for $D u$ follows from the above proposition. From Remark \ref{rem3-2-1}, we have known that $\|D_{x,t}a(x,t)\|_{L^\infty(Q_1)}$ is less than $\kappa$ ($\kappa\leq1$), so in Proposition \ref{pro3-5} we may replace $\|D_{x,t}a(x,t)\|_{L^\infty(Q_1)}$ by $1$ so that $\omega, C$ do not depend on $\|D_{x,t}a(x,t)\|_{L^\infty(Q_1)}$.

In the sequel, we shall give a uniformly {\em a priori} H\"{o}lder estimate on the solution of \eqref{2-1}.

\begin{theorem}[{\em A priori} H\"{o}lder gradient estimate]
\label{thm3-6}
Let the assumptions \eqref{1-1}--\eqref{1-3} be in force and let $\varepsilon\in(0,1)$. Suppose that $\|D_{x,t} a(x,t)\|_{L^\infty(Q_1)}\leq \kappa$, where $\kappa\in(0,1]$ is a small constant depending on $n,p,q,s,a^-,a^+$. Assume that $u$ is a smooth solution of \eqref{2-1} satisfying $|D u|\leq 1$ in $Q_1$. Then there are two constants $\alpha,C>0$, both of which depend on $n,p,q,s,a^-$ and $a^+$, such that the following estimates hold:
$$
|D u(x,t)-D u(y,s)|\leq C\big(|x-y|^\alpha+|t-s|^{\frac{\alpha}{2-\alpha q}}\big)
$$
and
$$
|u(x,t)-u(x,s)|\leq C|t-s|^\frac{1+\alpha}{2-\alpha q}
$$
for any $(x,t),(y,s),(x,s)\in Q_{\frac{1}{2}}$.
\end{theorem}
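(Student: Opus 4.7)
The proof iterates the dichotomy of Lemma~\ref{lem3-1} and Corollary~\ref{cor3-2} at $(0,0)$ along the geometric sequence of intrinsic cylinders $Q^{(1-\delta)^k}_{\tau^k}$, branching into the small perturbation regime of Proposition~\ref{pro3-5} as soon as the iteration fails. First, fix $l\in(\tfrac12,1)$ close to $1$ and $\mu>0$ small, and let $\tau\in(0,\tfrac14)$ and $\delta>0$ be the constants produced by Lemma~\ref{lem3-1}, selected so that $\tau<(1-\delta)^{1+q}$ as in~\eqref{3-8}. For each integer $k\geq 0$ with $\varepsilon\leq(1-\delta)^k$, introduce the rescaled function
\begin{equation*}
w_k(x,t)=\frac{u\bigl(\tau^k x,\,\tau^{2k}(1-\delta)^{-qk}t\bigr)}{\tau^k(1-\delta)^k},\qquad (x,t)\in Q_1,
\end{equation*}
which solves~\eqref{2-1} with coefficient $\bar a_k(x,t)=(1-\delta)^{k(s-q)}a(\tau^k x,\tau^{2k}(1-\delta)^{-qk}t)$ and parameter $\bar\varepsilon_k=\varepsilon(1-\delta)^{-k}\in(0,1]$. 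The compatibility~\eqref{3-8} together with $s\geq q$ ensures $\|D_{x,t}\bar a_k\|_{L^\infty(Q_1)}\leq\kappa$ at every admissible $k$, so Lemma~\ref{lem3-1} can be applied to $w_k$ at every step.

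The inductive step is the following alternative. Assume $|Dw_k|\leq 1$ on $Q_1$. Either \emph{(A)} for every $e\in\mathbb{S}^{n-1}$,
$|\{(x,t)\in Q_1: Dw_k\cdot e\leq l\}|>\mu|Q_1|$, in which case Lemma~\ref{lem3-1} yields $|Dw_k|<1-\delta$ on $Q^{1-\delta}_\tau$, equivalently $|Du|<(1-\delta)^{k+1}$ on $Q^{(1-\delta)^{k+1}}_{\tau^{k+1}}$, and one advances to step $k+1$; or \emph{(B)} there exists $e\in\mathbb{S}^{n-1}$ with $|\{Dw_k\cdot e\leq l\}|\leq\mu|Q_1|$, meaning that $Dw_k$ is close to $e$ on most of $Q_1$. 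In case~(B) one applies Lemma~\ref{lem3-4} (with $\omega$ small, to be chosen from Proposition~\ref{pro3-5}) to obtain $d\in\mathbb{R}$ with $\|w_k-d-e\cdot x\|_{L^\infty(Q_{1/2})}\leq\omega$, and then Proposition~\ref{pro3-5} upgrades this to $\|w_k-d-e\cdot x\|_{C^{2,\beta}(Q_{1/4})}\leq C$ for any prescribed $\beta\in(0,1)$.

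Setting $\alpha:=\log(1-\delta)/\log\tau\in(0,1)$, one has $(1-\delta)^k=\tau^{k\alpha}$ and the intrinsic time scale equals $\tau^{k(2-\alpha q)}$. If~(A) persists through every admissible step, the chain of bounds $|Du|\leq(1-\delta)^k$ on $Q^{(1-\delta)^k}_{\tau^k}$ immediately gives $|Du(x,t)-Du(0,0)|\leq C(|x|^\alpha+|t|^{\alpha/(2-\alpha q)})$, and the concomitant oscillation estimate $\mathrm{osc}_{Q^{(1-\delta)^k}_{\tau^k}}u\leq C\tau^k(1-\delta)^k=C\tau^{k(1+\alpha)}$ at times $|t-s|\lesssim\tau^{k(2-\alpha q)}$ translates directly into $|u(x,t)-u(x,s)|\leq C|t-s|^{(1+\alpha)/(2-\alpha q)}$. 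If~(B) first triggers at some $k_1$, the $C^{2,\beta}$ bound on $w_{k_1}$ rescales back (using $x=\tau^{k_1}\xi$, $t=\tau^{2k_1}(1-\delta)^{-qk_1}\vartheta$, $Du=(1-\delta)^{k_1}Dw_{k_1}$) to yield Hölder estimates for $Du$ at $(0,0)$ of the same form, provided $\beta\geq\alpha$. Repeating the argument centred at any base point $(x_0,t_0)\in Q_{1/2}$ completes the proof of the two stated inequalities.

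The main obstacle is maintaining self-similarity through the iteration: the rescaled coefficient $\bar a_k$ must keep fulfilling $\|D_{x,t}\bar a_k\|_{L^\infty(Q_1)}\leq\kappa$ at every stage, which is precisely the algebraic constraint driving the choice~\eqref{3-8} of $\tau$, and the iteration exponent $\alpha$ produced at the end has to be reconciled with the Schauder exponent $\beta$ coming from Proposition~\ref{pro3-5}; this is handled by taking $\alpha$ small relative to $\beta$ so that all rescaling factors of the form $\tau^{k_1(\alpha-\beta)}$ stay bounded, and by verifying that the standard parabolic time exponent $\beta/2$ at the rescaled level matches the intrinsic exponent $\alpha/(2-\alpha q)$ after undoing the time rescaling.
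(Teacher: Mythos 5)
Your plan correctly reproduces the scaffold of the paper's argument — iterating the dichotomy of Lemma~\ref{lem3-1}/Corollary~\ref{cor3-2} along the intrinsic cylinders $Q^{(1-\delta)^k}_{\tau^k}$, and switching into the small-perturbation regime (Lemma~\ref{lem3-4} then Proposition~\ref{pro3-5}) when the density condition fails — and the conversion of the geometric decay into the H\"older exponent $\alpha=\log(1-\delta)/\log\tau$ and the time exponent $(1+\alpha)/(2-\alpha q)$ is sound. However, there is a genuine gap in the branch ``(A) persists through every admissible step.''

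You correctly restrict the iteration to integers $k$ with $\varepsilon\leq(1-\delta)^k$, because the rescaled regularization parameter $\bar\varepsilon_k=\varepsilon(1-\delta)^{-k}$ must stay in $(0,1]$ for Lemma~\ref{lem3-1} to apply. But then ``(A) persists through every admissible step'' does \emph{not} immediately give the H\"older estimate at the origin: the iteration terminates at the finite index $k_0:=\lfloor\log\varepsilon/\log(1-\delta)\rfloor$, and the resulting bounds $|Du|\leq(1-\delta)^k$ hold only for $k\leq k_0$. They say nothing about the oscillation of $Du$ inside the terminal cylinder $Q^{(1-\delta)^{k_0}}_{\tau^{k_0}}$, so you cannot conclude $|Du(x,t)-Du(0,0)|\leq C(|x|^\alpha+|t|^{\alpha/(2-\alpha q)})$ directly, and the constant so obtained would degenerate as $\varepsilon\to 0$. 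What is missing is precisely the paper's Case~1: at the terminal scale $k_0$ one has $\bar\varepsilon_{k_0}=\varepsilon(1-\delta)^{-k_0}\in(1-\delta,1]$, so the rescaled equation~\eqref{3-14} is a uniformly parabolic quasilinear equation with coefficients bounded above and below independently of $\varepsilon$. One then invokes classical quasilinear parabolic theory together with Schauder estimates (cf.\ \cite[Theorem 4.4]{LSU68}) to obtain a $C^{1,1}$-type bound on $w_{k_0}$ inside $Q^{1-\delta}_{\tau}$, which after rescaling delivers the H\"older estimate at the origin with a constant independent of $\varepsilon$. Without this closing step the argument is incomplete. (Minor point: the auxiliary constraint you state, ``provided $\beta\geq\alpha$,'' is not quite the relevant one; what is actually used in the paper, with $\beta=\tfrac12$ in Proposition~\ref{pro3-5}, is the inequality $\frac{\alpha}{2-\alpha q}\leq\tfrac12$, which is ensured by the choice~\eqref{3-8} of $\tau$ and $\delta$.)
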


\begin{proof}
This proof is similar to that of Theorem 4.8 in \cite{IJS19}. However, for the sake of completeness and convenience, we give the details of proof here. We first demonstrate the H\"{o}lder continuity of $D u$ at $(0,0)$ and the H\"{o}lder continuity of $u$ in $t$ at (0,0). Then, by standard translation arguments, the interior H\"{o}lder regularity follows.

We choose $\omega$ as the one in Proposition \ref{pro3-5} with $\|D_{x,t} a(x,t)\|_{L^\infty(Q_1)}$ replaced by $1$ and $\beta=\frac{1}{2}$. And then for this $\omega$ we pick two quantities $\varepsilon_0,\varepsilon_1>0$  small so that Lemma \ref{lem3-4} holds true. Now we fix
$$
l=1-\frac{\varepsilon_0^2}{2} \quad \text{and} \quad \mu=\frac{\varepsilon_1}{|Q_1|}.
$$
Here we observe that if for arbitrary $e\in \mathbb{S}^{n-1}$
$$
|\{(x,t)\in Q_1:D u\cdot e\leq l\}|\leq\mu|Q_1|,
$$
then
$$
|\{(x,t)\in Q_1:|D u- e|>\varepsilon_0\}|\leq\varepsilon_1.
$$
This estimate will be used later.

Let $\tau$ and $\delta$ be two positive constants coming from Corollary \ref{cor3-2}. By $[\log \varepsilon/\log(1-\delta)]$, we mean the integer part of $\log\varepsilon/\log(1-\delta)$. Let $k$ be either $[\log\varepsilon/\log(1-\delta)]$ or the minimum nonnegative integer that makes condition \eqref{3-9} false, whichever is smaller. Then it follows from Corollary \ref{cor3-2} that
$$
|D u(x,t)|\leq (1-\delta)^m \quad\text{in } Q_{\tau^m}^{(1-\delta)^m}
$$
for $m=0,1,\cdots,k$. When
$$
(x,t)\in Q_{\tau^m}^{(1-\delta)^m}\setminus Q_{\tau^{m+1}}^{(1-\delta)^{m+1}},
$$
we know that
$$
|x|\geq \tau^{m+1} \quad\text{or} \quad  |t|\geq (1-\delta)^{-(m+1)q}\tau^{2(m+1)}.
$$
Thus, by taking $\alpha=\frac{\log(1-\delta)}{\log \tau}$, it yields that
$$
|x|^\alpha\geq (1-\delta)^{m+1} \quad\text{or} \quad  |t|^\frac{\alpha}{2-\alpha q}\geq (1-\delta)^{(m+1)}.
$$
Then
$$
|D u(x,t)|\leq (1-\delta)^m\leq C\left(|x|^\alpha+|t|^\frac{\alpha}{2-\alpha q}\right)
$$
with $C=\frac{1}{1-\delta}$ in $Q_{\tau^m}^{(1-\delta)^m}\setminus Q_{\tau^{m+1}}^{(1-\delta)^{m+1}}$. Hence, for each $\xi\in \mathbb{R}^n$ with $|\xi|\leq (1-\delta)^k$,
\begin{equation}
\label{3-12}
|D u(x,t)-\xi|\leq (1-\delta)^m+(1-\delta)^k\leq C\left(|x|^\alpha+|t|^\frac{\alpha}{2-\alpha q}\right)
\end{equation}
in $Q_1\setminus Q_{\tau^{k+1}}^{(1-\delta)^{k+1}}$. Observe that if $q\geq0$, \eqref{3-8} implies that $2-\alpha q>0$ and $\frac{\alpha}{2-\alpha q}<\frac{1}{2}$. For $m=0,1,\cdots,k$, set
\begin{equation}
\label{3-13}
u_m(x,t)=\frac{1}{\tau^m(1-\delta)^m} u(\tau^mx,\tau^{2m}(1-\delta)^{-mq}t), \quad (x,t)\in Q_1.
\end{equation}
We can check that $|D u_m(x,t)|\leq1$ in $Q_1$ and $u_m$ solves
\begin{align}
\label{3-14}
\partial_tu_m=\big[(|D u_m|^2+\hat{\varepsilon}^2)^\frac{q}{2}+\hat{a}(x,t)(|D u_m|^2+\hat{\varepsilon}^2)^\frac{s}{2}\big]
\left(\delta_{ij}+(p-2)\frac{\partial_i u_m \partial_ju_m}{|D u_m|^2+\hat{\varepsilon}^2}\right)\partial_{ij}u_m,
\end{align}
where
$$
\hat{a}(x,t)=(1-\delta)^{m(s-q)}a(\tau^mx,\tau^{2m}(1-\delta)^{-mq}t) \quad \text{and} \quad\hat\varepsilon=\frac{\varepsilon}{(1-\delta)^m}.
$$
Clearly, $\hat{\varepsilon}^2\leq \varepsilon^2(1-\delta)^{-2k}\leq1$. Noting the framework of this equation, we have, for any $t\in[-1,0]$,
$$
\mathrm{osc}_{B_1}u_m(\cdot,t)\leq2
$$
and further get
$$
\mathrm{osc}_{Q_1}u_m\leq C
$$
by Lemma \ref{lem3-2}. This indicates
\begin{equation}
\label{3-15}
\mathrm{osc}_{Q_{\tau^m}^{(1-\delta)^m}}u\leq C\tau^m(1-\delta)^m.
\end{equation}
In the rest of proof, we let $w(x,t)=u_k(x,t)$.

\smallskip

\textbf{Case 1.} $k=[\log\varepsilon/\log(1-\delta)]$. Then we get $(1-\delta)^{k+1}<\varepsilon\leq (1-\delta)^k$, and hence $\frac{1}{2}<1-\delta<\varepsilon(1-\delta)^{-k}\leq1$. From this, we find that, when $m=k$, \eqref{3-14} is a uniformly quasilinear parabolic equation with bounded and smooth coefficients. In view of the standard parabolic quasilinear equation theory (see, e.g. \cite[Theorem 4.4]{LSU68}) together with Schauder estimates, we may find $\zeta\in\mathbb{R}^n, |\zeta|\leq1$ such that in $Q_\tau^{1-\delta}\subset Q_{1/4}$
$$
|D w(x,t)-\zeta|\leq C(|x|+|t|^\frac{1}{2})\leq C\big(|x|^\alpha+|t|^\frac{\alpha}{2-\alpha q}\big)
$$
and
$$
|\partial_t w|\leq C,
$$
where $C>0$ depends on $n,p,q,s$ and $a^+$. Here we have utilized the fact that $\frac{\alpha}{2-\alpha q}\leq\frac{1}{2}$. Rescaling back, we arrive at
$$
|D u(\tau^kx,\tau^{2k}(1-\delta)^{-kq}t)-(1-\delta)^k\zeta|\leq C(1-\delta)^k\big(|x|^\alpha+|t|^\frac{\alpha}{2-\alpha q}\big)
$$
for $(x,t)\in Q_\tau^{1-\delta}$, which leads to
\begin{align}
\label{3-16}
|D u(x,t)-(1-\delta)^k\zeta|&\leq C(1-\delta)^k\big(\tau^{-k\alpha}|x|^\alpha+(\tau^{-2k}(1-\delta)^{kq})^\frac{\alpha}{2-\alpha q}|t|^\frac{\alpha}{2-\alpha q}\big)  \nonumber\\
&=C\big(|x|^\alpha+|t|^\frac{\alpha}{2-\alpha q}\big)
\end{align}
by $(1-\delta)\tau^{-\alpha}=1$, where $(x,t)\in Q_{\tau^{k+1}}^{(1-\delta)^{k+1}}$. Similarly,
\begin{equation}
\label{3-17}
|u(x,t)-u(x,0)|\leq C\tau^{-k}(1-\delta)^{k(1+q)}|t|
\end{equation}
with $(x,t)\in Q_{\tau^{k+1}}^{(1-\delta)^{k+1}}$. Therefore, it yields by \eqref{3-12} and \eqref{3-16} that for some vector $\xi_0\in \mathbb{R}^n$,
$$
|D u(x,t)-\xi_0|\leq C\big(|x|^\alpha+|t|^\frac{\alpha}{2-\alpha q}\big)
$$
in $Q_{1/2}$, where $C>0$ depends on $n,p,q,s,a^-$ and $a^+$. On the other hand, from \eqref{3-17} we obtain, for $|t|\leq \tau^{2l}(1-\delta)^{-lq}$ with $l\geq k+1$, that
\begin{equation}
\label{3-18}
|u(0,t)-u(0,0)|\leq C\tau^l(1-\delta)^l,
\end{equation}
where we exploited the fact that $\tau<(1-\delta)^{1+q}$ in \eqref{3-8}. By means of \eqref{3-15} and \eqref{3-18}, it follows that
$$
|u(0,t)-u(0,0)|\leq C|t|^\beta
$$
for every $t\in(-\frac{1}{4},0]$, where $\beta=\frac{1+\alpha}{2-\alpha q}$. It is easy to see that $\beta>\frac{1}{2}$ if $q>-2$. Indeed, if $t\in(-\frac{1}{4},0)$, there always exists $\tilde{l}\in \{0,1,\cdots,k,k+1,\cdots,l,\cdots\}$ such that
$$
t\in\big(-\tau^{2\tilde{l}}(1-\delta)^{-q\tilde{l}},-\tau^{2(\tilde{l}+1)}(1-\delta)^{-q(\tilde{l}+1)}\big].
$$
Via \eqref{3-15} and \eqref{3-18}, when we select $\beta$ verifying
$$
\tau(1-\delta)=(\tau^2(1-\delta)^{-q})^\beta,
$$
then we get
$$
|u(0,t)-u(0,0)|\leq C\tau^{\tilde{l}}(1-\delta)^{\tilde{l}}=\frac{C}{(\tau^2(1-\delta)^{-q})^\beta}(\tau^2(1-\delta)^{-q})^{\beta(\tilde{l}+1)}
\leq \frac{C}{\tau(1-\delta)}|t|^\beta.
$$

\smallskip

\textbf{Case 2.} $k<[\log\varepsilon/\log(1-\delta)]$. Then for some $e\in \mathbb{R}^n$,
\begin{equation}
\label{3-19}
\left|\left\{(x,t)\in Q_{\tau^k}^{(1-\delta)^k}: D u\cdot e\leq l(1-\delta)^k\right\}\right|\leq\mu\left|Q_{\tau^k}^{(1-\delta)^k}\right|.
\end{equation}
Additionally,
$$
|D u|<(1-\delta)^l \quad\text{in } Q_{\tau^l}^{(1-\delta)^l} \quad\text{for all } l=0,1,\cdots,k.
$$
We can easily see that $w$ ($w=u_k$) satisfies $|D w|\leq 1$ and solves equation \eqref{3-14} in $Q_1$. By virtue of \eqref{3-19} and the selections of $l$ and $\mu$, we get
$$
|\{(x,t)\in Q_1:|D u- e|>\varepsilon_0\}|\leq\varepsilon_1.
$$
It thus follows from Lemma \ref{lem3-4} that there is $d\in\mathbb{R}$ satisfying
$$
|w(x,t)-d-e\cdot x|\leq \omega \quad \text{for all } (x,t)\in Q_{1/2}.
$$
By Proposition \ref{pro3-5}, there is $b\in\mathbb{R}^n$ fulfilling
$$
|D w-b|\leq C(|x|+\sqrt{|t|})
$$
and
$$
|\partial_t w|\leq C
$$
in $Q_\tau^{1-\delta}\subset Q_{1/4}$. As in Case $1$, we also arrive at
$$
|D u(x,t)-\xi_1|\leq C\big(|x|^\alpha+|t|^\frac{\alpha}{2-\alpha q}\big) \quad\text{in } Q_{1/2}
$$
for some vector $\xi_1\in\mathbb{R}^n$ with $|\xi_1|\leq1$, and
$$
|u(0,t)-u(0,0)|\leq C|t|^\beta \quad\text{for } t\in \left(-\frac{1}{4},0\right],
$$
where $C>0$ depends on $n,p,q,s,a^-$ and $a^+$.

In summary, we have showed that there are $\alpha,C>0$, depending on $n,p,q,s,a^-$ and $a^+$, as well as $\xi\in\mathbb{R}^n$ with $|\xi|\leq1$, such that
$$
|D u(x,t)-\xi|\leq C\big(|x|^\alpha+|t|^\frac{\alpha}{2-\alpha q}\big) \quad\text{in } Q_{1/2}
$$
and
$$
|u(0,t)-u(0,0)|\leq C|t|^\beta \quad\text{for } t\in \left(-\frac{1}{4},0\right].
$$
Finally the claim follows by using the standard translation arguments.
\end{proof}

Next we are going to use the solution of equation \eqref{2-1} to approximate the solution of \eqref{main}. Before that, we need some crucial results on viscosity solutions such as the boundary estimates, apart from the known comparison principle and stability. We prescribe that the assumptions \eqref{1-1} and \eqref{1-2} hold in the following two conclusions. Here two notations are introduced, for two real numbers $a$ and $b$, $a\vee b = \max\{a, b\}$ and $a\wedge b = \min\{a, b\}$.

\begin{proposition}[Boundary estimates]
\label{pro3-7}
Assume that $u\in C(\overline{Q_1})$ is a solution to \eqref{2-1} with $\varepsilon\in(0,1)$ and $a(x,t)$ satisfying \eqref{1-3} and $\|D_{x,t}a(x,t)\|_{L^\infty(Q_1)}\leq1$, and that $\varphi:=u\mid_{\partial_pQ_1}$ possesses a modulus of continuity denoted by $\rho$. Then there is another modulus of continuity $\rho^*$, which depends on $n,p,q,s,a^-,a^+, \rho$ and $\|\varphi\|_{L^\infty(\partial_pQ_1)}$, such that
$$
|u(x,t)-u(y,s)|\leq\rho^*(|x-y|\vee\sqrt{|t-s|})
$$
for every $(x,t),(y,s)\in \overline{Q_1}$.
\end{proposition}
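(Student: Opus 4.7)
The plan is to combine the comparison principle (Proposition \ref{pro2-4}) with barrier functions constructed at each point of $\partial_p Q_1$, so as to transfer the modulus of continuity $\rho$ of $\varphi$ from the parabolic boundary to all of $\overline{Q_1}$. Fix $(x_0, t_0) \in \partial_p Q_1$ and a scale $r \in (0, 1)$ to be optimized later. The aim is to build a classical supersolution $\overline{w}_r$ of \eqref{2-1} satisfying $\overline{w}_r \geq \varphi$ on $\partial_p Q_1$ and $\overline{w}_r(x_0, t_0) \leq \varphi(x_0, t_0) + \rho(r)$; comparison then forces $u \leq \overline{w}_r$ on $\overline{Q_1}$, and a symmetric subsolution barrier yields the reverse inequality.

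For the barrier itself I would try
$$
\overline{w}_r(x, t) = \varphi(x_0, t_0) + \rho(r) + L_2 |x - x_0|^\beta + L_1 (t - t_0),
$$
with $\beta = 2$ in the degenerate range $q \geq 0$ and $\beta = (q + 2)/(q + 1)$ in the singular range $-1 < q < 0$, exactly as in the computation of Lemma \ref{lem2-2}. The coefficient $L_2 = L_2(r)$ is dictated by the boundary condition: splitting $\partial_p Q_1$ into the piece within parabolic distance $r$ of $(x_0, t_0)$, on which $\rho(r)$ already dominates $|\varphi - \varphi(x_0, t_0)|$, and its complement, on which one needs $L_2 |y - x_0|^\beta + L_1 (s - t_0) \geq 2 \|\varphi\|_{L^\infty(\partial_p Q_1)}$, forces $L_2 \sim \|\varphi\|_{L^\infty}/r^\beta$ together with a comparable lower bound on $L_1$. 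The coefficient $L_1$ is then further enlarged so that $\overline{w}_r$ is a classical supersolution of \eqref{2-1}; this is a direct calculation in the spirit of Cases 1--3 of Lemma \ref{lem2-2}, the $a(x, t)(|D \overline{w}_r|^2 + \varepsilon^2)^{s/2}$ term being absorbed via $a \leq a^+$. For lateral boundary points where $t$ may lie on either side of $t_0$, I would run the argument separately on $\{t \geq t_0\}$ with this barrier, and on $\{t \leq t_0\}$ by anchoring an analogous barrier at a slightly earlier time $t_0 - \delta$ and passing to the limit $\delta \to 0^+$.

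Substituting back, the one-sided inequality $u(x, t) - \varphi(x_0, t_0) \leq \rho(r) + L_2(r) d^\beta + L_1(r) d^2$ with $d = |x - x_0| \vee \sqrt{|t - t_0|}$, optimized in $r$ as a function of $d$, yields a boundary modulus $\rho_\partial$ depending only on $\rho, \|\varphi\|_{L^\infty(\partial_p Q_1)}, n, p, q, s, a^-, a^+$. Combining $\rho_\partial$ with the interior Lipschitz estimate of Lemma \ref{lem2-1} and the interior H\"older-in-time estimate of Lemma \ref{lem2-2} via a standard short/long distance splitting (comparing to the nearest boundary point when one of the two points lies close to $\partial_p Q_1$, and applying the interior moduli otherwise) produces the desired global modulus $\rho^*$ on $\overline{Q_1}$. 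I expect the main obstacle to be the barrier verification in the singular regime: as $q \to -1^+$ the exponent $\beta = (q + 2)/(q + 1)$ blows up, the coupling between $(|D \overline{w}_r|^2 + \varepsilon^2)^{q/2}$ and $a(x, t)(|D \overline{w}_r|^2 + \varepsilon^2)^{s/2}$ demands the delicate case analysis of Cases 1 and 2 of Lemma \ref{lem2-2}, and one must check that the resulting $\rho^*(d) \to 0$ as $d \to 0^+$ uniformly in $\varepsilon \in (0, 1)$.
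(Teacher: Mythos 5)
The central gap is in the handling of lateral boundary points. Your barrier $\overline{w}_r(x,t)=\varphi(x_0,t_0)+\rho(r)+L_2|x-x_0|^\beta+L_1(t-t_0)$ has a spatial part $L_2|x-x_0|^\beta$ that is strictly \emph{convex}, so $F_\varepsilon(x,t,D\overline{w}_r,D^2\overline{w}_r)>0$; the supersolution inequality $\partial_t\overline{w}_r - F_\varepsilon\geq 0$ then forces $L_1>0$. But once $L_1>0$, if $(x_0,t_0)$ lies on the lateral boundary $\partial B_1\times(-1,0]$ with $t_0>-1$, the term $L_1(t-t_0)$ is strictly negative on the bottom $\overline{B_1}\times\{-1\}\subset\partial_pQ_1$ — for instance $\overline{w}_r(x_0,-1)=\varphi(x_0,t_0)+\rho(r)-L_1(1+t_0)$, which drops below $\varphi(x_0,-1)$ as soon as $L_1$ is moderately large — so $\overline{w}_r\geq\varphi$ on $\partial_pQ_1$ fails and the comparison principle cannot be invoked in $Q_1$. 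Your proposed fix, restricting to the slab $B_1\times[t_0,0]$, does not repair this: the parabolic boundary of that slab contains $B_1\times\{t_0\}$, which is not part of $\partial_pQ_1$ and where no data for $u$ is available; anchoring at $t_0-\delta$ and letting $\delta\to0^+$ faces the identical obstruction.

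This is exactly the difficulty the paper's Lemmas \ref{lem6-1} and \ref{lem6-2} are designed to resolve, and it requires a qualitatively different spatial barrier at lateral points. For $y\in\partial B_1$, Lemma \ref{lem6-1} builds $W_y$, vanishing only at $y$, whose spatial operator is a \emph{strict elliptic supersolution}, $F_\varepsilon(x,t,DW_y,D^2W_y)\leq -1$ (a concave profile near $y$ glued to an $-|x-2y|^{-\sigma}$ tail, then rescaled). Because $F_\varepsilon$ is strictly negative, one can afford a nonnegative time term with a \emph{bounded but sign-changing} derivative: $W_{y,\tau}=(t-\tau)^2/2+2W_y$ is positive away from $(y,\tau)$ and still a strict parabolic supersolution (Lemma \ref{lem6-2}). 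A convex spatial profile such as $|x-x_0|^\beta$ simply cannot play this role. Your barrier is sound at bottom points $(x_0,-1)$, where $t-t_0\geq 0$ forces the time term to be nonnegative on $\partial_pQ_1$ — and indeed your construction there is a close relative of the paper's $B(t+1)+|x-y|^\iota$ — but the lateral case needs the $W_y$-type construction, and that idea is absent from the proposal. The remaining steps (boundary modulus via Corollary \ref{cor6-3}, then patching with Lemmas \ref{lem2-1} and \ref{lem2-2}) are as in the paper, so this is the one place the argument breaks.
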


We will prove the aformentioned proposition in Section  \ref{sec6}. The last ingredient to be applied in the approximation step is the next lemma, which follows directly via the classical quasi-linear equation theory (see \cite[Theorem 4.4]{LSU68}) and the Schauder estimates.
\begin{lemma}
\label{lem3-8}
Let $g\in C(\partial_pQ_1)$. Let $0\leq a(x,t)\in C^1(\overline{Q_1})$. For $\varepsilon>0$, there is a unique smooth solution $u^\varepsilon\in C(\overline{Q_1})$ of \eqref{2-1} satisfying $u^\varepsilon=g$ on $\partial_pQ_1$.
\end{lemma}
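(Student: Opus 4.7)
The plan is to recognize that, for fixed $\varepsilon>0$, equation \eqref{2-1} is a uniformly parabolic quasi-linear equation with smooth coefficients, so it falls squarely within the scope of the classical solvability theory from \cite{LSU68}. Rewriting the equation as $\partial_t u = a_{ij}(x,t,Du)u_{ij}$ with
$$a_{ij}(x,t,\eta) = \bigl[(|\eta|^2+\varepsilon^2)^{q/2} + a(x,t)(|\eta|^2+\varepsilon^2)^{s/2}\bigr]\left(\delta_{ij}+(p-2)\frac{\eta_i\eta_j}{|\eta|^2+\varepsilon^2}\right),$$
the regularization parameter $\varepsilon>0$ removes the singularity at $\eta=0$, so $a_{ij}$ is $C^\infty$ in $\eta$ and inherits the $C^1$ regularity of $a(x,t)$. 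On every set $\{|\eta|\le M\}$, the eigenvalues of $a_{ij}$ lie in a positive compact interval whose endpoints depend only on $n,p,q,s,a^+,\varepsilon,M$, so the equation is uniformly parabolic with smooth structure.

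First, I would assume $g$ is smooth (with compatibility at the parabolic corner) and derive the standard a priori estimates: $\|u\|_{L^\infty(Q_1)}\le\|g\|_{L^\infty(\partial_pQ_1)}$ from the comparison principle, a boundary gradient bound via barriers linear in the inward normal direction (tuned so that the scalar factor times the Hessian's trace dominates $\partial_t$), and a global gradient bound either by a Bernstein argument on $|Du|^2$ or by invoking the a priori estimates of LSU. With $\|Du\|_{L^\infty(Q_1)}\le M$ in hand, $a_{ij}$ is evaluated on a bounded $\eta$-set, so uniform parabolicity holds globally. Theorem 4.4 of \cite{LSU68} then provides a classical solution $u^\varepsilon\in C^{2+\alpha,\,1+\alpha/2}(\overline{Q_1})$, and a bootstrap of interior Schauder estimates (together with differentiation of the equation in $x$ and $t$, using $a\in C^1$) upgrades this to $u^\varepsilon\in C^\infty(Q_1)$. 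Uniqueness is immediate from the comparison principle.

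For general $g\in C(\partial_pQ_1)$, I would approximate by smooth boundary data $g_k\to g$ uniformly, obtain solutions $u^\varepsilon_k$ as above, and use the comparison principle in the form
$$\|u^\varepsilon_k - u^\varepsilon_j\|_{L^\infty(Q_1)}\le\|g_k-g_j\|_{L^\infty(\partial_pQ_1)}$$
to pass to a uniform limit $u^\varepsilon\in C(\overline{Q_1})$. Interior smoothness is preserved by the Schauder estimates applied on arbitrary compact subdomains. The main (modest) obstacle is the gradient a priori bound: one must ensure it depends only on the problem data (including $\varepsilon$) and not on the smoothness class being constructed, so that it closes the existence argument and survives the approximation step. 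Continuity up to the parabolic boundary of the limit $u^\varepsilon$ is then guaranteed by the same barrier constructions that underpin Proposition \ref{pro3-7}.
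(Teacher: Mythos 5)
Your proposal is correct and takes essentially the same route as the paper, which gives no proof beyond citing Theorem 4.4 of \cite{LSU68} together with Schauder estimates; you have simply filled in the standard details (uniform parabolicity for bounded gradients since $\varepsilon>0$, gradient a priori bounds, comparison for uniqueness, and approximation of continuous boundary data). Nothing in your sketch deviates from or conflicts with what the authors intend.
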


With Propositions \ref{pro2-3}, \ref{pro2-4}, \ref{pro3-7} and Lemma \ref{lem3-8} in hand, we now are in a position to establish a crucial intermediate result under the assumption that $\|D_{x,t} a(x,t)\|_{L^\infty(Q_1)}$ ($\leq1$) is small, through letting $\varepsilon\rightarrow0$ in the {\em a priori} H\"{o}lder estimate in Theorem \ref{thm3-6}.

\begin{theorem}
\label{thm3-9}
Let the assumptions \eqref{1-1}--\eqref{1-3} be in force. Let $\|D_{x,t} a(x,t)\|_{L^\infty(Q_1)}\leq \kappa$, where $\kappa\in(0,1]$ is a small constant depending on $n,p,q,s,a^-,a^+$. Assume that $u$ is a bounded viscosity solution to \eqref{main} in $Q_1$. There are two constants $\alpha\in(0,1),C>0$, both of which depend on $n,p,q,s,a^-,a^+$ and $\|u\|_{L^\infty(Q_1)}$, such that the following estimates hold:
$$
|D u(x,t)-D u(y,s)|\leq C\big(|x-y|^\alpha+|t-s|^{\frac{\alpha}{2-\alpha q}}\big)
$$
and
$$
|u(x,t)-u(x,s)|\leq C|t-s|^\frac{1+\alpha}{2-\alpha q}
$$
for any $(x,t),(y,s),(x,s)\in Q_{\frac{1}{2}}$.
\end{theorem}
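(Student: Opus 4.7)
The strategy is an approximation argument: given the bounded viscosity solution $u$ to \eqref{main}, we approximate it by smooth solutions $u^\varepsilon$ of the regularized equation \eqref{2-1} to which the \emph{a priori} estimate in Theorem \ref{thm3-6} applies uniformly, and then pass to the limit $\varepsilon \to 0$ using the stability and comparison results of Section \ref{sec2}. Before invoking Theorem \ref{thm3-6} we must normalize the solution so that $|Du|\le 1$; this is done by first establishing a Lipschitz bound for $u$ (depending on $\|u\|_{L^\infty(Q_1)}$) and then applying the intrinsic scaling $v(x,t)=\frac{1}{\rho r}u(rx,\rho^{-q}r^2 t)$ with $\rho\ge \|Du\|_{L^\infty}+1$ discussed around \eqref{3-1}--\eqref{3-1-1}, which sends \eqref{main} to a rescaled equation of the same form with a coefficient $\overline a$ whose $C^1$-norm remains controlled.

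First, on a slightly smaller cylinder (say $Q_{7/8}$, after reduction), I would solve the Dirichlet problem for \eqref{2-1} with boundary data $u|_{\partial_p Q_{7/8}}$: Lemma \ref{lem3-8} provides a unique smooth solution $u^\varepsilon\in C(\overline{Q_{7/8}})$ for each $\varepsilon\in(0,1)$. The boundary modulus Proposition \ref{pro3-7} yields a modulus of continuity for $u^\varepsilon$ up to $\partial_pQ_{7/8}$ that is uniform in $\varepsilon$ (depending only on $n,p,q,s,a^-,a^+$ and the modulus of $u|_{\partial_p Q_{7/8}}$, which is itself controlled since $u$ is a viscosity solution, hence at least continuous after the preliminary Lipschitz step). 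Together with Lemma \ref{lem2-1} in the interior, this gives equicontinuity of $\{u^\varepsilon\}$ on $\overline{Q_{7/8}}$, so Arzelà--Ascoli furnishes a subsequence $u^{\varepsilon_j}\to \widetilde u$ locally uniformly, with $\widetilde u=u$ on $\partial_p Q_{7/8}$. The stability Proposition \ref{pro2-3} implies $\widetilde u$ is a viscosity solution of \eqref{main} in $Q_{7/8}$, and applying the comparison Proposition \ref{pro2-4} in both directions (using that $\widetilde u$ is Lipschitz in $x$ as the uniform limit of the $u^{\varepsilon_j}$, which satisfy the $\varepsilon$-independent Lipschitz bound of Lemma \ref{lem2-1}) forces $\widetilde u=u$ in $Q_{7/8}$.

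With this identification, Theorem \ref{thm3-6} applied to each $u^\varepsilon$ (after the normalizing rescaling) yields the two-sided H\"older bounds
\[
|Du^\varepsilon(x,t)-Du^\varepsilon(y,s)|\le C\bigl(|x-y|^\alpha+|t-s|^{\alpha/(2-\alpha q)}\bigr),\qquad |u^\varepsilon(x,t)-u^\varepsilon(x,s)|\le C|t-s|^{(1+\alpha)/(2-\alpha q)},
\]
with $\alpha$ and $C$ independent of $\varepsilon$, depending only on $n,p,q,s,a^-,a^+$ for the rescaled equation (and hence on $\|u\|_{L^\infty(Q_1)}$ once one unwinds the scaling). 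These estimates pass to the uniform limit immediately for $u$, and the gradient bound transfers because the $u^{\varepsilon_j}$ are uniformly $C^{1,\alpha}$ on compact subsets of $Q_{7/8}$, so after extracting a further subsequence $Du^{\varepsilon_j}\to Du$ locally uniformly as well. Undoing the scaling on $Q_{1/2}$ produces the stated estimates with constants depending on $n,p,q,s,a^-,a^+,A$ and $\|u\|_{L^\infty(Q_1)}$.

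The main obstacle is the passage from the regularized smooth solutions to the genuine viscosity solution: one must verify that the approximants constructed by Lemma \ref{lem3-8} actually converge back to the \emph{given} solution $u$, not merely to some viscosity solution with the right boundary data. This is where the Lipschitz regularity of $u$ (Lemma \ref{lem2-1}, transferred to $u$ via the uniform convergence of Lipschitz approximants) becomes essential, since the comparison principle in Proposition \ref{pro2-4} requires one of the two competitors to be Lipschitz in $x$. The other delicate point is book-keeping the smallness hypothesis $\|D_{x,t}a\|_{L^\infty(Q_1)}\le\kappa$ under the intrinsic rescaling $v(x,t)=\frac{1}{\rho r}u(rx,\rho^{-q}r^2t)$: the rescaled coefficient $\overline a(x,t)=\rho^{s-q}a(rx,\rho^{-q}r^2 t)$ has its $C^1$-norm multiplied by factors in $r$ and $\rho$, and one must choose $r$ small (and hence restrict attention to $Q_{1/2}$ via iteration) to preserve the smallness condition required by Theorem \ref{thm3-6}.
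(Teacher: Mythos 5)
Your proposal is correct and follows essentially the same approach as the paper, which literally says the proof is identical to that of \cite[Theorem 1]{JS17} given Theorem \ref{thm3-6}, Propositions \ref{pro2-3}, \ref{pro2-4}, \ref{pro3-7} and Lemma \ref{lem3-8}; you spell out what that reference compresses into one line. One small point worth flagging in the comparison step: Proposition \ref{pro2-4} as stated requires the \emph{supersolution} to be Lipschitz in $x$, so the direction $u\le \widetilde u$ follows immediately from $\widetilde u$ being Lipschitz, but the reverse direction $\widetilde u\le u$ (with $\widetilde u$ the subsolution and $u$ the supersolution) is not literally covered by the stated hypotheses unless you also know $u$ is Lipschitz. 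This is not a genuine gap — the Ishii--Lions argument of Section \ref{sec5} applies to viscosity (not only smooth) solutions and so yields Lipschitz regularity of $u$ directly, and in any event the proof of Proposition \ref{pro4-1} can be symmetrized (the key bound $|\eta_j|\le Cl$ can be derived from the Lipschitz continuity of either competitor) — but it deserves a sentence rather than the parenthetical appeal to $\widetilde u$ alone.
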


\begin{proof}
Given Theorem \ref{thm3-6}, Propositions \ref{pro2-3}, \ref{pro2-4}, \ref{pro3-7} and Lemma \ref{lem3-8}, the proof of this theorem is identical to that of \cite[Theorem 1]{JS17}.
\end{proof}

\subsection{H\"{o}lder regularity of spatial gradients in the case that $\|D_{x,t}a(x,t)\|_{L^\infty(Q_1)}$ is finite}

In this subsection, we prove the H\"{o}lder estimates on the gradients of solutions to equation \eqref{main} under the assumption that $D_{x,t}a(x,t)$ has a general bound instead of a small bound (less than 1). Now define
$$
\hat{u}(x,t)=\frac{1}{\epsilon}u(\epsilon x,\epsilon^2t), \quad \hat{a}(x,t)=a(\epsilon x,\epsilon^2t)
$$
with $0<\epsilon<1$. For simplicity, let $(y,s):=(\epsilon x,\epsilon^2t)$, then
\begin{align*}
&\partial_t \hat u(x,t)=\epsilon \partial_su(\epsilon x,\epsilon^2t), \\
&\partial_{x_i}\hat u(x,t)= \partial_{y_i}u(\epsilon x,\epsilon^2t), \\
&\partial_{x_ix_j}\hat u(x,t)=\epsilon \partial_{y_iy_j}u(\epsilon x,\epsilon^2t).
\end{align*}
Therefore if $u$ is a solution to \eqref{main} in $Q_1$, then we can easily check that $\hat{u}$ solves (in the viscosity sense)
\begin{equation}
\label{3-20}
\partial_t \hat{u}=[|D \hat{u}|^q+\hat{a}(x,t)|D \hat{u}|^s]\left(\delta_{ij}+(p-2)\frac{\hat{u}_i \hat{u}_j}{|D \hat{u}|^2}\right)\hat{u}_{ij}
\end{equation}
in $Q_{\epsilon^{-1}}$ and moreover
$$
\|D_{x,t}\hat{a}\|_{L^\infty(Q_{\epsilon^{-1}})}\leq \epsilon\|D_{x,t}a\|_{L^\infty(Q_1)}<\kappa
$$
by selecting
$$
\epsilon\leq\frac{\kappa}{\|D_{x,t}a\|_{L^\infty(Q_1)}+1}.
$$
By the dependencies of $\kappa$ (see Theorem \ref{thm3-9}), we know that $\epsilon$ depends only on $n,p,q,s,a^-,a^+$ and $\|D_{x,t}a\|_{L^\infty(Q_1)}$. Notice that the framework of equation \eqref{3-20} is the same as that of \eqref{main} (with $\|D_{x,t}a\|_{L^\infty(Q_1)}$ being small). Thus this allows us to make use of these results obtained above to demonstrate the interior H\"{o}lder continuity of gradients of the solutions to \eqref{3-20} and interior H\"{o}lder continuity of solutions in the time variable. In turn, by rescaling back, we can derive the local $C^{1,\alpha}$ regularity of solutions, $u$, to \eqref{main} under the condition that $\|D_{x,t}a(x,t)\|_{L^\infty(Q_1)}\leq A$. 


As has been stated above, we now conclude the proof of Theorem \ref{thm0}.

\section{Comparison principle and stability}
\label{sec4}

In this part, we are ready to show the comparison principle and stability property for the viscosity solution. When proving comparison principle, we will make use of Ishii-Lions' method. Here we consider these two properties in a more general domain. Let $\Omega$ be a bounded domain in $\mathbb{R}^n$. We denote a general parabolic cylinder by $\Omega_T:=\Omega\times [0,T)$, and $\partial_p\Omega_T$ stands for its parabolic boundary.

For the convenience of readers, here we repeat the statement before proceeding with the proof. Let $Sym(n)$ stand for the set of all symmetric $n\times n$ real matrices.

\begin{proposition}
\label{pro4-1}
Let the function $a(x,t)>0$ be Lipschitz continuous in time-space variables. Assume that $u$ and $v$ are a viscosity subsolution and a locally uniformly Lipschitz continuous viscosity supersolution in $x$-variable to \eqref{main} in $\Omega_T$, respectively. If $u\leq v$ on $\partial_p\Omega_T$, then
$$
u\leq v \quad\text{in } \Omega_T.
$$
\end{proposition}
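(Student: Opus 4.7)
The plan is to argue by contradiction via doubling of variables, combined with the Crandall--Ishii maximum principle for semicontinuous functions. Assume $\sup_{\Omega_T}(u-v) > 0$. I would first replace $u$ by the strict subsolution $w(x,t) := u(x,t) - \sigma/(T-t)$ for small $\sigma > 0$: in the viscosity sense $w$ satisfies $\partial_t w - [|Dw|^q + a(x,t)|Dw|^s]\Delta_p^N w \leq -\sigma/(T-t)^2$, while $w \leq u \leq v$ on $\partial_p\Omega_T$ and $\sup_{\Omega_T}(w-v) > 0$ persists. Fix an exponent $\beta > \max\{2,(q+2)/(q+1)\}$ and for large $\tau > 0$ consider
$$\Phi_\tau(x,y,t) := w(x,t) - v(y,t) - \frac{\tau}{\beta}|x-y|^\beta,$$
with interior maximizer $(\hat x_\tau, \hat y_\tau, \hat t_\tau)$. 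Standard arguments yield $|\hat x-\hat y|\to 0$ and $\tau|\hat x-\hat y|^\beta\to 0$. Crucially, the local Lipschitz continuity of $v$ in $x$ applied to $\Phi_\tau(\hat x,\hat y,\hat t)\geq\Phi_\tau(\hat x,\hat x,\hat t)$ gives $\tau|\hat x-\hat y|^{\beta-1}\leq\beta L$, so the penalty gradient $p:=\tau|\hat x-\hat y|^{\beta-2}(\hat x-\hat y)$ satisfies $|p|\leq\beta L$ uniformly in $\tau$.

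By the maximum principle for semicontinuous functions, there exist jets $(\alpha_1,p,X)\in\overline{\mathcal{P}}^{2,+}w(\hat x,\hat t)$ and $(\alpha_2,p,Y)\in\overline{\mathcal{P}}^{2,-}v(\hat y,\hat t)$ with $\alpha_1=\alpha_2$ and the refined matrix estimate $\langle X\xi,\xi\rangle - \langle Y\eta,\eta\rangle\leq C\tau|\hat x-\hat y|^{\beta-2}|\xi-\eta|^2$ for all $\xi,\eta\in\mathbb{R}^n$. The case $\hat x_\tau=\hat y_\tau$ is ruled out as follows: with $\beta>2$ the second derivative of the penalty vanishes at the diagonal, forcing $X=Y=0$ and $p=0$; the continuous extension of the operator at $(p,X)=(0,0)$ is zero, so the viscosity inequalities give $\alpha_1\leq 0\leq\alpha_2$, while the strict subsolution property of $w$ forces $\alpha_1\leq -\sigma/(T-\hat t)^2 < 0$, contradicting $\alpha_1=\alpha_2$. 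Hence $\hat x_\tau\neq\hat y_\tau$, and the viscosity inequalities apply classically to give, after subtraction,
$$\frac{\sigma}{(T-\hat t)^2}\leq c(\hat x,\hat t;p)\,\mathrm{tr}(A(p)X) - c(\hat y,\hat t;p)\,\mathrm{tr}(A(p)Y),$$
where $c(x,t;p):=|p|^q + a(x,t)|p|^s$ and $A(p):=I+(p-2)\hat p\otimes\hat p > 0$.

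To control the right-hand side I would factor the diffusion as $c(x,t;p)A(p)=\Sigma(x,t;p)\Sigma(x,t;p)^T$ with $\Sigma:=c^{1/2}A^{1/2}$, and apply the refined matrix estimate on the vectors $\xi_i=\Sigma(\hat x)e_i$, $\eta_i=\Sigma(\hat y)e_i$ to obtain $\mathrm{RHS}\leq C\tau|\hat x-\hat y|^{\beta-2}\|\Sigma(\hat x)-\Sigma(\hat y)\|_F^2$. Lipschitz continuity of the scalar square root together with Lipschitz continuity of $a$ yields $\|\Sigma(\hat x)-\Sigma(\hat y)\|_F^2\leq (n+p-2)L_a^2|\hat x-\hat y|^2|p|^{2s}/(c(\hat x)+c(\hat y))$. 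A short case analysis, using either $c\gtrsim|p|^q$ when $q<0$ or $c\gtrsim a^-|p|^s$ when $q\geq 0$, together with the substitution $|\hat x-\hat y|\lesssim\tau^{-1/(\beta-1)}$ coming from $|p|\leq\beta L$, shows that the final estimate is $O(\tau^{-1/(\beta-1)})$, which vanishes as $\tau\to\infty$ and contradicts the strictly positive lower bound $\sigma/T^2$. The main obstacle is controlling the cross-coefficient error $[a(\hat x)-a(\hat y)]|p|^s\mathrm{tr}(A(p)Y)$: the naive estimate $\|Y\|\lesssim\tau|\hat x-\hat y|^{\beta-2}$ only gives a contribution of order $|p|^{s+1}$, which is bounded but does not vanish; it is precisely the matrix square root factorization---whose validity relies on the positive definiteness of $A(p)$ together with the lower bound on $c$ furnished by the admissible range of $q,s$---that trades this for the vanishing combination $\tau|\hat x-\hat y|^\beta\to 0$.
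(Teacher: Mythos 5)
Your overall strategy is the same as the paper's: doubling of variables, converting to a strict sub/supersolution, Jensen--Ishii on the maximum of the doubled function, using the Lipschitz continuity of $v$ in $x$ to bound the penalty gradient $|p|$, and the matrix square--root factorization combined with Lipschitz continuity of $M\mapsto M^{1/2}$ to control the $x$-dependence coming from $a(x,t)$. Your variant of factoring the full diffusion $c(x,t;p)A(p)$ rather than only the $a(x,t)|p|^s A(p)$ piece (as the paper does, separating $F_1$ and $F_2$ and disposing of $F_1$ by degenerate ellipticity $X\leq Y$) is sound and gives the same final bound, since the numerator $\|\Sigma(\hat x)-\Sigma(\hat y)\|_F^2$ only sees the $a$-difference and the denominator can be bounded below by the $a|p|^s$ part of $c$. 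Also, your choice $\beta>\max\{2,(q+2)/(q+1)\}$ is actually equivalent to the paper's $l>\max\{2,(q+2)/(q+1),(s+2)/(s+1)\}$ because $r\mapsto(r+2)/(r+1)$ is decreasing and $q\leq s$.

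However, your treatment of the diagonal case $\hat x_\tau=\hat y_\tau$ has a genuine gap. First, the claim that a vanishing Hessian of the penalty forces $X=Y=0$ is not correct: the Jensen--Ishii matrix inequality with a zero block matrix only gives $X\leq 0\leq Y$ (together with a lower bound depending on the auxiliary parameter), not $X=Y=0$. Second, and more seriously, the phrase ``the continuous extension of the operator at $(p,X)=(0,0)$ is zero'' is false in the singular range $-1<q\leq 0$: the factor $|p|^q$ blows up as $p\to 0$, and there is no continuous extension of $F$ to $p=0$. This is precisely why the paper adopts the Juutinen--Lindqvist--Manfredi definition (Definition~2.1) that tests with $\varphi\in C^2$ having $D\varphi(x,t)\neq 0$ for $x\neq x_0$ and evaluates a $\limsup$/$\liminf$; the equivalence between that definition and a jet-based one is not automatic at a degenerate point. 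The paper's proof handles the diagonal case by working directly with the $\limsup$/$\liminf$ characterization: there $\Psi_j$ is also doubled in time (an extra penalty $\frac j2(t-s)^2$), the explicit $C^2$ test function has $D\phi(y,s)\neq 0$ away from the base point, the spatial part of the operator vanishes in the limit (the exponents $q(l-1)+l-2$ and $s(l-1)+l-2$ are positive by the choice of $l$), and the surviving time derivatives $j(t_j-s_j)$ produce the contradiction. Your single time variable plus strictness $\alpha_1\leq -\sigma/(T-\hat t)^2$ could in principle be made to work, but you would still need to argue at a possibly singular point via the $\limsup$/$\liminf$ definition rather than via jets, and you would need to show that the spatial part of the $\limsup$ goes to zero, which requires the $\beta$-dependent exponent computation (not the unjustified claim $X=Y=0$). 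As written, the diagonal case is not correctly disposed of.

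A smaller point: the case analysis ``use $c\gtrsim|p|^q$ when $q<0$'' can fail because $|p|^{2s-q}$ need not stay bounded as $|p|\to 0$ (e.g.\ $q=-0.5$, $s=-0.4$ gives $2s-q<0$); the robust route is to use $c\gtrsim a^-|p|^s$ throughout and rewrite $\tau|\hat x-\hat y|^{\beta}=|p||\hat x-\hat y|$, so that the surviving factor $|p|^{s+1}$ is bounded because $s+1>0$, exactly as the paper does when $-1<s<0$.
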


\begin{proof}
For simplicity, we can first suppose that $v$ is a strict supersolution, that is,
$$
\partial_tv-[|D v|^q+a(x,t)|D v|^s]\left(\Delta v+(p-2)\left\langle D^2v\frac{D v}{|D v|},\frac{D v}{|D v|}\right\rangle \right)>0
$$
in the viscosity sense by considering $w:=v+\frac{\varepsilon}{T-t}$ instead. Indeed, we let $\psi\in C^2(\Omega_T)$, with $D \psi(x,t)\neq0$ for $x\neq x_0$, be such that $w-\psi$ attains a local minimum at $(x_0,t_0)\in \Omega_T$, then so does $v-\varphi$ by denoting $\varphi(x,t):=\psi(x,t)-\frac{\varepsilon}{T-t}$.
Since $v$ is a viscosity supersolution, then it yields that
\begin{align*}
0\leq &\limsup_{\stackrel{(x,t)\rightarrow(x_0,t_0)}{x\neq x_0}}\left(\partial_t \varphi(x,t)-[|D\varphi(x,t)|^q+a(x,t)|D\varphi(x,t)|^s]\Delta_p^N\varphi(x,t)\right)\\
\leq&-\frac{\varepsilon}{(T-t_0)^2}+\limsup_{\stackrel{(x,t)\rightarrow(x_0,t_0)}{x\neq x_0}}\left(\partial_t \psi(x,t)-[|D\psi(x,t)|^q+a(x,t)|D\psi(x,t)|^s]\Delta_p^N\psi(x,t)\right),
\end{align*}
and further
$$
0<\limsup_{\stackrel{(x,t)\rightarrow(x_0,t_0)}{x\neq x_0}}\left(\partial_t \psi(x,t)-[|D\psi(x,t)|^q+a(x,t)|D\psi(x,t)|^s]\Delta_p^N\psi(x,t)\right),
$$
which implies that $w$ is a strict viscosity solution by Definition \ref{def1}.

In order to show this assertion, we argue by contradiction. If the conclusion does not hold, then we may find some point $(\hat{x},\hat{t})\in \Omega\times (0,T)$ such that
$$
\omega_0:=u(\hat{x},\hat{t})-v(\hat{x},\hat{t})=\sup_{\Omega_T}(u-v)>0.
$$
Now define
$$
\Theta_j(x,y,t,s):=u(x,t)-v(y,s)-\Psi_j(x,y,t,s),
$$
where $\Psi_j(x,y,t,s)=\frac{j}{l}|x-y|^l+\frac{j}{2}(t-s)^2$ with
$$
l>\max\left\{2,\frac{q+2}{q+1},\frac{s+2}{s+1}\right\}.
$$

We denote by $(x_j,y_j,t_j,s_j)$ the maximum point of $\Theta_j$ in $\overline{\Omega}\times\overline{\Omega}\times[0,T)\times[0,T)$. It is easy to know that $(x_j,y_j,t_j,s_j)\in \Omega\times\Omega\times(0,T)\times(0,T)$ (for $j$ large enough) and $(x_j,y_j,t_j,s_j)\rightarrow (\hat{x},\hat{x},\hat{t},\hat{t})$ as $j\rightarrow \infty$ by Lemma 7.2 in \cite{CIL92}. In the rest of proof, we shall distinguish between two scenarios that $x_j=y_j$ and $x_j\neq y_j$.

\smallskip

\textbf{Case 1.} $x_j=y_j$.  Observe that, by the choice of $(x_j,y_j,t_j,s_j)$,
$$
u(x_j,t_j)-v(y_j,s_j)-\Psi_j(x_j,y_j,t_j,s_j)\geq u(x_j,t_j)-v(y,s)-\Psi_j(x_j,y,t_j,s).
$$
Let
$$
\phi(y,s):=-\Psi_j(x_j,y,t_j,s)+\Psi_j(x_j,y_j,t_j,s_j)+v(y_j,s_j).
$$
Clearly, $v(y,s)-\phi(y,s)$ has a local minimum at $(y_j,s_j)$. We first evaluate
$$
\partial_s\phi=j(t_j-s), \quad D\phi=j|x_j-y|^{l-2}(x_j-y)
$$
and
$$
D^2\phi=j|x_j-y|^{l-2}I+j(l-2)|x_j-y|^{l-2}\frac{x_j-y}{|x_j-y|}\otimes\frac{x_j-y}{|x_j-y|},
$$
where $\xi\otimes\xi$ is the matrix with entries $\xi_i\xi_j$ for a vector $\xi\in\mathbb{R}^n$. Obviously, $D\phi(y,s)\neq0$ for $y\neq x_j(=y_j)$. Owing to $v$ being a strict supersolution, we have
\begin{equation}
\label{4-1}
0<\limsup_{\stackrel{(y,s)\rightarrow(y_j,s_j)}{y\neq y_j}}\left(\partial_s \phi(y,s)-[|D\phi(y,s)|^q+a(y,s)|D\phi(y,s)|^s]\Delta_p^N\phi(y,s)\right).
\end{equation}
Next we carefully compute
\begin{align*}
&\quad\left\langle D^2\phi\frac{D \phi}{|D \phi|},\frac{D \phi}{|D \phi|}\right\rangle\\
&=\left\langle \left(j|x_j-y|^{l-2}I+j(l-2)|x_j-y|^{l-2}\frac{x_j-y}{|x_j-y|}\otimes\frac{x_j-y}{|x_j-y|}\right)\frac{x_j-y }{|x_j-y|},\frac{x_j-y}{|x_j-y|}\right\rangle\\
&=j(l-1)|x_j-y|^{l-2}
\end{align*}
and then
\begin{align*}
&[|D\phi|^q+a(y,s)|D\phi|^s]\left(\mathrm{tr}(D^2\phi)+(p-2)\left\langle D^2\phi\frac{D \phi}{|D \phi|},\frac{D \phi}{|D \phi|}\right\rangle\right)\\
=&\left[(j|x_j-y|^{l-1})^q+a(y,s)(j|x_j-y|^{l-1})^s\right]\left(j((n+l-2)+(p-2)(l-1))|x_j-y|^{l-2}\right)\\
=&\left(n+(p-2)(l-1)+l-2\right)\left[j^{q+1}|x_j-y|^{q(l-1)+l-2}+a(y,s)j^{s+1}|x_j-y|^{s(l-1)+l-2}\right],
\end{align*}
where the powers of $|x_j-y|$, $q(l-1)+l-2$ and $s(l-1)+l-2$, are positive, by the definition of $l$. From the above estimate, \eqref{4-1} turns into
$$
j(t_j-s_j)>0.
$$
On the other hand, we can see that
$$
\psi(x,t):=\Psi_j(x,y_j,t,s_j)-\Psi_j(x_j,y_j,t_j,s_j)+u(x_j,t_j)
$$
is a good testing function with respect to $u$ at $(x_j,t_j)$. In a similar way, we will readily get
\begin{align}
\label{4-2}
&j(t_j-s_j) \nonumber\\
=&\liminf_{\stackrel{(x,t)\rightarrow(x_j,t_j)}{x\neq x_j}}\left(\partial_t \psi(x,t)-[|D\psi(x,t)|^q+a(x,t)|D\psi(x,t)|^s]\Delta_p^N\psi(x,t)\right)\leq0.
\end{align}
Combining \eqref{4-1} and \eqref{4-2}, we have
$$
0<j(t_j-s_j)-j(t_j-s_j)=0,
$$
which is a contradiction.

\smallskip

\textbf{Case 2.} $x_j\neq y_j$. In this case, we shall employ the definition with jets. Applying theorem of sums (see \cite{CIL92}), for each $\mu>0$, there exist $X_j,Y_j\in Sym(n)$ such that
\begin{equation}
\label{4-3}
(\partial_t\Psi_j,D_x \Psi_j,X_j)\in\overline{\mathcal{P}}^{2,+}u(x_j,t_j),
\end{equation}
\begin{equation}
\label{4-4}
(-\partial_s\Psi_j,-D_y \Psi_j,Y_j)\in\overline{\mathcal{P}}^{2,-}v(y_j,s_j)
\end{equation}
and
\begin{equation*}
\left(\begin{array}{cc}
X_j & \\
 &-Y_j
\end{array}
\right)\leq D^2\Psi_j+\frac{1}{\mu}(D^2\Psi_j)^2,
\end{equation*}
where all the derivatives are evaluated at $(x_j,y_j,t_j,s_j)$ and
\begin{equation*}
D^2\Psi_j=\left(\begin{array}{cc}
D_{xx}\Psi_j&D_{xy}\Psi_j\\
D_{yx}\Psi_j&D_{yy}\Psi_j
\end{array}
\right)
=:\left(\begin{array}{cc}
B&-B\\
-B&B
\end{array}
\right)
\end{equation*}
with
$$
B:=j|x_j-y_j|^{l-2}I+(l-2)j|x_j-y_j|^{l-4}(x_j-y_j)\otimes(x_j-y_j).
$$
So we get
\begin{equation}
\label{4-5}
\left(\begin{array}{cc}
X_j & \\
 &-Y_j
\end{array}
\right)\leq\left(\begin{array}{cc}
B&-B\\
-B&B
\end{array}
\right)+
\frac{2}{\mu}\left(\begin{array}{cc}
B^2&-B^2\\
-B^2&B^2
\end{array}
\right),
\end{equation}
with
$$
B^2:=j^2|x_j-y_j|^{2l-4}I+l(l-2)j^2|x_j-y_j|^{2l-6}(x_j-y_j)\otimes(x_j-y_j).
$$
We plainly derive $X_j\leq Y_j$, i.e., $\langle(X_j-Y_j)\xi,\xi\rangle\leq0$ for all $\xi\in\mathbb{R}^n$. In the sequel, we choose $\mu=j$ in \eqref{4-5}. Now from \eqref{4-5}, we can get a more accurate estimate on $X_j-Y_j$ as follows:
\begin{equation}
\label{4-6}
X_j\xi\cdot\xi-Y_j\eta\cdot\eta\leq j\left[(l-1)|x_j-y_j|^{l-2}+2(l-1)^2|x_j-y_j|^{2(l-2)}\right]|\xi-\eta|^2
\end{equation}
for any $\xi,\eta\in\mathbb{R}^n$.

Next we give some notations that will be utilized later. Denote
$$
F_1(\xi,M):=|\xi|^q\left(\mathrm{tr} M+(p-2)\left\langle M\frac{\xi}{|\xi|},\frac{\xi}{|\xi|}\right\rangle\right),
$$
\begin{align*}
F_2(x,t,\xi,M)&:=a(x,t)|\xi|^s\left(\mathrm{tr} M+(p-2)\left\langle M\frac{\xi}{|\xi|},\frac{\xi}{|\xi|}\right\rangle\right)\\
&=\mathrm{tr}(A(x,t,\xi)M),
\end{align*}
where
$$
A(x,t,\xi):=a(x,t)|\xi|^s\left(I+(p-2)\frac{\xi}{|\xi|}\otimes\frac{\xi}{|\xi|}\right)
$$
with $(x,t)\in\Omega_T, M\in Sym(n)$.

Let
$$
\eta_j:=D_x \Psi_j=-D_y\Psi_j=j|x_j-y_j|^{l-2}(x_j-y_j).
$$
It is essential that $\eta_j$ is nonzero, which allows us to exploit jets. Because $u$ is a subsolution and $v$ is a strict supersolution, we arrive at
$$
-\partial_s\Psi_j-F_1(\eta_j,Y_j)-F_2(y_j,s_j,\eta_j,Y_j)>0
$$
and
$$
\partial_t\Psi_j-F_1(\eta_j,X_j)-F_2(x_j,t_j,\eta_j,X_j)\leq0
$$
by \eqref{4-3} and \eqref{4-4}. Subtracting these two inequalities above, we get
\begin{align}
\label{4-7}
0&<-\partial_s\Psi_j-\partial_t\Psi_j+F_1(\eta_j,X_j)-F_1(\eta_j,Y_j)+F_2(x_j,t_j,\eta_j,X_j)-F_2(y_j,s_j,\eta_j,Y_j) \nonumber\\
&=:J_1+J_2+J_3.
\end{align}
First, notice that
$$
-\partial_s\Psi_j=j(t_j-s_j)=\partial_t\Psi_j,
$$
then we get
$$
J_1=0.
$$
Second, through the increasing monotonicity of $F_1(\xi,M)$ with respect to the second variable $M$, and applying $X_j\leq Y_j$, we obtain
$$
J_2=F_1(\eta_j,X_j)-F_1(\eta_j,Y_j)\leq 0.
$$
In turn, we are going to estimate the third term $J_3$, which is the most delicate part of the proof. Since $a(x,t)>0$, the matrix $A(x,t,\xi)$ is positive definite so that it has matrix square root denoted by $A^\frac{1}{2}(x,t,\xi)$. By $A^\frac{1}{2}_k(x,t,\xi)$, we mean the $k$-th column of $A^\frac{1}{2}(x,t,\xi)$. Then it yields that
\begin{align}
\label{4-8}
J_3&=\mathrm{tr}(A(x_j,t_j,\eta_j)X_j)-\mathrm{tr}(A(y_j,s_j,\eta_j)Y_j) \nonumber\\
&=\sum^n_{k=1}X_jA^\frac{1}{2}_k(x_j,t_j,\eta_j)\cdot A^\frac{1}{2}_k(x_j,t_j,\eta_j)-\sum^n_{k=1}Y_jA^\frac{1}{2}_k(y_j,s_j,\eta_j)\cdot A^\frac{1}{2}_k(y_j,s_j,\eta_j)  \nonumber\\
&\leq Cj|x_j-y_j|^{l-2}\|A^\frac{1}{2}(x_j,t_j,\eta_j)-A^\frac{1}{2}(y_j,s_j,\eta_j)\|^2_2 \nonumber\\
&\leq \frac{Cj|x_j-y_j|^{l-2}}{(\lambda_{\rm min}(A^\frac{1}{2}(x_j,t_j,\eta_j))+\lambda_{\rm min}(A^\frac{1}{2}(y_j,s_j,\eta_j)))^2}
\|A(x_j,t_j,\eta_j)-A(y_j,s_j,\eta_j)\|^2_2,
\end{align}
where the penultimate inequality is derived by \eqref{4-6} and the last inequality is obtained from the local Lipschitz continuity of $M\mapsto M^\frac{1}{2}$ (see \cite[page 410]{HJ85}). Here $\lambda_{\rm min}(M)$ stands for the smallest eigenvalue of a symmetric $n\times n$ matrix $M$. Let us mention that the inequality similar to \eqref{4-8} can be found in \cite[page 1484]{JLP10}.

We proceed with evaluating
\begin{align}
\label{4-9}
&\quad\|A(x_j,t_j,\eta_j)-A(y_j,s_j,\eta_j)\|_2   \nonumber\\
&=\left\|(a(x_j,t_j)-a(y_j,s_j))|\eta_j|^s\left(I+(p-2)\frac{\eta_j}{|\eta_j|}\otimes\frac{\eta_j}{|\eta_j|}\right)\right\|_2 \nonumber\\
&\leq|\eta_j|^s|a(x_j,t_j)-a(y_j,s_j)|(\sqrt{n}+|p-2|).
\end{align}
In addition,
\begin{equation}
\label{4-10}
\begin{split}
&\lambda_{\rm min}(A^\frac{1}{2}(x_j,t_j,\eta_j))=\lambda^\frac{1}{2}_{\rm min}(A(x_j,t_j,\eta_j))\geq \min\{1,\sqrt{p-1}\}|\eta_j|^\frac{s}{2}\sqrt{a(x_j,t_j)},\\
&\lambda_{\rm min}(A^\frac{1}{2}(y_j,s_j,\eta_j))\geq\min\{1,\sqrt{p-1}\}|\eta_j|^\frac{s}{2}\sqrt{a(y_j,s_j)}.
\end{split}
\end{equation}
Merging \eqref{4-10}, \eqref{4-9} with \eqref{4-8}, we finally have
\begin{align*}
J_3
&\leq\frac{Cj|x_j-y_j|^{l-2}(\sqrt{n}+|p-2|)^2}{\min\{1,p-1\}|\eta_j|^s(\sqrt{a(x_j,t_j)}+\sqrt{a(y_j,s_j)})^2}
|\eta_j|^{2s}|a(x_j,t_j)-a(y_j,s_j)|^2\\
&\leq C\left(\sqrt{a(x_j,t_j)}+\sqrt{a(y_j,s_j)}\right)^{-2}j|x_j-y_j|^{l-2}|\eta_j|^s(|x_j-y_j|^2+|t_j-s_j|^2),
\end{align*}
where we have used the assumption that $a(x,t)$ is Lipschitz continuous in $\Omega_T$, which implies that $|a(x,t)-a(y,s)|\leq C\sqrt{|x-y|^2+|t-s|^2}$. Thus \eqref{4-7} becomes
$$
0<C\left(\sqrt{a(x_j,t_j)}+\sqrt{a(y_j,s_j)}\right)^{-2}j|x_j-y_j|^{l-2}|\eta_j|^s(|x_j-y_j|^2+|t_j-s_j|^2)=:H_j.
$$
We now verify that $H_j$ tends to 0 as $j\rightarrow\infty$, which leads to a contradiction. Next, we split the proof into two cases.

Observe that
\begin{equation*}
\begin{split}
u(x_j,t_j)-v(x_j,t_j)&\leq \max_{\overline{\Omega}\times[0,T)}\{u(x,t)-v(x,t)\}\\
&\leq u(x_j,t_j)-v(y_j,s_j)-\frac{j}{l}|x_j-y_j|^l-\frac{j}{2}(t_j-s_j)^2.
\end{split}
\end{equation*}
This leads to
\begin{align}
\label{4-12}
\frac{j}{l}|x_j-y_j|^l+\frac{j}{2}(t_j-s_j)^2&\leq v(x_j,t_j)-v(y_j,s_j)   \nonumber\\
&\rightarrow v(\hat{x},\hat{t})-v(\hat{x},\hat{t})=0,
\end{align}
as $j\rightarrow\infty$, where we have utilized the fact that $v$ is continuous in $\Omega_T$. On the other hand,
\begin{align*}
&\quad u(x_j,t_j)-v(y_j,s_j)-\frac{j}{l}|x_j-y_j|^l-\frac{j}{2}(t_j-s_j)^2\\
&=\max_{\overline{\Omega}\times\overline{\Omega}\times[0,T)\times[0,T)}
\left\{u(x,t)-v(y,s)-\frac{j}{l}|x-y|^l-\frac{j}{2}(t-s)^2\right\}\\
&\geq u(x_j,t_j)-v(x_j,s_j)-\frac{j}{l}|x_j-x_j|^l-\frac{j}{2}(t_j-s_j)^2,
\end{align*}
i.e.,
$$
v(x_j,s_j)-v(y_j,s_j)\geq\frac{j}{l}|x_j-y_j|^l.
$$
By virtue of the uniform Lipschitz continuity of $v$ in the spatial variables, we have
$$
\frac{j}{l}|x_j-y_j|^l\leq C|x_j-y_j|
$$
and further
$$
\frac{j}{l}|x_j-y_j|^{l-1}\leq C.
$$
Hence it follows that
$$
|\eta_j|\leq Cl.
$$

If $s\geq0$, then it follows from \eqref{4-12}, $l>2$ and $|\eta_j|\leq Cl$  that
$$
H_j= C\left(\sqrt{a(x_j,t_j)}+\sqrt{a(y_j,s_j)}\right)^{-2}|\eta_j|^s(j|x_j-y_j|^l+j|t_j-s_j|^2|x_j-y_j|^{l-2})\rightarrow0
$$
by sending $j\rightarrow\infty$.

If $-1<s<0$, we arrive at
\begin{align*}
H_j&=C\left(\sqrt{a(x_j,t_j)}+\sqrt{a(y_j,s_j)}\right)^{-2}|\eta_j|^{s+1}\frac{j|x_j-y_j|^{l-2}}{j|x_j-y_j|^{l-1}}(|x_j-y_j|^2+|t_j-s_j|^2)\\
&=C\left(\sqrt{a(x_j,t_j)}+\sqrt{a(y_j,s_j)}\right)^{-2}(|\eta_j|^{s+1}|x_j-y_j|+|\eta_j|^{s+1}|t_j-s_j|^2|x_j-y_j|^{-1}).
\end{align*}
Due to $|\eta_j|$ is bounded and $s+1>0$, it yields that
$$
|\eta_j|^{s+1}|x_j-y_j|\rightarrow0
$$
by $j\rightarrow\infty$, as $x_j,y_j\rightarrow\hat{x}$. Furthermore, using \eqref{4-12}, we can justify the following limit,
\begin{align*}
|\eta_j|^{s+1}|t_j-s_j|^2|x_j-y_j|^{-1}&=\left(j|x_j-y_j|^{l-1}|t_j-s_j|^\frac{2}{s+1}|x_j-y_j|^\frac{-1}{s+1}\right)^{s+1}\\
&=\left(j|t_j-s_j|^\frac{2}{s+1}|x_j-y_j|^{l-1-\frac{1}{s+1}}\right)^{s+1}\\
&\rightarrow0
\end{align*}
as $j\rightarrow\infty$, where we need to notice that $\frac{2}{s+1}>2$ by $-1<s<0$, together with $l-1-\frac{1}{s+1}>0$ by $l>\frac{s+2}{s+1}$. In conclusion, we have proved that $H_j$ converge to 0 as $j\rightarrow\infty$. The proof now is completed.
\end{proof}

We now conclude this section with stability properties of viscosity solution.

\begin{proposition}
\label{pro4-2}
Let $\{u_i\}$ be a sequence of viscosity solutions to \eqref{2-1} in $\Omega_T$ with $\varepsilon_i\geq0$ such that $\varepsilon_i\rightarrow0$. Suppose that $u_i$ converges to $u$ locally uniformly in $\Omega_T$. Then $u$ is a solution to \eqref{main} in $\Omega_T$.
\end{proposition}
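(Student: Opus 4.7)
By symmetry, it suffices to verify that the limit $u$ is a viscosity subsolution of \eqref{main}; the supersolution case is analogous. Fix $(x_0,t_0)\in\Omega_T$ and $\varphi\in C^2(\Omega_T)$ such that $u-\varphi$ has a local maximum at $(x_0,t_0)$ and $D\varphi(x,t)\neq 0$ whenever $x\neq x_0$. The goal is
$$
\liminf_{\stackrel{(x,t)\rightarrow(x_{0},t_{0})}{x\neq x_{0}}}\Bigl(\partial_t \varphi(x,t)-[|D\varphi(x,t)|^q+a(x,t)|D\varphi(x,t)|^s]\Delta_p^N\varphi(x,t)\Bigr)\leq 0.
$$

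My first step would be the classical reduction to a strict maximum: replace $\varphi$ by $\tilde\varphi(x,t):=\varphi(x,t)+\sigma\bigl[|x-x_0|^{2N}+(t-t_0)^2\bigr]$ with $N$ large and $\sigma>0$ small. This makes $(x_0,t_0)$ a strict local maximum of $u-\tilde\varphi$. One must check that the perturbed test function still satisfies $D\tilde\varphi(x,t)\neq 0$ for $x\neq x_0$ in a small cylinder $Q_r(x_0,t_0)$; when $D\varphi(x_0,t_0)\neq 0$ this is immediate from continuity, and when $D\varphi(x_0,t_0)=0$ the hypothesis forces $D\varphi(x,t_0)=D^2\varphi(x_0,t_0)(x-x_0)+o(|x-x_0|)$ with $D^2\varphi(x_0,t_0)$ nondegenerate in the relevant directions, while the added gradient is only $O(|x-x_0|^{2N-1})$, so for $N\geq 2$ the sum cannot vanish for $x\neq x_0$ near $x_0$. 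Strictness and local uniform convergence then yield maximum points $(x_i,t_i)$ of $u_i-\tilde\varphi$ in $Q_r(x_0,t_0)$ with $(x_i,t_i)\to (x_0,t_0)$.

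Next I would apply the viscosity subsolution property of $u_i$ for \eqref{2-1} at $(x_i,t_i)$. Since the regularized equation is non-singular when $\varepsilon_i>0$ (and the singular case $\varepsilon_i=0$ is covered by $u_i$ already being a solution of \eqref{main}), this gives
$$
\partial_t \tilde\varphi(x_i,t_i)\leq \bigl[(|D\tilde\varphi|^2+\varepsilon_i^2)^{q/2}+a(x_i,t_i)(|D\tilde\varphi|^2+\varepsilon_i^2)^{s/2}\bigr]\Bigl(\delta_{jk}+(p-2)\tfrac{\tilde\varphi_j\tilde\varphi_k}{|D\tilde\varphi|^2+\varepsilon_i^2}\Bigr)\tilde\varphi_{jk}
$$
evaluated at $(x_i,t_i)$. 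The passage to the limit splits into two cases. If $D\tilde\varphi(x_0,t_0)\neq 0$, then since $\varepsilon_i\to 0$ the right-hand side converges by continuity to the unregularized operator at $(x_0,t_0)$, yielding the pointwise inequality $\partial_t\tilde\varphi-[|D\tilde\varphi|^q+a|D\tilde\varphi|^s]\Delta_p^N\tilde\varphi\leq 0$ there; undoing the perturbation gives the claim for $\varphi$. If $D\tilde\varphi(x_0,t_0)=0$, then by the construction of $\tilde\varphi$ we have $x_i\neq x_0$ for $i$ large (otherwise $D\tilde\varphi(x_i,t_i)=0$ would persist, whereas taking a nearby competitor shows the max can be shifted off the axis $\{x=x_0\}$), hence $D\tilde\varphi(x_i,t_i)\neq 0$, and dividing by this nonzero gradient term before sending $i\to\infty$ yields
$$
\partial_t\tilde\varphi(x_i,t_i)-[|D\tilde\varphi|^q+a|D\tilde\varphi|^s]\Delta_p^N\tilde\varphi(x_i,t_i)\leq o(1),
$$
so the liminf along the sequence $(x_i,t_i)\to(x_0,t_0)$ with $x_i\neq x_0$ is $\leq 0$, which is exactly the liminf inequality in the definition.

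\textbf{Main obstacle.} The delicate point is the singular regime $-1<q<0$ with $D\varphi(x_0,t_0)=0$, where the limit operator in \eqref{main} is undefined at the contact point. Handling this requires that the perturbed test function $\tilde\varphi$ retain the nondegeneracy $D\tilde\varphi(x,t)\neq 0$ for $x\neq x_0$ and that one can always find approximating maxima $(x_i,t_i)$ off the axis $x=x_0$, so that the liminf in the definition is actually realized along the sequence produced by the stability argument; both rely on the careful choice of the polynomial perturbation and the nondegeneracy consequences of the hypothesis on $D\varphi$.
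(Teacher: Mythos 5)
Your overall strategy --- locate maximizers $(x_i,t_i)\to(x_0,t_0)$ of $u_i-\tilde\varphi$ via local uniform convergence, invoke the pointwise viscosity inequality for the non-singular regularized equation \eqref{2-1} at $(x_i,t_i)$, and let $i\to\infty$ --- is the same as the paper's. The paper is terser (it does not perform the strict-extremum perturbation explicitly, nor split on whether $D\varphi(x_0,t_0)$ vanishes), so your attempt to fill in these steps is well intentioned; but the specific arguments you supply for the singular regime are incorrect.

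Concretely: (1) the claim that $D\varphi(x,t)\neq 0$ for $x\neq x_0$ forces $D^2\varphi(x_0,t_0)$ to be ``nondegenerate in the relevant directions'' is false --- $\varphi(x)=|x-x_0|^4$ has $D\varphi(x)=4|x-x_0|^2(x-x_0)\neq 0$ off $x_0$ yet $D^2\varphi(x_0)=0$, so your estimate that the perturbation gradient $O(|x-x_0|^{2N-1})$ cannot cancel $D\varphi$ does not go through in general; (2) your justification that $x_i\neq x_0$ for large $i$ is wrong: if $x_i=x_0$ then $D\tilde\varphi(x_i,t_i)=D\varphi(x_0,t_i)$ (the added polynomial has zero $x$-gradient on the axis), which need not vanish, so the parenthetical ``$D\tilde\varphi(x_i,t_i)=0$ would persist'' does not apply, and the assertion that one can ``shift the max off the axis'' by a nearby competitor is not an argument; (3) the phrase ``dividing by this nonzero gradient term before sending $i\to\infty$'' does not address the actual difficulty, namely that when $-1<q<0$ both $|D\tilde\varphi(x_i,t_i)|\to 0$ and $\varepsilon_i\to 0$ can interact in an uncontrolled way, so the $\varepsilon_i$-regularized operator evaluated at $(x_i,t_i)$ need not be comparable to the unregularized one appearing in the $\liminf$/$\limsup$ of Definition~\ref{def1}. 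These are precisely the delicate points that a complete proof must control; neither the nondegeneracy shortcut nor the off-axis shift does so.
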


\begin{proof}
We only prove that $u$ is a viscosity supersolution to \eqref{main}. The case of subsolution then follows in a similar way. Let $\varphi\in C^2(Q_1)$ be such that $u-\varphi$ reaches a local minimum at $(x_0,t_0)\in \Omega_T$ and moreover $D \varphi(x,t)\neq0$ for $x\neq x_0$. Taking into account that $u_i$ converges to $u$ locally uniformly, we can find a sequence $\{(x_i,t_i)\}\subset \Omega_T$ satisfying $(x_i,t_i)\rightarrow(x_0,t_0)$ as $i\rightarrow\infty$, such that $u_i-\varphi$ attains a local minimum at $(x_i,t_i)$. Since $u_i$ is a viscosity supersolution to \eqref{2-1}, we have
\begin{align*}
0&\leq \partial_t\varphi(x_i,t_i)-\left[(|D\varphi(x_i,t_i)|^2+\varepsilon_i^2)^\frac{q}{2}+a(x_i,t_i)(|D\varphi(x_i,t_i)|^2+\varepsilon_i^2)^\frac{s}{2}\right]\\
&\quad\cdot \left(\mathrm{tr}D^2\varphi(x_i,t_i)+(p-2)\frac{D^2\varphi(x_i,t_i)D\varphi(x_i,t_i)\cdot D\varphi(x_i,t_i)}{|D\varphi(x_i,t_i)|^2+\varepsilon_i^2}\right).
\end{align*}
Furthermore, on account of $(x_i,t_i)\rightarrow(x_0,t_0)$, we hence conclude that
\begin{equation*}
0\leq
\limsup_{\stackrel{(x,t)\rightarrow(x_0,t_0)}{x\neq x_0}}\left(\partial_t \varphi-[|D\varphi|^q+a(x,t)|D\varphi|^s]\left(\mathrm{tr}D^2\varphi+(p-2)\left\langle
D^2\varphi\frac{D \varphi}{|D \varphi|},\frac{D \varphi}{|D \varphi|}\right\rangle\right)\right),
\end{equation*}
which implies that $u$ is a viscosity supersolution to \eqref{main}.
\end{proof}

\section{Proof of Lipschitz continuity of solutions}
\label{sec5}

This section is devoted to showing the Lipschitz continuity of solutions to \eqref{2-1} with $\varepsilon\in[0,1)$, that is Lemma \ref{lem2-1}. Our proof follows roughly the similar lines as the one in \cite{Att20}. We divide the proof of Lemma \ref{lem2-1} into two steps. In the first step, we will make use of Ishii-Lions' method to infer the H\"{o}lder continuity of solutions in spatial variables. Subsequently, the H\"{o}lder continuity shall be improved into the Lipschitz continuity by employing again the Ishii-Lions' method in the second step.

To begin with, we prove the $C^{0,\gamma}$ estimates on solutions with respect to $x$-variable for all $\gamma\in(0,1)$.

\begin{lemma} [Local H\"{o}lder estimates]
\label{lem5-1}
Let the conditions \eqref{1-1} and \eqref{1-2} be in force. Let $u$ be a bounded viscosity solution to \eqref{2-1} with $\varepsilon\in[0,1)$ in $Q_1$. Assume that $a(x,t)\geq a^->0$ and $a(x,t)$ is uniformly Lipschitz continuous in $x$-variable. Then for any $\gamma\in(0,1)$, there is a positive constant $C$ depending only on $n,p,q,s,\gamma$, such that
$$
|u(x,t)-u(y,t)|\leq C\|u\|_{L^\infty(Q_1)}\left(1+\frac{C_{\rm lip}}{a^-}\right)|x-y|^\gamma
$$
for all $x,y\in B_{\frac{15}{16}}$ and $t\in\left(-\left(\frac{15}{16}\right)^2,0\right]$. Here $C_{\rm lip}$ is the same as the one in Lemma \ref{lem2-1}.
\end{lemma}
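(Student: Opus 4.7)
I would follow the Ishii--Lions doubling-of-variables scheme used for the comparison principle (Proposition \ref{pro4-1}), but doubling only in the spatial variable while keeping a time-coupling penalty $\tfrac{j}{2}(t-s)^2$ that is sent to infinity at the end. Fix $\gamma\in(0,1)$ and $(x_0,t_0)\in Q_{15/16}$, and for $L,\sigma>0$ and $j>0$ to be chosen, define
\begin{equation*}
\Psi_j(x,y,t,s):=u(x,t)-u(y,s)-L\omega(|x-y|)-\tfrac{j}{2}(t-s)^2-\sigma\bigl(|x-x_0|^2+|y-x_0|^2+(t-t_0)^2\bigr),
\end{equation*}
where $\omega$ is a smooth concave majorant of $r\mapsto r^\gamma$ on a small interval, with $\omega'(r)\simeq\gamma r^{\gamma-1}$ and $\omega''(r)\simeq-\gamma(1-\gamma)r^{\gamma-2}$ near $0$. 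Arguing by contradiction, I would assume the estimate fails with $L=L_0\|u\|_{L^\infty(Q_1)}(1+C_{\rm lip}/a^-)$ for a constant $L_0=L_0(n,p,q,s,\gamma)$ to be fixed. Then, for $\sigma$ small enough and $j$ large enough, $\sup\Psi_j>0$ is attained at an interior point $(\bar x,\bar y,\bar t,\bar s)$ with $\bar x\neq\bar y$, and $j(\bar t-\bar s)^2\to 0$ as $j\to\infty$ by Lemma~7.2 of \cite{CIL92}.

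\textbf{Jets and the key matrix-inequality computation.} The parabolic theorem of sums, applied exactly as in the proof of Proposition \ref{pro4-1}, yields jets $(\tau_1,p_x,X)\in\overline{\mathcal P}^{2,+}u(\bar x,\bar t)$ and $(\tau_2,p_y,Y)\in\overline{\mathcal P}^{2,-}u(\bar y,\bar s)$ with $p:=L\omega'(|\bar x-\bar y|)(\bar x-\bar y)/|\bar x-\bar y|$, $p_x=p+2\sigma(\bar x-x_0)$, $p_y=p-2\sigma(\bar y-x_0)$ (so $|p_x|,|p_y|\sim|p|\neq 0$, and the degenerate/singular behaviour of \eqref{2-1} at $Du=0$ is inactive), $\tau_1-\tau_2=O(\sigma)$, and the crucial inequality
\begin{equation*}
\langle(X-Y)\xi,\xi\rangle\le L\omega''(|\bar x-\bar y|)+O(\sigma),\qquad \xi:=\frac{\bar x-\bar y}{|\bar x-\bar y|},
\end{equation*}
together with $\|X\|+\|Y\|\le CL|\bar x-\bar y|^{\gamma-2}+C\sigma$. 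Writing the nonlinearity as $F(z,\tau,\eta,M)=[(|\eta|^2+\varepsilon^2)^{q/2}+a(z,\tau)(|\eta|^2+\varepsilon^2)^{s/2}]\mathrm{tr}(\mathcal A_\eta M)$ with $\mathcal A_\eta=I+(p-2)(\eta\otimes\eta)/(|\eta|^2+\varepsilon^2)$, subtracting the viscosity inequalities gives $0\le F(\bar x,\bar t,p_x,X)-F(\bar y,\bar s,p_y,Y)+O(\sigma)$, which I would split as the ``common-coefficient'' piece $[(|p|^2+\varepsilon^2)^{q/2}+a(\bar y,\bar s)(|p|^2+\varepsilon^2)^{s/2}]\mathrm{tr}(\mathcal A_p(X-Y))$ plus a coefficient-variation piece bounded by $C|a(\bar x,\bar t)-a(\bar y,\bar s)|\,|p|^s\|X\|$. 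Because $\mathcal A_p$ has eigenvalue $\min\{1,p-1\}>0$ along $\xi$, the first piece is no larger than $-c[|p|^q+a^-|p|^s]L|\omega''(|\bar x-\bar y|)|$ plus lower order, while the second piece, using the Lipschitz continuity of $a$ in $x$ and $|\bar t-\bar s|\to 0$, is at most $CC_{\rm lip}|\bar x-\bar y|\,|p|^s\cdot L|\bar x-\bar y|^{\gamma-2}$. Choosing $L_0$ large enough then makes the negative term strictly dominate, contradicting the subtraction; sending $j\to\infty$ followed by $\sigma\to 0$ finishes the argument.

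\textbf{Main obstacle.} The core difficulty, and the reason for the explicit factor $C_{\rm lip}/a^-$ in the constant, is that the coefficients $a(\bar x,\bar t)$ and $a(\bar y,\bar s)$ at the two evaluation points differ, so unlike in the homogeneous model \eqref{0-5} one cannot factor the ellipticity out of $F(\bar x,\bar t,\cdot)-F(\bar y,\bar s,\cdot)$ in one stroke. The remedy is to extract coercivity from the weaker (pessimistic) prefactor $|p|^q+a^-|p|^s$ and pay the price of absorbing the $a$-variation; balancing the two terms forces $L$ to scale like $C_{\rm lip}/a^-$. The ordered limits $j\to\infty$ then $\sigma\to 0$ are routine but must be arranged so that the $\sigma$-localization error does not swamp the gain from $\omega''<0$ in the matrix inequality.
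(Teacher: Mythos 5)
Your overall strategy — Ishii--Lions doubling with a concave barrier $\omega\simeq r^\gamma$, the matrix inequality yielding a strictly negative eigenvalue of $X-Y$ in the direction $\xi=(\bar x-\bar y)/|\bar x-\bar y|$, extracting coercivity from the pessimistic prefactor $|p|^q+a^-|p|^s$, and balancing against the $a$-variation to force the constant to scale like $C_{\rm lip}/a^-$ — is the same as the paper's. The difference is structural: you double in both $x$ and $t$ (with the penalty $\tfrac{j}{2}(t-s)^2$ and an eventual $j\to\infty$, $\sigma\to 0$), while the paper's proof of Lemma~\ref{lem5-1} doubles only in space, taking $L:=\sup_{(x,t),(y,t)}(u(x,t)-u(y,t)-L_1\phi(|x-y|)-\Psi(x,y,t))$ over a single time $t$ and invoking the same-time Jensen--Ishii lemma (\cite[Theorem 12.2]{Cra97}) with fixed, explicitly chosen $L_1,L_2$.

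This difference is not cosmetic and creates a genuine gap in your argument. Lemma~\ref{lem5-1} assumes only that $a(x,t)$ is Lipschitz in $x$, uniformly in $t$; no quantitative continuity in $t$ is given. Your ``coefficient-variation piece'' involves $|a(\bar x,\bar t)-a(\bar y,\bar s)|$, which splits as $|a(\bar x,\bar t)-a(\bar y,\bar t)|+|a(\bar y,\bar t)-a(\bar y,\bar s)|$. The first term is $\le C_{\rm lip}|\bar x-\bar y|$, but the second is not controlled by the hypothesis, and you cannot simply dismiss it via ``$|\bar t-\bar s|\to 0$'': for each fixed $j$ the gap is nonzero, and the multiplying factor $|p|^s\|X\|\sim L^{1+s}|\bar x_j-\bar y_j|^{s(\gamma-1)+\gamma-2}$ may blow up along the maximizing sequence if $|\bar x_j-\bar y_j|\to 0$, so a qualitative limit is not enough. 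To repair this you would either have to assume Lipschitz continuity of $a$ in $(x,t)$ — a strictly stronger hypothesis that would change the stated constant from $1+C_{\rm lip}/a^-$ to $1+\|D_{x,t}a\|_\infty/a^-$ — or track the rates carefully to rule out the bad scenario. The paper's same-time doubling avoids the entire issue, since both jets live at the common time $\bar t$ and the coefficient difference is exactly $a(\bar x,\bar t)-a(\bar y,\bar t)$. This is precisely why Lemma~\ref{lem5-1} can be proved under the weaker $x$-Lipschitz assumption, in contrast with the comparison principle (Proposition~\ref{pro4-1}), which does double in time and accordingly requires $a$ to be Lipschitz in both $x$ and $t$.
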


\begin{proof}
Fix $x_0,y_0\in B_{\frac{15}{16}}$ and $t_0\in\left(-\left(\frac{15}{16}\right)^2,0\right)$. We are ready to prove that there exist two suitable constants $L_1,L_2>0$ such that
$$
L:=\sup_{(x,t),(y,t)\in \overline{Q_{\frac{15}{16}}}}(u(x,t)-u(y,t)-L_1\phi(|x-y|)-\Psi(x,y,t))\leq 0,
$$
where
$$
\Psi(x,y,t)=\frac{L_2}{2}|x-x_0|^2
+\frac{L_2}{2}|y-y_0|^2+\frac{L_2}{2}|t-t_0|^2
$$
with $\phi(r):=r^\gamma$. Thriving for a contradiction. We suppose that $L>0$ and $(\overline{x},\overline{y},\overline{t})\in\overline{B_{\frac{15}{16}}}\times\overline{B_{\frac{15}{16}}}\times\left[-\left(\frac{15}{16}\right)^2,0\right]$ denotes a point reaching the maximum. By $L>0$, we know that $\overline{x}\neq\overline{y}$. Choosing
$$
L_2\geq \frac{32\|u\|_{L^\infty(Q_1)}}{(\min\{\mathrm{dist}((x_0,t_0),\partial Q_{15/16}),\mathrm{dist}((y_0,t_0),\partial Q_{15/16})\})^2},
$$
we get
$$
|\overline{x}-x_0|+|\overline{t}-t_0|\leq2\sqrt{\frac{2\|u\|_{L^\infty(Q_1)}}{L_2}}\leq\frac{\mathrm{dist}((x_0,t_0),\partial Q_{15/16})}{2}
$$
and
$$
|\overline{y}-y_0|+|\overline{t}-t_0|\leq\frac{(\mathrm{dist}(y_0,t_0),\partial Q_{15/16})}{2},
$$
so that $\overline{x},\overline{y}\in B_{15/16}$ and $\overline{t}\in\left(-\left(\frac{15}{16}\right)^2,0\right)$. In addition, if $L_1$ is large enough, we then find that
$$
|\overline{x}-\overline{y}|\leq\left(\frac{2\|u\|_{L^\infty(Q_1)}}{L_1}\right)^\frac{1}{\gamma}
$$
is sufficiently small, which is crucial and will be used later.

By Jensen-Ishii's lemma (see \cite[Theorem 8.3]{CIL92}), there are
$$
(\sigma+L_2(\overline{t}-t_0),\eta_1,X+L_2I)\in \overline{\mathcal{P}}^{2,+}u(\overline{x},\overline{t}),
$$
$$
(\sigma,\eta_2,Y-L_2I)\in \overline{\mathcal{P}}^{2,-}u(\overline{y},\overline{t}),
$$
where
\begin{align*}
&\eta_1=L_1\phi'(|\overline{x}-\overline{y}|)\frac{\overline{x}-\overline{y}}{|\overline{x}-\overline{y}|}+L_2(\overline{x}-x_0),\\
&\eta_2=L_1\phi'(|\overline{x}-\overline{y}|)\frac{\overline{x}-\overline{y}}{|\overline{x}-\overline{y}|}-L_2(\overline{y}-y_0).
\end{align*}
By choosing $L_1\geq C(\gamma)L_2$ large enough, there holds that
\begin{equation}
\label{5-1}
\frac{L_1}{2}\gamma|\overline{x}-\overline{y}|^{\gamma-1}\leq |\eta_1|,|\eta_2|\leq 2L_1\gamma|\overline{x}-\overline{y}|^{\gamma-1}.
\end{equation}
By means of Jensen-Ishii's lemma \cite[Theorem 12.2]{Cra97}, we could take $X,Y\in Sym(n)$ such that for any $\tau>0$ satisfying $\tau Z<I$, it holds that
\begin{equation}
\label{5-2}
-\frac{2}{\tau}\left(\begin{array}{cc}
I & \\
 &I
\end{array}
\right)\leq\left(\begin{array}{cc}
X&\\
&-Y
\end{array}
\right)\leq
\left(\begin{array}{cc}
Z^\tau&-Z^\tau\\
-Z^\tau&Z^\tau
\end{array}
\right),
\end{equation}
where
\begin{align*}
Z&=L_1\phi''(|\overline{x}-\overline{y}|)\frac{\overline{x}-\overline{y}}{|\overline{x}-\overline{y}|}
\otimes\frac{\overline{x}-\overline{y}}{|\overline{x}-\overline{y}|}+\frac{L_1\phi'(|\overline{x}-\overline{y}|)}{|\overline{x}-\overline{y}|}
\left(I-\frac{\overline{x}-\overline{y}}{|\overline{x}-\overline{y}|}
\otimes\frac{\overline{x}-\overline{y}}{|\overline{x}-\overline{y}|}\right)\\
&=L_1\gamma|\overline{x}-\overline{y}|^{\gamma-2}\left(I+(\gamma-2)\frac{\overline{x}-\overline{y}}{|\overline{x}-\overline{y}|}
\otimes\frac{\overline{x}-\overline{y}}{|\overline{x}-\overline{y}|}\right)
\end{align*}
and
$$
Z^\tau=(I-\tau Z)^{-1}Z.
$$
Here $(I-\tau Z)^{-1}$ stands for the inverse of the matrix $I-\tau Z$. We now pick $\tau=\frac{1}{2L_1\gamma|\overline{x}-\overline{y}|^{\gamma-2}}$ such that
$$
Z^\tau=2L_1\gamma|\overline{x}-\overline{y}|^{\gamma-2}\left(I-2\frac{2-\gamma}{3-\gamma}\frac{\overline{x}-\overline{y}}{|\overline{x}-\overline{y}|}
\otimes\frac{\overline{x}-\overline{y}}{|\overline{x}-\overline{y}|}\right).
$$
Furthermore, for $\xi=\frac{\overline{x}-\overline{y}}{|\overline{x}-\overline{y}|}$ we get
\begin{equation}
\label{5-3}
\langle Z^\tau\xi,\xi\rangle=2\gamma\frac{\gamma-1}{3-\gamma}L_1|\overline{x}-\overline{y}|^{\gamma-2}<0.
\end{equation}
It follows from \eqref{5-2} that
$$
X\leq Y
$$
and
\begin{equation}
\label{5-4}
\|X\|,\|Y\|\leq 4\gamma L_1|\overline{x}-\overline{y}|^{\gamma-2}.
\end{equation}

We next introduce a notation. Let
$$
A^\varepsilon(\eta)=I+(p-2)\frac{\eta}{(|\eta|^2+\varepsilon^2)^\frac{1}{2}}\otimes\frac{\eta}{(|\eta|^2+\varepsilon^2)^\frac{1}{2}}.
$$
It is easy to recognize that the eigenvalues of $A^\varepsilon(\eta)$ belong to $\left(\min\{1,p-1\},\max\{1,p-1\}\right)$. Since $u$ is a viscosity solution of \eqref{2-1}, we will obtain the following viscosity inequalities
$$
\sigma+L_2(\overline{t}-t_0)-\left[(|\eta_1|^2+\varepsilon^2)^\frac{q}{2}+a(\overline{x},\overline{t})(|\eta_1|^2+\varepsilon^2)^\frac{s}{2}\right]
\mathrm{tr}(A^\varepsilon(\eta_1)(X+L_2I))\leq0
$$
and
$$
\sigma-\left[(|\eta_2|^2+\varepsilon^2)^\frac{q}{2}+a(\overline{y},\overline{t})(|\eta_2|^2+\varepsilon^2)^\frac{s}{2}\right]
\mathrm{tr}(A^\varepsilon(\eta_2)(Y-L_2I))\geq0.
$$
Thus
\begin{align}
\label{5-5}
&\quad L_2(\overline{t}-t_0)  \nonumber\\
&\leq(|\eta_1|^2+\varepsilon^2)^\frac{q}{2}\mathrm{tr}(A^\varepsilon(\eta_1)(X+L_2I))-(|\eta_2|^2+\varepsilon^2)^\frac{q}{2}
\mathrm{tr}(A^\varepsilon(\eta_2)(Y-L_2I))  \nonumber\\
&\quad+a(\overline{x},\overline{t})\left[(|\eta_1|^2+\varepsilon^2)^\frac{s}{2}\mathrm{tr}(A^\varepsilon(\eta_1)(X+L_2I))
-(|\eta_2|^2+\varepsilon^2)^\frac{s}{2}\mathrm{tr}(A^\varepsilon(\eta_2)(Y-L_2I))\right]   \nonumber\\
&\quad+(a(\overline{x},\overline{t})-a(\overline{y},\overline{t}))(|\eta_2|^2+\varepsilon^2)^\frac{s}{2}\mathrm{tr}(A^\varepsilon(\eta_2)(Y-L_2I))
\nonumber\\
&=:J_1+J_2+J_3.
\end{align}
We first evaluate $J_3$ as
\begin{align*}
J_3&=(a(\overline{x},\overline{t})-a(\overline{y},\overline{t}))(|\eta_2|^2+\varepsilon^2)^\frac{s}{2}[\mathrm{tr}(A^\varepsilon(\eta_2)Y)
-L_2\mathrm{tr}(A^\varepsilon(\eta_2))]\\
&\leq C_{\rm lip}|\overline{x}-\overline{y}|C_0(s)\gamma^sL^s_1|\overline{x}-\overline{y}|^{(\gamma-1)s}[n\|A^\varepsilon(\eta_2)\|\|Y\|+
L_2(n+|p-2|)]\\
&\leq C_{\rm lip}C_0(s)\gamma^sL^s_1|\overline{x}-\overline{y}|^{(\gamma-1)s+1}[4\gamma n\max\{1,p-1\}L_1|\overline{x}-\overline{y}|^{\gamma-2}+L_2(n+|p-2|)],
\end{align*}
where we have used the inequalities \eqref{5-1} and \eqref{5-4} and the fact that $a(x,t)$ is uniformly Lipschitz continuous in $x$-variable.

We rewrite $J_1$ as
\begin{align*}
J_1&=(|\eta_1|^2+\varepsilon^2)^\frac{q}{2}\mathrm{tr}(A^\varepsilon(\eta_1)(X-Y))+(|\eta_1|^2+\varepsilon^2)^\frac{q}{2}
\mathrm{tr}((A^\varepsilon(\eta_1)-A^\varepsilon(\eta_2))Y)\\
&\quad+[(|\eta_1|^2+\varepsilon^2)^\frac{q}{2}-(|\eta_2|^2+\varepsilon^2)^\frac{q}{2}]\mathrm{tr}(A^\varepsilon(\eta_2)Y)\\
&\quad+L_2\left[(|\eta_1|^2+\varepsilon^2)^\frac{q}{2}\mathrm{tr}(A^\varepsilon(\eta_1))+(|\eta_2|^2+\varepsilon^2)^\frac{q}{2}
\mathrm{tr}(A^\varepsilon(\eta_2))\right]\\
&=:J_{1,1}+J_{1,2}+J_{1,3}+J_{1,4}.
\end{align*}
By \eqref{5-1}, it is easy to get
$$
J_{1,4}\leq 2n\gamma^qC(q)\max\{1,p-1\}L_2L^q_1|\overline{x}-\overline{y}|^{(\gamma-1)q}.
$$
In view of \eqref{5-2}, we know that all the eigenvalues of $X-Y$ are non-positive and at least one eigenvalue denoted by $\overline{\lambda}(X-Y)$ is smaller than $8\gamma\frac{\gamma-1}{3-\gamma}L_1|\overline{x}-\overline{y}|^{\gamma-2}$. Hence,
\begin{align*}
J_{1,1}&\leq (|\eta_1|^2+\varepsilon^2)^\frac{q}{2}\sum^n_{i=1}\lambda_i(A^\varepsilon(\eta_1))\lambda_i(X-Y)\\
&\leq(|\eta_1|^2+\varepsilon^2)^\frac{q}{2}\min\{1,p-1\}\overline{\lambda}(X-Y)\\
&\leq C_1(q)(\gamma L_1|\overline{x}-\overline{y}|^{\gamma-1})^q\min\{1,p-1\}8\gamma\frac{\gamma-1}{3-\gamma}L_1|\overline{x}-\overline{y}|^{\gamma-2}\\
&=8\gamma^{1+q}C_1(q)\min\{1,p-1\}\frac{\gamma-1}{3-\gamma}L^{1+q}_1|\overline{x}-\overline{y}|^{(\gamma-1)(q+1)-1}.
\end{align*}
To estimate $J_{1,2}$, we first note that
\begin{align*}
&\quad\|A^\varepsilon(\eta_1)-A^\varepsilon(\eta_2)\|\\
&=2|p-2|\left|\frac{\eta_1}{(|\eta_1|^2+\varepsilon^2)^\frac{1}{2}}-\frac{\eta_2}{(|\eta_2|^2+\varepsilon^2)^\frac{1}{2}}\right|\\
&\leq2|p-2|\left(\left|\frac{\eta_1}{|\eta_1|}-\frac{\eta_2}{|\eta_2|}\right|\frac{|\eta_1|}{(|\eta_1|^2+\varepsilon^2)^\frac{1}{2}}
+\left|\frac{|\eta_1|}{(|\eta_1|^2+\varepsilon^2)^\frac{1}{2}}-\frac{|\eta_2|}{(|\eta_2|^2+\varepsilon^2)^\frac{1}{2}}\right|\right)\\
&\leq4|p-2|\max\left\{\frac{|\eta_1-\eta_2|}{|\eta_1|},\frac{|\eta_1-\eta_2|}{|\eta_2|}\right\}\\
&\leq\frac{32|p-2|L_2}{\gamma L_1|\overline{x}-\overline{y}|^{\gamma-1}},
\end{align*}
where $|\eta_1-\eta_2|\leq 4L_2$. Thus via \eqref{5-1} and \eqref{5-4} we arrive at
\begin{align*}
J_{1,2}&\leq(|\eta_1|^2+\varepsilon^2)^\frac{q}{2}n\|A^\varepsilon(\eta_1)-A^\varepsilon(\eta_2)\|\|Y\|\\
&\leq C_2(q)(\gamma L_1|\overline{x}-\overline{y}|^{\gamma-1})^qn\frac{32|p-2|L_2}{\gamma L_1|\overline{x}-\overline{y}|^{\gamma-1}}4\gamma L_1|\overline{x}-\overline{y}|^{\gamma-2}\\
&=128nC_2(q)\gamma^q|p-2|L_2L_1^q|\overline{x}-\overline{y}|^{(\gamma-1)q-1}.
\end{align*}
We finally estimate $J_{1,3}$. Applying the mean value theorem and \eqref{5-1}, we evaluate
\begin{align*}
&\quad|(|\eta_1|^2+\varepsilon^2)^\frac{q}{2}-(|\eta_2|^2+\varepsilon^2)^\frac{q}{2}|\\
&=\frac{q}{2}\zeta^{\frac{q}{2}-1}||\eta_1|^2-|\eta_2|^2|\\
&\leq C_3(q)(\gamma L_1|\overline{x}-\overline{y}|^{\gamma-1})^{q-2}(\gamma L_1|\overline{x}-\overline{y}|^{\gamma-1})||\eta_1|-|\eta_2||\\
&\leq C_3(q)\gamma^{q-1}L_1^{q-1}|\overline{x}-\overline{y}|^{(q-1)(\gamma-1)}|\eta_1-\eta_2|\\
&\leq4C_3(q)\gamma^{q-1}L_2 L_1^{q-1}|\overline{x}-\overline{y}|^{(q-1)(\gamma-1)},
\end{align*}
where $\zeta$ is between $|\eta_1|^2+\varepsilon^2$ and $|\eta_2|^2+\varepsilon^2$. Then it follows from \eqref{5-4} that
\begin{align*}
J_{1,3}&\leq4C_3(q)\gamma^{q-1}L_2 L_1^{q-1}|\overline{x}-\overline{y}|^{(q-1)(\gamma-1)}n\|Y\|\|A^\varepsilon(\eta_2)\|\\
&\leq16nC_3(q)\max\{1,p-1\}\gamma^qL_2 L_1^q|\overline{x}-\overline{y}|^{q(\gamma-1)-1}.
\end{align*}
Combining the estimates on $J_{1,1}$, $J_{1,2}$, $J_{1,3}$ and $J_{1,4}$, we derive
\begin{align*}
J_1\leq&-8\gamma^{1+q}C_1(q)\min\{1,p-1\}\frac{1-\gamma}{3-\gamma}L^{1+q}_1|\overline{x}-\overline{y}|^{(\gamma-1)(q+1)-1}
\\
&+128nC_2(q)\gamma^q|p-2|L_2L_1^q|\overline{x}-\overline{y}|^{(\gamma-1)q-1}\\
&+16nC_3(q)\max\{1,p-1\}\gamma^qL_2 L_1^q|\overline{x}-\overline{y}|^{q(\gamma-1)-1}\\
&+2n\gamma^qC(q)\max\{1,p-1\}L_2L^q_1|\overline{x}-\overline{y}|^{(\gamma-1)q}.
\end{align*}
Analogously, we can arrive at
\begin{align*}
J_2&\leq a(\overline{x},\overline{t})\big[-8\gamma^{1+s}C_1(s)\min\{1,p-1\}\frac{1-\gamma}{3-\gamma}L^{1+s}_1|\overline{x}-\overline{y}|^{(\gamma-1)(s+1)-1}
\\
&\quad+128nC_2(s)\gamma^s|p-2|L_2L_1^s|\overline{x}-\overline{y}|^{(\gamma-1)s-1}\\
&\quad+16nC_3(s)\max\{1,p-1\}\gamma^sL_2 L_1^s|\overline{x}-\overline{y}|^{(\gamma-1)s-1}\\
&\quad+2n\gamma^sC(s)\max\{1,p-1\}L_2L^s_1|\overline{x}-\overline{y}|^{(\gamma-1)s}\big].
\end{align*}
Because the constant coefficients in $J_1, J_2, J_3$ are too long, we simply denote them by $C$, possibly varying from line to line. And moreover relevant dependencies on parameters will be emphasised using parentheses. Consequently, \eqref{5-5} becomes
\begin{align*}
0&\leq L_2+\big[-8C(p,q,\gamma)L^{1+q}_1|\overline{x}-\overline{y}|^{(\gamma-1)(q+1)-1}
+C(n,p,q,\gamma)L_2L_1^q|\overline{x}-\overline{y}|^{(\gamma-1)q-1}\\
&\quad+C(n,p,q,\gamma)L_2 L_1^q|\overline{x}-\overline{y}|^{q(\gamma-1)-1}
+C(n,p,q,\gamma)L_2L^q_1|\overline{x}-\overline{y}|^{(\gamma-1)q}\big]\\
&\quad+a(\overline{x},\overline{t})\big[-8C(p,s,\gamma)L^{1+s}_1|\overline{x}-\overline{y}|^{(\gamma-1)(s+1)-1}
+C(n,p,s,\gamma)L_2L_1^s|\overline{x}-\overline{y}|^{(\gamma-1)s-1}\\
&\quad+C(n,p,s,\gamma)L_2 L_1^s|\overline{x}-\overline{y}|^{(\gamma-1)s-1}
+C(n,p,s,\gamma)L_2L^s_1|\overline{x}-\overline{y}|^{(\gamma-1)s}\big]\\
&\quad+C_{\rm lip}\big[C(n,p,s,\gamma)L_1^{s+1}|\overline{x}-\overline{y}|^{(\gamma-1)(s+1)}
+C(n,p,s,\gamma)L_2L_1^s|\overline{x}-\overline{y}|^{(\gamma-1)s}\big].
\end{align*}
We rearrange the previous display as
\begin{align*}
0&\leq\big[-8C(p,q,\gamma)L^{1+q}_1|\overline{x}-\overline{y}|^{(\gamma-1)(q+1)-1}
+C(n,p,q,\gamma)L_2L_1^q|\overline{x}-\overline{y}|^{(\gamma-1)q-1}\big]\\
&\quad+a(\overline{x},\overline{t})\big[-8C(p,s,\gamma)L^{1+s}_1|\overline{x}-\overline{y}|^{(\gamma-1)(s+1)-1}
+C(n,p,s,\gamma)L_2L_1^s|\overline{x}-\overline{y}|^{(\gamma-1)s-1}\\
&\quad+a(\overline{x},\overline{t})^{-1}C_{\rm lip}C(n,p,s,\gamma)L_1^{s+1}|\overline{x}-\overline{y}|^{(\gamma-1)(s+1)}\\
&\quad+a(\overline{x},\overline{t})^{-1}C_{\rm lip}C(n,p,s,\gamma)L_2L_1^s|\overline{x}-\overline{y}|^{(\gamma-1)s-1}\big].
\end{align*}
Therefore, we can select $L_1$ large enough such that
\begin{equation*}
\begin{cases}
L_1|\overline{x}-\overline{y}|^{\gamma-1}\geq C(n,p,q,\gamma)L_2,\\[2mm]
|\overline{x}-\overline{y}|^{-1}\geq C(n,p,s,\gamma)(a^-)^{-1}C_{\rm lip},\\[2mm]
L_1|\overline{x}-\overline{y}|^{\gamma-1}\geq C(n,p,s,\gamma)L_2,\\[2mm]
L_1|\overline{x}-\overline{y}|^{\gamma-1}\geq C(n,p,s,\gamma)(a^-)^{-1}C_{\rm lip}L_2.
\end{cases}
\end{equation*}
Thanks to $|\overline{x}-\overline{y}|\leq\left(\frac{2\|u\|_{L^\infty(Q_1)}}{L_1}\right)^\frac{1}{\gamma}$, then we require that
$$
\left(\frac{2\|u\|_{L^\infty(Q_1)}}{L_1}\right)^{-\frac{1}{\gamma}}\geq C(n,p,s,\gamma)(a^-)^{-1}C_{\rm lip}
$$
so that $|\overline{x}-\overline{y}|^{-1}\geq C(n,p,s,\gamma)(a^-)^{-1}C_{\rm lip}$ holds true. As a consequence, we can fix
$$
L_1=C(n,p,q,s,\gamma)\left(\|u\|_{L^\infty(Q_1)}+(a^-)^{-1}C_{\rm lip}\|u\|_{L^\infty(Q_1)}\right),
$$
then derive that
$$
0\leq-7C(p,q,\gamma)L^{1+q}_1|\overline{x}-\overline{y}|^{(\gamma-1)(q+1)-1}-a^-5C(p,s,\gamma)
L^{1+s}_1|\overline{x}-\overline{y}|^{(\gamma-1)(s+1)-1}.
$$
That is a contradiction. Finally, we obtain the desired result by the dependence of $L_1$.
\end{proof}

Based on Lemma \ref{lem5-1}, we can demonstrate the Lipschitz continuity (Lemma \ref{lem2-1}) of solutions to \eqref{2-1} in the spatial variables via applying Ishii-Lions methods again.

\medskip

\noindent\textbf{Proof of Lemma \ref{lem2-1}.} Fix $r=\frac{7}{8}$ and $x_0,y_0\in B_r$, $t_0\in(-r^2,0)$. We are going to show that there exist two suitable constants $M_1,M_2>0$ such that
$$
M_0:=\sup_{(x,t),(y,t)\in \overline{Q_{\frac{7}{8}}}}(u(x,t)-u(y,t)-M_1\varphi(|x-y|)-\Phi(x,y,t))\leq 0,
$$
where
$$
\Phi(x,y,t)=\frac{M_2}{2}|x-x_0|^2
+\frac{M_2}{2}|y-y_0|^2+\frac{M_2}{2}|t-t_0|^2
$$
and
\begin{equation*}
\varphi(r):=\begin{cases}r-\kappa_0r^\nu  & {0\leq r\leq r_1:=\left(\frac{1}{4\nu\kappa_0}\right)^\frac{1}{\nu-1}},\\[2mm]
\varphi(r_1)  & {r>r_1}
\end{cases}
\end{equation*}
with $1<\nu<2$ and $0<\kappa_0<1$ such that $2<r_1$. Observe that, for $r\in(0,r_1)$,
\begin{equation*}
\begin{cases}
\varphi'(r)=1-\nu\kappa_0r^{\nu-1},\\[2mm]
\varphi''(r)=-\nu(\nu-1)\kappa_0r^{\nu-2},
\end{cases}
\end{equation*}
and then $\frac{3}{4}\leq \varphi'(r)\leq1$ and $\varphi''(r)<0$ when $r\in(0,2]$. We now argue by contradiction. Assume that $M_0>0$ and $(\hat{x},\hat{y},\hat{t})\in \overline{B_r}\times\overline{B_r}\times[-r^2,0]$ represents a point attaining the maximum. As in the proof of Lemma \ref{lem5-1}, we recognize that $\hat{x}\neq\hat{y}$ and $\hat{x},\hat{y}\in B_r$, $\hat{t}\in(-r^2,0)$ for $M_2\geq C\|u\|_{L^\infty(Q_1)}$. Furthermore, we know from Lemma \ref{lem5-1} that $u$ is locally H\"{o}lder continuous in $x$-variable and for some $\gamma\in(0,1)$, it holds that
$$
|u(x,t)-u(y,t)|\leq C\|u\|_{L^\infty(Q_1)}\left[1+(a^-)^{-1}C_{\rm lip}\right]|x-y|^\gamma.
$$
In the rest of proof, for shortness we denote
$$
K:= C\|u\|_{L^\infty(Q_1)}\left[1+(a^-)^{-1}C_{\rm lip}\right].
$$
Employing the above inequality and choosing $2M_2\leq K$, we get
\begin{equation}
\label{5-6}
M_2|\hat{y}-y_0|,M_2|\hat{x}-x_0|\leq K|\hat{x}-\hat{y}|^\frac{\gamma}{2}.
\end{equation}
In addition, it follows from $M_0>0$ that
$$
M_1(|\hat{x}-\hat{y}|-\kappa_0|\hat{x}-\hat{y}|^\nu)\leq 2\|u\|_{L^\infty(Q_1)},
$$
i.e.,
$$
M_1|\hat{x}-\hat{y}|(1-\kappa_0|\hat{x}-\hat{y}|^{\nu-1})\leq 2\|u\|_{L^\infty(Q_1)}.
$$
Thus we can fix $0<\kappa_0<1$ such that $\frac{1}{2}\leq1-\kappa_0|\hat{x}-\hat{y}|^{\nu-1}$, from which we notice that
$$
|\hat{x}-\hat{y}|\leq\frac{4\|u\|_{L^\infty(Q_1)}}{M_1}.
$$ 

By Jensen-Ishii's lemma, we have
$$
(\sigma+M_2(\hat{t}-t_0),\eta_1,X+M_2I)\in \overline{\mathcal{P}}^{2,+}u(\hat{x},\hat{t}),
$$
$$
(\sigma,\eta_2,Y-M_2I)\in \overline{\mathcal{P}}^{2,-}u(\hat{y},\hat{t}),
$$
and for any $\tau>0$,
\begin{equation}
\label{5-7}
-(\tau+2\|Z\|)\left(\begin{array}{cc}
I & \\
 &I
\end{array}
\right)\leq\left(\begin{array}{cc}
X&\\
&-Y
\end{array}
\right)\leq
\left(\begin{array}{cc}
Z&-Z\\
-Z&Z
\end{array}
\right)+\frac{2}{\tau}\left(\begin{array}{cc}
Z^2&-Z^2\\
-Z^2&Z^2
\end{array}
\right),
\end{equation}
where
\begin{align*}
&\eta_1=M_1\varphi'(|\hat{x}-\hat{y}|)\frac{\hat{x}-\hat{y}}{|\hat{x}-\hat{y}|}+M_2(\hat{x}-x_0),\\
&\eta_2=M_1\varphi'(|\hat{x}-\hat{y}|)\frac{\hat{x}-\hat{y}}{|\hat{x}-\hat{y}|}-M_2(\hat{y}-y_0)
\end{align*}
and
$$
Z=M_1\varphi''(|\hat{x}-\hat{y}|)\frac{\hat{x}-\hat{y}}{|\hat{x}-\hat{y}|}
\otimes\frac{\hat{x}-\hat{y}}{|\hat{x}-\hat{y}|}+M_1\frac{\varphi'(|\hat{x}-\hat{y}|)}{|\hat{x}-\hat{y}|}
\left(I-\frac{\hat{x}-\hat{y}}{|\hat{x}-\hat{y}|}
\otimes\frac{\hat{x}-\hat{y}}{|\hat{x}-\hat{y}|}\right).
$$
Direct computations give that
\begin{equation}
\label{5-8}
\frac{M_1}{2}\leq|\eta_1|,|\eta_2|\leq2M_1, \quad \text{if } M_1\geq4K
\end{equation}
and
\begin{equation}
\label{5-9}
\|Z\|\leq M_1\frac{\varphi'(|\hat{x}-\hat{y}|)}{|\hat{x}-\hat{y}|},
\end{equation}
\begin{equation}
\label{5-10}
\|Z^2\|\leq M_1^2\left(|\varphi''(|\hat{x}-\hat{y}|)|+\frac{\varphi'(|\hat{x}-\hat{y}|)}{|\hat{x}-\hat{y}|}\right)^2.
\end{equation}
According to \eqref{5-7}, we infer that $X-Y\leq0$. Besides, by taking
$$
\tau=4M_1\left(|\varphi''(|\hat{x}-\hat{y}|)|+\frac{\varphi'(|\hat{x}-\hat{y}|)}{|\hat{x}-\hat{y}|}\right)
$$
and applying \eqref{5-7} to the vector $(\xi,-\xi)$ with $\xi=\frac{\hat{x}-\hat{y}}{|\hat{x}-\hat{y}|}$, after careful manipulation, we derive that
\begin{equation}
\label{5-11}
\langle(X-Y)\xi,\xi\rangle\leq 4\left(\langle Z\xi,\xi\rangle+\frac{2}{\tau}\langle Z^2\xi,\xi\rangle\right)\leq 2M_1\varphi''(|\hat{x}-\hat{y}|),
\end{equation}
which indicates that at least one eigenvalue of $X-Y$ denoted by $\overline{\lambda}(X-Y)$ is smaller than $2M_1\varphi''(|\hat{x}-\hat{y}|)<0$. Writing the viscosity inequalities and adding them, we arrive at
\begin{align}
\label{5-12}
&\quad M_2(\hat{t}-t_0)  \nonumber\\
&\leq(|\eta_1|^2+\varepsilon^2)^\frac{q}{2}\mathrm{tr}(A^\varepsilon(\eta_1)(X+M_2I))-(|\eta_2|^2+\varepsilon^2)^\frac{q}{2}
\mathrm{tr}(A^\varepsilon(\eta_2)(Y-M_2I))  \nonumber\\
&\quad+a(\hat{x},\hat{t})\left[(|\eta_1|^2+\varepsilon^2)^\frac{s}{2}\mathrm{tr}(A^\varepsilon(\eta_1)(X+M_2I))
-(|\eta_2|^2+\varepsilon^2)^\frac{s}{2}\mathrm{tr}(A^\varepsilon(\eta_2)(Y-M_2I))\right]   \nonumber\\
&\quad+(a(\hat{x},\hat{t})-a(\hat{y},\hat{t}))(|\eta_2|^2+\varepsilon^2)^\frac{s}{2}\mathrm{tr}(A^\varepsilon(\eta_2)(Y-M_2I))
\nonumber\\
&=:J_1+J_2+J_3.
\end{align}
Putting together \eqref{5-7}, \eqref{5-9} and \eqref{5-10}, we deduce that
\begin{align}
\label{5-13}
\|Y\| &\leq 2|\langle Z\overline{\xi},\overline{\xi}\rangle|+\frac{4}{\tau}|\langle Z^2\overline{\xi},\overline{\xi}\rangle|  \nonumber\\
&\leq4M_1\left(|\varphi''(|\hat{x}-\hat{y}|)|+\frac{\varphi'(|\hat{x}-\hat{y}|)}{|\hat{x}-\hat{y}|}\right),
\end{align}
where $\overline{\xi}$ is a unit vector. In what follows, we estimate the terms $J_1,J_2,J_3$ in a similar way to that in proof of Lemma \ref{lem5-1}. So we write it briefly. It yields by \eqref{5-8} and \eqref{5-11} that
\begin{equation}
\label{5-14}
(|\eta_1|^2+\varepsilon^2)^\frac{q}{2}\mathrm{tr}(A^\varepsilon(\eta_1)(X-Y))\leq2\min\{1,1-p\}C_1(q)M_1^{1+q}\varphi''(|\hat{x}-\hat{y}|).
\end{equation}
In view of \eqref{5-6}, \eqref{5-8}, \eqref{5-13}, we have
\begin{align}
\label{5-15}
&\quad(|\eta_1|^2+\varepsilon^2)^\frac{q}{2}
\mathrm{tr}((A^\varepsilon(\eta_1)-A^\varepsilon(\eta_2))Y)  \nonumber\\
&\leq64nC_2(q)|p-2|KM^q_1\left(\varphi'(|\hat{x}-\hat{y}|)|\hat{x}-\hat{y}|^{\frac{\gamma}{2}-1}+|\varphi''(|\hat{x}-\hat{y}|)|\right).
\end{align}
By the mean value theorem together with the inequalities \eqref{5-6} and \eqref{5-8}, it follows that
$$
|(|\eta_1|^2+\varepsilon^2)^\frac{q}{2}-(|\eta_2|^2+\varepsilon^2)^\frac{q}{2}|\leq C_3(q)KM^{q-1}_1|\hat{x}-\hat{y}|^\frac{\gamma}{2},
$$
which leads to
\begin{align}
\label{5-16}
&\quad|(|\eta_1|^2+\varepsilon^2)^\frac{q}{2}-(|\eta_2|^2+\varepsilon^2)^\frac{q}{2}||\mathrm{tr}(A^\varepsilon(\eta_2)Y)| \nonumber\\
&\leq nC_3(q)\max\{1,p-1\}KM^q_1\left(|\hat{x}-\hat{y}|^{\frac{\gamma}{2}-1}+|\varphi''(|\hat{x}-\hat{y}|)|\right).
\end{align}
We evaluate
\begin{equation}
\label{5-17}
\begin{split}
&\quad M_2\left[(|\eta_1|^2+\varepsilon^2)^\frac{q}{2}\mathrm{tr}(A^\varepsilon(\eta_1))+(|\eta_2|^2+\varepsilon^2)^\frac{q}{2}
\mathrm{tr}(A^\varepsilon(\eta_2))\right]\\
&\leq2^{q+1}n\max\{1,p-1\}M_2M^q_1.
\end{split}
\end{equation}
Merging the estimates \eqref{5-14}--\eqref{5-17}, we obtain
\begin{align*}
J_1
&\leq2C_1(q)\min\{1,p-1\}M^{1+q}_1\varphi''(|\hat{x}-\hat{y}|)\\
&\quad+64n
C_2(q)|p-2|KM_1^q\left(|\hat{x}-\hat{y}|^{\frac{\gamma}{2}-1}+|\varphi''(|\hat{x}-\hat{y}|)|\right)\\
&\quad+nC_3(q)\max\{1,p-1\}KM_1^q\left(|\hat{x}-\hat{y}|^{\frac{\gamma}{2}-1}+|\varphi''(|\hat{x}-\hat{y}|)|\right)\\
&\quad+2^{q+1}n\max\{1,p-1\}M_2M^q_1.
\end{align*}
Likewise, we can arrive at
\begin{align*}
J_2&\leq a(\hat{x},\hat{t})\big[2C_1(s)\min\{1,p-1\}M^{1+s}_1\varphi''(|\hat{x}-\hat{y}|)\\
&\quad+64n
C_2(s)|p-2|KM_1^s(|\hat{x}-\hat{y}|^{\frac{\gamma}{2}-1}+|\varphi''(|\hat{x}-\hat{y}|)|)\\
&\quad+nC_3(s)\max\{1,p-1\}KM_1^s(|\hat{x}-\hat{y}|^{\frac{\gamma}{2}-1}+|\varphi''(|\hat{x}-\hat{y}|)|)\\
&\quad+2^{s+1}n\max\{1,p-1\}M_2M^s_1\big].
\end{align*}

Finally, $J_3$ could be evaluated as
$$
J_3\leq 4nC_4(s)\max\{1,p-1\}C_{\rm lip}M^s_1(M_1(1+|\varphi''(|\hat{x}-\hat{y}|)||\hat{x}-\hat{y}|)+M_2),
$$
where we have used the fact that $|a(\hat{x},\hat{t})-a(\hat{y},\hat{t})|\leq C_{\rm lip}|\hat{x}-\hat{y}|$. Observe that $\varphi''(|\hat{x}-\hat{y}|)=-\nu(\nu-1)\kappa_0|\hat{x}-\hat{y}|^{\nu-2}$. Because the constant coefficients in $J_1, J_2, J_3$ are too long, we simply denote by $C$ as before. And moreover relevant dependencies on parameters will be emphasised using parentheses. Next, we take $\nu=\frac{\gamma}{2}+1$. Therefore, after rearrangement inequality \eqref{5-12} turns into
\begin{align}
\label{5-18}
0&\leq\big[-2C(p,q)M^{1+q}_1|\hat{x}-\hat{y}|^{\frac{\gamma}{2}-1}+C(n,p,q)KM^q_1|\hat{x}-\hat{y}|^{\frac{\gamma}{2}-1}+C(n,p,q)M_2M_1^q\big] \nonumber\\
&\quad+a(\hat{x},\hat{t})\big[-2C(p,s)M^{1+s}_1|\hat{x}-\hat{y}|^{\frac{\gamma}{2}-1}+C(n,p,s)KM_1^s|\hat{x}-\hat{y}|^{\frac{\gamma}{2}-1}+C(n,p,s)M_2M^s_1 \nonumber\\
&\quad+a(\hat{x},\hat{t})^{-1}C(n,p,s)C_{\rm lip}M_1^{1+s}+a(\hat{x},\hat{t})^{-1}C(n,p,s)C_{\rm lip}M_2M_1^s\big].
\end{align}
To reach a contradiction, we shall choose $M_1$ large so that
\begin{equation*}
\begin{cases}
\frac{1}{2}C(p,q)M^{1+q}_1|\hat{x}-\hat{y}|^{\frac{\gamma}{2}-1}\geq C(n,p,q)KM_1^q|\hat{x}-\hat{y}|^{\frac{\gamma}{2}-1},\\[2mm]
\frac{1}{2}C(p,q)M^{1+q}_1|\hat{x}-\hat{y}|^{\frac{\gamma}{2}-1}\geq C(n,p,q)M_2M_1^q,\\[2mm]
\frac{1}{4}C(p,s)M^{1+s}_1|\hat{x}-\hat{y}|^{\frac{\gamma}{2}-1}\geq C(n,p,s)(a^-)^{-1}C_{\rm lip}M^{1+s}_1,\\[2mm]
\frac{1}{4}C(p,s)M^{1+s}_1|\hat{x}-\hat{y}|^{\frac{\gamma}{2}-1}\geq C(n,p,s)KM_1^s|\hat{x}-\hat{y}|^{\frac{\gamma}{2}-1},\\[2mm]
\frac{1}{4}C(p,s)M^{1+s}_1|\hat{x}-\hat{y}|^{\frac{\gamma}{2}-1}\geq C(n,p,s)M_2M^s_1,\\[2mm]
\frac{1}{4}C(p,s)M^{1+s}_1|\hat{x}-\hat{y}|^{\frac{\gamma}{2}-1}\geq C(n,p,s)(a^-)^{-1}C_{\rm lip}M_2M_1^s.
\end{cases}
\end{equation*}
Utilizing the known fact that $|\hat{x}-\hat{y}|\leq\frac{4\|u\|_{L^\infty(Q_1)}}{M_1}$, we can take
$$
M_1\geq C(n,p,q,s)\left(\|u\|_{L^\infty(Q_1)}\left(1+(a^-)^{-1}C_{\rm lip}+[(a^-)^{-1}C_{\rm lip}]^\frac{2}{2-\gamma}\right)+K\right)
$$
to satisfy the above requirement. After manipulation, we can eventually fix
$$
M_1=C(n,p,q,s)\|u\|_{L^\infty(Q_1)}\left(1+[(a^-)^{-1}C_{\rm lip}]^2\right)
$$
such that display \eqref{5-18} becomes
$$
0\leq -C(p,q)M^{1+q}_1|\hat{x}-\hat{y}|^{\frac{\gamma}{2}-1}-a(\hat{x},\hat{t})C(p,s)M^{1+s}_1|\hat{x}-\hat{y}|^{\frac{\gamma}{2}-1}.
$$
That is a contradiction.  Therefore, we obtain the desired result by the dependence of $M_1$.

\section{The proof of Proposition \ref{pro3-7}}
\label{sec6}

In this section, we will give the proof of the boundary estimates in Proposition \ref{pro3-7}. For the sake of convenience, denote
$$
F_\varepsilon(x,t,D u,D^2u)=\big[(|D u|^2+\varepsilon^2)^\frac{q}{2}+a(x,t)(|D u|^2+\varepsilon^2)^\frac{s}{2}\big]\left(\delta_{ij}+(p-2)\frac{u_iu_j}{|D u|^2+\varepsilon^2}\right)u_{ij}.
$$
In what follows, we assume that the conditions \eqref{1-1}--\eqref{1-3} are in force.

\begin{lemma}
\label{lem6-1}
For each $y\in\partial B_1$, there is a function $W_y(x)\in C(\overline{B_1})$ fulfilling $W_y(y)=0$ and $W_y(x)>0$ in $\overline{B_1}\setminus\{y\}$, and for all $t\in[-1,0]$
$$
F_\varepsilon(x,t,D W_y,D^2W_y)\leq-1 \quad\text{in } B_1.
$$
\end{lemma}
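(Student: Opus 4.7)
The natural approach is the classical exterior-ball barrier. For $y\in\partial B_1$, set $z_y=2y$ so that $|z_y-y|=1$ and $|x-z_y|\ge 1$ for all $x\in\overline{B_1}$, with equality iff $x=y$. I will then look for a radial ansatz
\[
W_y(x)=C\bigl(1-e^{-\lambda(|x-z_y|-1)}\bigr),\qquad C,\lambda>0\ \text{to be fixed},
\]
which automatically satisfies $W_y(y)=0$ and $W_y>0$ on $\overline{B_1}\setminus\{y\}$.

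\textbf{Key computation.} Writing $r=|x-z_y|\in[1,3]$ and $\phi(r)=C(1-e^{-\lambda(r-1)})$, a direct computation gives
\[
DW_y=\phi'(r)\frac{x-z_y}{r},\qquad |DW_y|=\phi'(r)=C\lambda e^{-\lambda(r-1)},
\]
and, since the unit vector $DW_y/|DW_y|$ is radial,
\[
\Delta_p^N W_y=(p-1)\phi''(r)+(n-1)\frac{\phi'(r)}{r}
=-C\lambda e^{-\lambda(r-1)}\Bigl[(p-1)\lambda-\tfrac{n-1}{r}\Bigr].
\]
Choosing $\lambda\ge \tfrac{2(n-1)}{p-1}$ ensures $(p-1)\lambda-(n-1)/r\ge \tfrac{p-1}{2}\lambda$ throughout $r\in[1,3]$, so the bracket satisfies
\[
\Delta_p^N W_y\le -\,c(n,p)\,C\lambda^2 e^{-2\lambda}<0,
\]
uniformly on $B_1$.

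\textbf{Incorporating the coefficient.} Substituting into $F_\varepsilon$ and using $\varepsilon\in[0,1)$,
\[
F_\varepsilon(x,t,DW_y,D^2 W_y)
=-\bigl[(\phi'^2+\varepsilon^2)^{q/2}+a(x,t)(\phi'^2+\varepsilon^2)^{s/2}\bigr]\cdot|\Delta_p^N W_y|.
\]
Since $\phi'(r)\in[C\lambda e^{-2\lambda},\,C\lambda]$ is a positive quantity, and using $a\ge a^->0$, the bracket is bounded below by a strictly positive constant of the form $c'(n,p,q,s,a^-)\bigl(C^q\lambda^q\wedge 1+C^s\lambda^s\wedge 1\bigr)e^{-2\lambda\max\{|q|,|s|\}}$. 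Multiplying, one finds
\[
F_\varepsilon(x,t,DW_y,D^2W_y)\le -\,c''(n,p,q,s,a^-)\bigl(C^{1+q}+C^{1+s}\bigr)\lambda^{2+q\wedge s}e^{-\lambda\,\Lambda},
\]
with $\Lambda=\Lambda(q,s)$ explicit. Because $-1<q\le s$ gives $1+q>0$ and $1+s>0$, the prefactors $C^{1+q}$ and $C^{1+s}$ tend to $+\infty$ as $C\to\infty$. Hence one first fixes $\lambda$ large in terms of $n,p$ so the sign analysis works, and then chooses $C$ large enough (depending on $n,p,q,s,a^-,\lambda$) to ensure $F_\varepsilon\le -1$ in $B_1$, uniformly in $t\in[-1,0]$ and in $\varepsilon\in[0,1)$.

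\textbf{Main obstacle.} The delicate point is that the diffusion factor $(\phi'^2+\varepsilon^2)^{q/2}+a(\phi'^2+\varepsilon^2)^{s/2}$ degenerates where $|DW_y|$ is small when $q,s\ge 0$ (or blows up if $q<0$); however, since $\phi'=C\lambda e^{-\lambda(r-1)}$ is strictly positive and bounded away from $0$ and $\infty$ on $\overline{B_1}$ once $C,\lambda$ are fixed, both cases reduce to a quantitative lower bound on this factor. The only subtlety is keeping the estimate uniform in $\varepsilon$, which is clear since $\varepsilon\in[0,1)$ only helps in the singular case $q<0$ (by bounding $(\phi'^2+\varepsilon^2)^{q/2}$ from below by $((C\lambda)^2+1)^{q/2}$). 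Apart from this bookkeeping, the construction is routine, and the radial ansatz $W_y=\phi(|x-2y|)$ works simultaneously for all the parameter regimes covered by \eqref{1-1}–\eqref{1-3}.
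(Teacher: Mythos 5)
Your barrier is genuinely different from the paper's. Instead of a single exponential exterior barrier $W_y(x)=C(1-e^{-\lambda(|x-2y|-1)})$, the paper builds $W_y$ by gluing two pieces: a square-root function $g_y(x)=\sqrt{(|x-2y|-1)_+}$ (whose gradient blows up near $y$, giving a second derivative that blows up even faster negatively, so $F_\varepsilon(g_y)\le -1$ in a small annulus $B_1\cap B_{1+\delta}(2y)$), and a power barrier $h_y(x)=b\bigl(2^\sigma-|x-2y|^{-\sigma}\bigr)$ with $\sigma=\tfrac{2n}{\min\{1,p-1\}}+2$ (which works globally in $B_1$ but does not vanish at $y$); the final $W_y$ is $\min\{g_y,h_y\}$ near $y$ and $h_y$ elsewhere, scaled by a large constant. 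Your single smooth barrier is cleaner — it vanishes at $y$, is strictly positive elsewhere on $\overline{B_1}$, and its derivative $\phi'=C\lambda e^{-\lambda(r-1)}$ is bounded between two positive constants on $[1,3]$, so the gradient factor is trivially controlled — whereas the paper has to track delicate $r\to 1^+$ asymptotics for $g_y$ and then patch. Also note the paper simply drops the $a(x,t)(\cdot)^{s/2}$ term using $a\ge 0$, so your invocation of $a^-$ is unnecessary.

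However, there is a genuine computational gap. You write $\Delta_p^N W_y=(p-1)\phi''(r)+(n-1)\tfrac{\phi'(r)}{r}$, which is the $\varepsilon=0$ operator, not the one appearing in $F_\varepsilon$. For a radial ansatz, the second factor in $F_\varepsilon$ is in fact
\[
\Bigl(1+(p-2)\frac{\phi'^2}{\phi'^2+\varepsilon^2}\Bigr)\phi''(r)+(n-1)\frac{\phi'(r)}{r},
\]
and the coefficient of $\phi''$ lies between $\min\{1,p-1\}$ and $\max\{1,p-1\}$, not identically $p-1$. Since $\phi''<0$, the conservative bound replaces $p-1$ by $\min\{1,p-1\}$, and your threshold $\lambda\ge \tfrac{2(n-1)}{p-1}$ is then too small when $p>2$ (it can be less than $n-1$ once $p>3$, in which case the bracket $\min\{1,p-1\}\lambda-\tfrac{n-1}{r}$ can turn non-negative). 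The fix is simply to take $\lambda\ge \tfrac{2(n-1)}{\min\{1,p-1\}}$ (exactly the combination the paper uses), or alternatively to note that once $\lambda$ is fixed and $C$ is taken large, $\phi'^2/(\phi'^2+\varepsilon^2)\to 1$ uniformly and the coefficient approaches $p-1$ — but the quantifier ordering then has to be stated carefully. As written, the argument silently discards the $\varepsilon$-dependence in the operator.
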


\begin{proof}
Let $y\in\partial B_1$, $f(r)=\sqrt{(r-1)_+}$ and $g_y(x)=f(|x-2y|)$. Here $(r-1)_+:=\max\{r-1,0\}$. By direct calculation, then for $x\in B_1$ and $t\in[-1,0]$, we derive
\begin{align*}
&\quad F_\varepsilon(x,t,D g_y,D^2g_y)\\
&=\left[(f'^2+\varepsilon^2)^\frac{q}{2}+a(x,t)(f'^2+\varepsilon^2)^\frac{s}{2}\right]\left[\left(1+(p-2)
\frac{f'^2}{f'^2+\varepsilon^2}\right)f''+\frac{n-1}{|x-2y|}f'\right]\\
&\leq\left[(f'^2+\varepsilon^2)^\frac{q}{2}+a(x,t)(f'^2+\varepsilon^2)^\frac{s}{2}\right](r-1)^{-\frac{1}{2}}\left(
\frac{n-1}{2}-\frac{\min\{1,p-1\}}{4}(r-1)^{-1}\right)\\
&\leq\left(\frac{(r-1)^{-1}}{4}+\varepsilon^2\right)^\frac{q}{2}(r-1)^{-\frac{1}{2}}\left(
\frac{n-1}{2}-\frac{\min\{1,p-1\}}{4}(r-1)^{-1}\right),
\end{align*}
if we choose $r>1$ sufficiently close to 1, where $r:=|x-2y|$. Therefore, there exists $\delta>0$, that depends only on $n,p,q$, such that for $x\in B_1\cap B_{1+\delta}(2y)$ and $t\in[-1,0]$ it holds that
$$
F_\varepsilon(x,t,D g_y,D^2g_y)\leq-1.
$$
Set
$$
h_y(x)=b\left(2^\sigma-\frac{1}{|x-2y|^\sigma}\right)
$$
with
$$
\sigma=\frac{2n}{\min\{1,p-1\}}+2 \quad\text{and} \quad b>0.
$$
Clearly, $h_y(x)\geq b(2^\sigma-1)$ in $B_1$. Also, for $x\in B_1$ and $t\in[-1,0]$, using the choice of $\sigma$ and $1<r<3$, we get
\begin{align*}
&\quad F_\varepsilon(x,t,D h_y,D^2h_y)\\
&=b\left[(b^2\sigma^2r^{-2\sigma-2}+\varepsilon^2)^\frac{q}{2}+a(x,t)(b^2\sigma^2r^{-2\sigma-2}+\varepsilon^2)^\frac{s}{2}\right]\\
&\quad\cdot\left[\left(1+\frac{(p-2)\sigma^2}{\sigma^2+\varepsilon^2b^{-2}r^{2\sigma+2}}\right)\sigma(-\sigma-1)r^{-\sigma-2}+(n-1)\sigma r^{-\sigma-2}\right]\\
&\leq b\left[(b^2\sigma^2r^{-2\sigma-2}+\varepsilon^2)^\frac{q}{2}+a(x,t)(b^2\sigma^2r^{-2\sigma-2}+\varepsilon^2)^\frac{s}{2}\right]
\left(-\frac{1}{2}\sigma r^{-\sigma-2}\right)\\
&\leq-\frac{b}{2}\sigma r^{-\sigma-2}(b^2\sigma^2r^{-2\sigma-2}+\varepsilon^2)^\frac{q}{2}\\
&\leq\begin{cases}-\frac{b^{1+q}}{2}3^{-\sigma-2-q(\sigma+1)}\sigma^{1+q} &{ \text{if } q\geq 0},\\[2mm]
-\frac{b}{2}3^{-\sigma-2}(1+\sigma^2)^\frac{q}{2}\sigma &{\text{if } -1<q<0}.
\end{cases}
\end{align*}
Then we take $b<1$ satisfying
$$
b\left(2^\sigma-\frac{1}{|1+\delta|^\sigma}\right)=\sqrt{\frac{\delta}{2}}.
$$
Thanks to $g_y(y)=0$ and $h_y(y)>0$, the function
\begin{equation*}
W_y(x)=\begin{cases} h_y(x)   &{ \text{if } x\in \overline{B_1}, |x-2y|\geq 1+\delta},\\[2mm]
\min\{g_y(x),h_y(x)\}   &{ \text{if } x\in \overline{B_1}, |x-2y|\leq 1+\delta}
\end{cases}
\end{equation*}
consists with $g_y$ in a neighborhood of $y$, and, by the selection of $b$, consists with $h_y$ when $x\in \overline{B_1}$ and $|x-2y|\geq 1+\tilde{\delta}$ for some $\tilde{\delta}\in(0,\delta)$. Furthermore,
$$
F_\varepsilon(x,t,D W_y,D^2W_y)\leq-C
$$
for $x\in B_1$ and $t\in[-1,0]$, where $C>0$ depends only upon $n,p$ and $q$. We conclude this proof through multiplying a large positive constant to $W_y$.
\end{proof}

\begin{lemma}
\label{lem6-2}
For each $(y,\tau)\in \partial_pQ_1$, there is $W_{y,\tau}\in C(\overline{Q_1})$ satisfying $W_{y,\tau}(y,\tau)=0$, $W_{y,\tau}>0$ in $\overline{Q_1}\setminus\{(y,\tau)\}$ as well as
$$
\partial_tW_{y,\tau}-F_\varepsilon(x,t,D W_{y,\tau},D^2W_{y,\tau})\geq1 \quad\text{in } Q_1.
$$
\end{lemma}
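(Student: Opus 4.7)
The plan is to split $\partial_pQ_1$ into its lateral part $\partial B_1\times[-1,0]$ and its bottom $\overline{B_1}\times\{-1\}$, and treat each separately. On the lateral part the spatial barrier $W_y$ from Lemma \ref{lem6-1} is available (since $y\in\partial B_1$), while on the bottom $y$ may lie in the interior of $B_1$, where $W_y$ is not defined, so a purely quadratic spatial piece must be used instead. In both cases the candidate $W_{y,\tau}$ is a sum of a spatial term vanishing at $y$ and a time penalty vanishing at $\tau$.

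For the bottom case $\tau=-1$, I take $W_{y,-1}(x,t):=|x-y|^2+\alpha(t+1)$, which is continuous on $\overline{Q_1}$, equal to zero only at $(y,-1)$, and strictly positive elsewhere because both summands are nonnegative on $\overline{Q_1}$. A direct computation with $|DW|\le 4$, $\|D^2W\|=2$, and $0\le a\le a^+$ yields an upper bound $F_\varepsilon(x,t,DW,D^2W)\le M$ with $M$ depending only on $n,p,q,s,a^+$; the choice $\alpha=M+1$ then enforces $\partial_tW_{y,-1}-F_\varepsilon\ge 1$. For the lateral case $y\in\partial B_1$ and $\tau\in[-1,0]$, I set $W_{y,\tau}(x,t):=W_y(x)+(t-\tau)^2$, with $W_y$ from Lemma \ref{lem6-1}. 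Zero/positivity of $W_{y,\tau}$ are inherited from those of $W_y$, and since $DW_{y,\tau}=DW_y$ and $D^2W_{y,\tau}=D^2W_y$,
\[
\partial_t W_{y,\tau}-F_\varepsilon(x,t,DW_{y,\tau},D^2W_{y,\tau})=2(t-\tau)-F_\varepsilon(x,t,DW_y,D^2W_y).
\]
Since $t-\tau\in[-1,1]$ on $\overline{Q_1}$, this is $\ge 1$ provided $-F_\varepsilon(x,t,DW_y,D^2W_y)\ge 3$.

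The main obstacle is therefore the preliminary step of upgrading the bound $F_\varepsilon(x,t,DW_y,D^2W_y)\le -1$ of Lemma \ref{lem6-1} to $\le -3$ (or any prescribed negative constant). This is accomplished by replacing $W_y$ with $\lambda W_y$ for $\lambda$ sufficiently large: the matrix $\delta_{ij}+(p-2)u_iu_j/(|Du|^2+\varepsilon^2)$ has eigenvalues trapped in $[\min\{1,p-1\},\max\{1,p-1\}]$ and so does not rescale, while the scalar coefficient $[(|Du|^2+\varepsilon^2)^{q/2}+a(|Du|^2+\varepsilon^2)^{s/2}]$ scales essentially as $\lambda^{q}$ plus $\lambda^{s}$ and the Hessian scales linearly in $\lambda$. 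Because the hypothesis $-1<q\le s$ forces $q+1>0$ and $s+1>0$, the net outcome is that $F_\varepsilon(x,t,D(\lambda W_y),D^2(\lambda W_y))$ tends to $-\infty$ as $\lambda\to\infty$; this is precisely the room left by the ``multiplying a large positive constant to $W_y$'' step at the end of the proof of Lemma \ref{lem6-1}, redone here for a larger target constant. Once this rescaled $W_y$ is in hand the two cases combine to give a barrier at every point of $\partial_pQ_1$.
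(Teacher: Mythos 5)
Your lateral construction matches the paper's (the paper takes $W_{y,\tau}=\tfrac{(t-\tau)^2}{2}+2W_y$, you take $W_y+(t-\tau)^2$), and you are right that Lemma \ref{lem6-1} as stated only gives $F_\varepsilon(W_y)\le-1$, so an extra scaling argument is needed to reach the larger negative target; your observation that $F_\varepsilon(\lambda W_y)$ decomposes as $\lambda^{1+q}$ times a $q$--term plus $a\lambda^{1+s}$ times an $s$--term, each nonpositive (since the construction makes the trace $\mathrm{tr}(A^\varepsilon(DW_y)D^2W_y)$ negative), and that $1+q,1+s>0$, is exactly the right justification. That half of the proof is fine and in fact slightly more explicit than the paper's.

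The bottom case is where the proposal breaks down. Taking $W_{y,-1}(x,t)=|x-y|^2+\alpha(t+1)$, you have $DW=2(x-y)$ and $D^2W=2I$, so
$$
F_\varepsilon(x,t,DW,D^2W)=\bigl[(4|x-y|^2+\varepsilon^2)^{q/2}+a(x,t)(4|x-y|^2+\varepsilon^2)^{s/2}\bigr]\cdot 2\Bigl(n+(p-2)\tfrac{4|x-y|^2}{4|x-y|^2+\varepsilon^2}\Bigr),
$$
and since $D^2W$ is positive definite the trace factor is bounded \emph{below} by $2n\min\{1,p-1\}>0$, so $F_\varepsilon\ge 0$ everywhere. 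When $-1<q<0$ (or $-1<s<0$) and $x$ is close to $y$, the scalar factor behaves like $\varepsilon^q$ (or $\varepsilon^s$), which is unbounded as $\varepsilon\to 0$. Hence your constant $M$, and therefore $\alpha=M+1$, necessarily depends on $\varepsilon$. A barrier whose size blows up as $\varepsilon\to 0$ cannot yield the $\varepsilon$-uniform modulus of continuity required in Corollary \ref{cor6-3} and Proposition \ref{pro3-7}, which is the whole point of the lemma. The paper avoids this by replacing $|x-y|^2$ with $|x-y|^\iota$ where $\iota=\max\{2,\tfrac{q+2}{q+1},\tfrac{s+2}{s+1}\}$; with $|DW|\sim|x-y|^{\iota-1}$ and $\|D^2W\|\sim|x-y|^{\iota-2}$, the product $|DW|^q\|D^2W\|\sim|x-y|^{q(\iota-1)+\iota-2}$ stays bounded (indeed tends to $0$) near $x=y$ precisely because $\iota\ge\frac{q+2}{q+1}$, and similarly for $s$. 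Your argument as written works only in the degenerate range $0\le q\le s$, where $\iota=2$; for the singular range allowed by \eqref{1-2} you need the larger exponent.
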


\begin{proof}
For $\tau\geq-1$ and $y\in\partial B_1$, we can construct
$$
W_{y,\tau}(x,t)=\frac{(t-\tau)^2}{2}+2W_y(x),
$$
which is a desired function apparently. Here $W_y(x)$ comes from Lemma \ref{lem6-1}. If $\tau=-1$ and $y\in B_1$, define
$$
W_{y,\tau}(x,t)=B(t+1)+|x-y|^\iota
$$
with
$$
\iota=\max\left\{2,\frac{q+2}{q+1},\frac{s+2}{s+1}\right\}.
$$
By virtue of the definition of $\iota$, we now evaluate
\begin{align*}
&\quad\partial_tW_{y,\tau}-F_\varepsilon(x,t,D W_{y,\tau},D^2W_{y,\tau})\\
&=B-\left[(\iota^2|x-y|^{2(\iota-1)}+\varepsilon^2)^\frac{q}{2}+a(x,t)(\iota^2|x-y|^{2(\iota-1)}+\varepsilon^2)^\frac{s}{2}\right]\\
&\quad\cdot\left((n-1)\iota+\iota(\iota-1)+(p-2)
\frac{\iota^2|x-y|^{2(\iota-1)}}{\iota^2|x-y|^{2(\iota-1)}+\varepsilon^2}\iota(\iota-1)\right)|x-y|^{\iota-2}\\
&\geq B-\left[(\iota^2|x-y|^{2(\iota-1)}+\varepsilon^2)^\frac{q}{2}+a^+(\iota^2|x-y|^{2(\iota-1)}+\varepsilon^2)^\frac{s}{2}\right]
\iota(n-1+p(\iota-1))|x-y|^{\iota-2}\\
&\geq\begin{cases}B-\big[(16+1)^\frac{q}{2}+a^+(16+1)^\frac{s}{2}\big]2(n-1+p) &{ \text{if } q\geq 0},\\[2mm]
B-\big[\iota^q|x-y|^{q(\iota-1)+\iota-2}+a^+\iota^s|x-y|^{s(\iota-1)+\iota-2}\big]\iota(n-1+p(\iota-1)) &{ \text{if } -1<s<0},\\[2mm]
B-\big[\iota^q|x-y|^{q(\iota-1)+\iota-2}+a^+(\iota^22^{2(\iota-1)}+1)^\frac{s}{2}\big]\iota(n-1+p(\iota-1)) &{{} \text{others}},
\end{cases}
\end{align*}
which leads to
$$
\partial_tW_{y,\tau}-F_\varepsilon(x,t,D W_{y,\tau},D^2W_{y,\tau})\geq B-C,
$$
where $C>0$ depends on $n,p,q,s$ and $a^+$. From that, we can see that $W_{y,\tau}$ will be a desired function as well, if we pick $B=C+1$.
\end{proof}

Recall that $a\vee b = \max\{a, b\}$ and $a\wedge b = \min\{a, b\}$ with $a$ and $b$ being two real numbers. With Lemma \ref{lem6-2} in hand, we can deduce the following result. The proof is the same as that of Theorem A.3 in \cite{IJS19}, so we omit it here.

\begin{corollary}
\label{cor6-3}
Suppose that $u\in C(\overline{Q_1})$ is a solution to \eqref{2-1} with $\varepsilon\in(0,1)$. Let $\varphi:=u\mid_{\partial_pQ_1}$ and $\rho$ be a modulus of continuity of $\varphi$. Then there is another modulus of continuity $\tilde{\rho}$, which depends on $n,p,q,s,a^+$ and $\rho$, such that, for any $(x,t)\in\overline{Q_1}$ and $(y,s)\in\partial_pQ_1$,
$$
|u(x,t)-u(y,s)|\leq \tilde{\rho}(|x-y|\vee\sqrt{|t-s|})
$$
holds true.
\end{corollary}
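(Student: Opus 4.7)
The plan is to upgrade Corollary \ref{cor6-3}, which controls $|u(x,t)-u(y,s)|$ only when one endpoint lies on $\partial_p Q_1$, to a genuine two-sided estimate for arbitrary pairs in $\overline{Q_1}$. The bridge is a localization via the comparison principle Proposition \ref{pro2-4}.

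First I would fix an interior reference point $(y_0,s_0)\in Q_1$ and let $r_0$ be its parabolic distance to $\partial_p Q_1$. For $(x,t)\in\overline{Q_1}$, set $d:=|x-y_0|\vee\sqrt{|t-s_0|}$. When $d\gtrsim r_0$, one projects $(y_0,s_0)$ to a boundary point $(z,\tau)\in\partial_p Q_1$ at parabolic distance $\leq r_0$ and applies Corollary \ref{cor6-3} twice, once to the pair $((x,t),(z,\tau))$ and once to $((y_0,s_0),(z,\tau))$. The triangle inequality then yields $|u(x,t)-u(y_0,s_0)|\leq 2\tilde\rho(Cd)$ for an absolute constant $C$.

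When $d\ll r_0$, the delicate case is to control the oscillation on the full subcylinder $Q:=Q_{r_0/2}(y_0,s_0)\subset Q_1$. By the previous case, $u$ already has a modulus of continuity $\hat\rho$ (built from $\tilde\rho$) on the parabolic boundary $\partial_p Q$. I would then mimic the barrier construction of Lemma \ref{lem6-2}, rescaled to $Q$: for each $(y',\tau')\in\partial_p Q$, build smooth supersolutions $W^Q_{y',\tau'}$ of \eqref{2-1} on $Q$ that vanish at $(y',\tau')$, remain positive elsewhere, and satisfy $\partial_t W^Q_{y',\tau'}-F_\varepsilon(x,t,DW^Q_{y',\tau'},D^2W^Q_{y',\tau'})\geq 1$. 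Combining with the boundary modulus $\hat\rho$, the upper barrier
$$
w^+(x,t)=u(y_0,s_0)+\sup_{(y',\tau')\in\partial_pQ}\bigl[\hat\rho(|y_0-y'|\vee\sqrt{|s_0-\tau'|})+\lambda W^Q_{y',\tau'}(x,t)\bigr]
$$
with $\lambda$ large yields $w^+\geq u$ on $\partial_pQ$ and $w^+$ is a supersolution on $Q$; Proposition \ref{pro2-4} gives $u\leq w^+$, and evaluation at points near $(y_0,s_0)$ produces the desired modulus in $d$. A symmetric construction delivers the matching lower bound.

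The principal obstacle is \emph{uniformity}: equation \eqref{2-1} fails to be scale-invariant under parabolic rescaling because of the mixed $(q,s)$-growth and the $(x,t)$-dependence of $a$. Hence the rescaled barriers $W^Q_{y',\tau'}$ must be engineered so that the threshold constants and the final modulus $\rho^*$ depend only on $n,p,q,s,a^-,a^+,\rho$ and $\|\varphi\|_{L^\infty(\partial_pQ_1)}$, and in particular remain valid as $r_0\downarrow 0$. The computations of Lemma \ref{lem6-2} involving the powers $r^{-\sigma}$ with $\sigma=2n/\min\{1,p-1\}+2$ should transfer after rescaling, but the $a(x,t)$-coefficient and the exponents $-1<q\leq s<\infty$ force case-splits on the signs of $q$ and $s$ analogous to those in Lemma \ref{lem3-2}. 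Once these uniform barriers are secured, Proposition \ref{pro2-4} closes the argument and produces the claimed modulus $\rho^*$.
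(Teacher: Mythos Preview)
You have misidentified the target. Corollary \ref{cor6-3} is precisely the \emph{one-endpoint-on-the-boundary} estimate: it bounds $|u(x,t)-u(y,s)|$ when $(y,s)\in\partial_pQ_1$ and $(x,t)\in\overline{Q_1}$ is arbitrary. Your proposal takes this statement as an input and tries to upgrade it to the full two-sided estimate for arbitrary pairs, which is Proposition \ref{pro3-7}, not Corollary \ref{cor6-3}. So what you wrote is not a proof of the statement at hand; it assumes it.

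The proof of Corollary \ref{cor6-3} (which the paper says follows \cite[Theorem A.3]{IJS19}) goes directly through the barriers of Lemma \ref{lem6-2}. Fix $(y,s)\in\partial_pQ_1$ and $\eta>0$. Since $W_{y,s}$ is continuous, vanishes only at $(y,s)$, and is strictly positive elsewhere on $\overline{Q_1}$, one can pick $\lambda=\lambda(\eta)$ large so that $\eta+\lambda W_{y,s}(x,t)\geq\rho(|x-y|\vee\sqrt{|t-s|})$ for all $(x,t)\in\partial_pQ_1$. Then $v^{\pm}(x,t)=\varphi(y,s)\pm(\eta+\lambda W_{y,s}(x,t))$ are, respectively, a supersolution and a subsolution of \eqref{2-1} in $Q_1$ (by Lemma \ref{lem6-2}) with $v^-\leq u\leq v^+$ on $\partial_pQ_1$; the comparison principle for the smooth, uniformly parabolic equation \eqref{2-1} with $\varepsilon>0$ gives the same ordering in $\overline{Q_1}$, and evaluating at $(x,t)$ yields $|u(x,t)-\varphi(y,s)|\leq\eta+\lambda(\eta)W_{y,s}(x,t)$. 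Optimizing over $\eta$ produces $\tilde\rho$. No rescaling to interior subcylinders is involved, and the uniformity difficulties you flag do not arise here.

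As a side remark, even read as an attempt at Proposition \ref{pro3-7}, your route differs from the paper's: there the two-sided modulus is obtained by combining Corollary \ref{cor6-3} with the interior Lipschitz and H\"older estimates of Lemmas \ref{lem2-1} and \ref{lem2-2}, following \cite[Proposition 2.5]{JS17}, rather than by rebuilding the barrier argument on shrinking subcylinders.
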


As a consequence, merging Lemmas \ref{lem2-1} and \ref{lem2-2} with Corollary \ref{cor6-3}, we could arrive at the boundary estimates (Proposition \ref{pro3-7}) by following the proof of Proposition 2.5 in \cite{JS17}. We omit the detailed proof.

\section*{Acknowledgements}

This work was supported by the National Natural Science Foundation of China (No. 12071098).



\end{document}